\DeclareMathAlphabet{\cmcal}{OMS}{cmsy}{m}{n}
\newtheoremstyle{thm}% name of the style to be used
  {3pt}% measure of space to leave above the theorem. E.g.: 3pt
  {3pt}% measure of space to leave below the theorem. E.g.: 3pt
  {\em}% name of font to use in the body of the theorem
  {0pt}% measure of space to indent
  {\bfseries}% name of head font
  {}% punctuation between head and body
  {5pt}% space after theorem head
  {}% Manually specify head
\newtheoremstyle{rem}% name of the style to be used
  {3pt}% measure of space to leave above the theorem. E.g.: 3pt
  {3pt}% measure of space to leave below the theorem. E.g.: 3pt
  {}% name of font to use in the body of the theorem
  {0pt}% measure of space to indent
  {\bfseries}% name of head font
  {.}% punctuation between head and body
  {5pt}% space after theorem head
  {}% Manually specify head
\newtheorem{thm}{Theorem}[section]
\newtheorem{cor}[thm]{Corollary}
\newtheorem{lem}[thm]{Lemma}
\newtheorem{prop}[thm]{Proposition}
\newtheorem{conj}[thm]{Conjecture}
\theoremstyle{definition}
\newtheorem{defn}[thm]{Definition}
\theoremstyle{rem}
\newtheorem{rem}[thm]{{Remark}}
\numberwithin{equation}{section} \numberwithin{table}{section}
\newtheorem*{thm*}{Theorem}
\newtheorem*{rem*}{Remark}
\newtheorem*{rems*}{Remarks}
\newtheorem*{exam*}{Example}
\newtheorem*{exams*}{Examples}
\newcommand{\neutralize}[1]{\expandafter\let\csname c@#1\endcsname\count@}
\def\bos#1{{\mathbf{#1}}}
 \newcommand{\G}{GL_2^{+}(\mathbb{Q})}
 \newcommand{\pa}[1]{\frac{\partial}{\partial{#1}}}
  \newcommand{\prt}[2]{\frac{\partial{#1}}{\partial{#2}}}
  \newcommand{\pr}{\operatorname{prim}}
  \newcommand{\frC}{\mathfrak{C}}
  \newcommand{\Des}{\operatorname{\mathfrak{Des}}}
  \newcommand{\nc}{\newcommand}
  \newcommand{\be}{\begin{eqnarray*}}
  \newcommand{\ee}{\end{eqnarray*}}
  \newcommand{\bea}{\begin{eqnarray}}
  \newcommand{\eea}{\end{eqnarray}}
   \nc{\bei}{\begin{itemize}}
   \nc{\eei}{\end{itemize}}
   \nc{\bee}{\begin{enumerate}}
   \nc{\eee}{\end{enumerate}}
   \nc{\bet}{\begin{thm}}
   \nc{\eet}{\end{thm}}
   \nc{\bed}{\begin{defn}}
   \nc{\eed}{\end{defn}}
   \nc{\bel}{\begin{lem}}
   \nc{\eel}{\end{lem}}
   \nc{\bep}{\begin{prop}}
   \nc{\eep}{\end{prop}}
   \nc{\bec}{\begin{corollary}}
   \nc{\eec}{\end{corollary}}
   \nc{\ber}{\begin{rem}}
   \nc{\eer}{\end{rem}}
   \nc{\beex}{\begin{example}}
   \nc{\eeex}{\end{example}}
   \nc{\bpm}{\begin{pmatrix}}
   \nc{\epm}{\end{pmatrix}}
   \nc{\bspm}{\left(\begin{smallmatrix}}
   \nc{\espm}{\end{smallmatrix}\right)}
\newcommand{\cA}{\mathcal{A}}
\newcommand{\cB}{\mathcal{B}}
\newcommand{\cC}{\mathcal{C}}
\newcommand{\cE}{\mathcal{E}}
\newcommand{\cH}{\mathcal{H}}
\newcommand{\cK}{\mathcal{K}}
\newcommand{\cO}{\mathcal{O}}
\newcommand{\bA}{\mathbb{A}}
\newcommand{\bC}{\mathbb{C}}
\newcommand{\bQ}{\mathbb{Q}}
\newcommand{\bZ}{\mathbb{Z}}
\newcommand{\BP}{\mathbf{P}}
\nc{\frf}{\mathfrak{f}} 
\newcommand{\frg}{\mathfrak g}
\nc{\frs}{\mathfrak{s}}  
\nc{\frt}{\mathfrak{t}} 
\nc{\fru}{\mathfrak{u}}
\nc{\lsl}{\mathfrak{sl}}
\nc{\lgl}{\mathfrak{gl}}
\nc{\upsi}{\underline{\psi}}
\nc{\uchi}{\underline{\chi}}
\DeclareMathOperator{\GL}{GL}
\DeclareMathOperator{\Res}{Res}
\DeclareMathOperator{\End}{End}
\newcommand{\lra}{\longrightarrow}    
\nc{\surjto}{\twoheadrightarrow}
\nc{\ts}{\times}
\nc{\ds}{\displaystyle}
\nc{\nd}{\noindent}  
\nc{\ud}{\underline}
\nc{\ov}{\overline}
\nc{\maplra}[1]{\buildrel #1 \over \lra}
\nc{\mapto}[1]{\buildrel #1 \over \to}
\nc{\setb}[1]{\{  #1\}}
 \nc{\cHom}{\mathcal{H}om}
\nc{\cdruur}[8] {\begin{CD} 
#1 @>#2>> #3\\ 
@AA#4A @AA#5A\\ 
#6 @>#7>> #8 
\end{CD} }
\nc{\cdrddr}[8] {\begin{CD} 
#1 @>#2>> #3\\ 
@VV#4V @VV#5V\\ 
#6 @>#7>> #8 
\end{CD} }
\nc{\dia}[8]{\xymatrix{ 
&#1 \ar@{-}[ld]_{#2} \ar@{-}[rd]^{#3} \\
#4 \ar@{-}[rd]_{#6} & &#5 \ar@{-}[ld]^{#7}\\ 
&#8} }
\nc{\diam}[9]{\xymatrix{ 
&#1 \ar@{-}[ld]_{#2}  \ar@{-}[d]^{#3} \ar@{-}[rd]^{#4} \\
#5 \ar@{-}[rd]_{#8}     & #6 \ar@{-}[d]_{#9}      & #7   \ar@{-}[ld]^{2} \\
& \bQ} } 
\nc{\sumn}[2][n]{#2_{1} +#2_{2}+ \cdots + #2_{#1}}
\nc{\poly}[3][n]{#2_{#1}#3^{#1} +#2_{#1-1}#3^{#1-1}  \cdots + #2_{1} #3+ #2_0}
\nc{\dpoly}[3][n]{#1#2_{#1}#3^{#1-1} +(#1-1)#2_{#1-1}#3^{#1-1}  \cdots +2 #2_{2} #3+ #2_1}
\nc{\mpoly}[3][n]{#3^{#1} +#2_{#1-1}#3^{#1-1}  \cdots + #2_{1} #3+ #2_0}
\nc{\vpar}[4]{    \left \{ \begin{array}{cc} #1 & \textrm{if } #2, \\
&\\
#3 & \textrm{if } #4. 
\end{array}\right. }
\nc{\ary}[5]{#1: \left\{ \begin{array}{ll} #2 &\mapsto #3 \\ #4 &\mapsto #5 \end{array} \right.}
 \nc{\bedm}{\begin{displaymath}}
 \nc{\eedm}{\end{displaymath}}
 \nc{\art}{\hbox{\bf Art}^\Z}
 \nc{\bvx}{\bos{B\!\!V}_{\! \!\ud G}}
\newcommand{\pmat}{\left(\begin{matrix}}   
\newcommand{\epmat}{\end{matrix}\right)}   
\newcommand{\psmat}{\left(\begin{smallmatrix}}    
\newcommand{\epsmat}{\end{smallmatrix}\right)}
\nc{\twotwo}[4]{\pmat #1 & #2 \\ #3 & #4 \epmat}
\nc{\thrthr}[9]{\pmat #1 & #2 & #3 \\ #4 & #5 & #6 \\ #7 & #8 & #9 \epmat}
\nc{\stwotwo}[4]{\psmat #1 & #2 \\ #3 & #4 \epsmat}
\nc{\sthrthr}[9]{\psmat #1 & #2 & #3 \\ #4 & #5 & #6 \\ #7 & #8 & #9 \epsmat}
\def\eqalign#1{\null\,\vcenter{\openup\jot\m@th
\ialign{\strut\hfil$\displaystyle{##}$&$\displaystyle{{}##}$\hfil
\crcr#1\crcr}}\,}
\def\eqn#1#2{
\xdef #1{(\nsecsym\the\meqno)}%\writedef{#1\leftbracket#1}%
\global\advance\meqno by1
$$#2\eqno#1\eqlabeL#1
$$}
\def\a{\alpha}
\def\b{\beta}
\def\d{\delta}  
\def\e{\varepsilon} 
\def\g{\gamma}  \def\G{\Gamma}
\def\l{\lambda}  
\def\m{\mu}
\def\n{\nu}
\def\r{\rho}
\def\o{\omega}  \def\O{\Omega}
\def\t{\tau}
\def\C{\mathbb{C}}
\def\Z{\mathbb{Z}}
\def\Q{\mathbb{Q}}
\def\mC{\mathfrak{C}}
\def\mL{\mathfrak{L}}
\def\rd{\partial}
\def\Ker{\hbox{Ker}\;}
\def\Im{\hbox{Im}\;}
\def\mod{\hbox{ }mod\hbox{ }}
\def\wt{\hbox{\it wt}}
\def\ch{\hbox{\it ch}}
\def\newsec#1{\global\advance\nsecno by1
\eqnres@t
\section{#1}}
\def\eqnres@t{\xdef\nsecsym{\the\nsecno.}\global\meqno=1}
\def\sequentialequations{\def\eqnres@t{\bigbreak}}\xdef\nsecsym{}
\def\draftmode{\message{ DRAFTMODE }
%\writelabels

{\count255=\time\divide\count255 by 60 \xdef\hourmin{\number\count255}
\multiply\count255 by-60\advance\count255 by\time
\xdef\hourmin{\hourmin:\ifnum\count255<10 0\fi\the\count255}}}
\def\nolabels{\def\wrlabeL##1{}\def\eqlabeL##1{}\def\reflabeL##1{}}
\def\writelabels{\def\wrlabeL##1{\leavevmode\vadjust{\rlap{\smash%
{\line{{\escapechar=` \hfill\rlap{\tt\hskip.03in\string##1}}}}}}}%
\def\eqlabeL##1{{\escapechar-1\rlap{\tt\hskip.05in\string##1}}}%
\def\reflabeL##1{\noexpand\llap{\noexpand\sevenrm\string\string\string##1}
}}
\def\eqn#1#2{
\xdef #1{(\nsecsym\the\meqno)}%\writedef{#1\leftbracket#1}%
\global\advance\meqno by1
$$#2\eqno#1\eqlabeL#1
$$}
\def\eqalign#1{\null\,\vcenter{\openup\jot\m@th
\ialign{\strut\hfil$\displaystyle{##}$&$\displaystyle{{}##}$\hfil
\crcr#1\crcr}}\,}
   \nc{\hr}{[\![\hbar]\!]}
\def\Fr#1#2{{#1\over#2}}
\nc{\bt}{\mathbf{t}}
\begin{document}

%\title[Deformations for period matrices via DGBV algebras]{Deformations for period matrices of smooth projective complete intersections via DGBV algebras}

\title[polynomial realizations and deformation of period integrals]{Polynomial realizations of period matrices of projective smooth complete intersections and their deformation}%\title[Deformations of period matrices and DGBV algebras]{Deformations of period matrices and DGBV algebras}

\author{Yesule Kim}
\email{yesule@snu.ac.kr}
\address{Department of Mathematical Sciences, Seoul National University GwanAkRo 1, Gwanak-Gu, Seoul 08826, South Korea. }
\author{Jeehoon Park}
\email{jeehoonpark@postech.ac.kr}
\address{Department of Mathematics, POSTECH (Pohang University of Science and Technology), San 31, Hyoja-Dong, Nam-Gu, Pohang, Gyeongbuk, 790-784, South Korea. }
\author{Junyeong Park}
\email{junyeongp@gmail.com}
\address{Department of Mathematics, POSTECH (Pohang University of Science and Technology), San 31, Hyoja-Dong, Nam-Gu, Pohang, Gyeongbuk, 790-784, South Korea. }
%\dedication{A dedication can be included here.}
\subjclass[2010]{ 14M10, 14D15 (primary), 14J70, 18G55, 13D10, 32G20 (secondary). }
%At least one subject code is required. Please refer to
%\url{http://www.ams.org/msc/} for a list of codes.

\keywords{DGBV algebras, deformations of periods, projective smooth complete intersections, $L_\infty$-homotopy theory.}
%\thanks{This file documents \pkg{compositio} version \Fileversion\ and was last revised \Filedate.}

\begin{abstract}
Let $X$ be a smooth complete intersection over $\bC$ of dimension $n-k$ in the projective space $\BP^n_{\bC}$, for given positive integers $n$ and $k$. 
For a given integral homology cycle $[\g] \in H_{n-k}(X(\bC),\bZ)$, the period integral is defined to be a linear map from the de Rham cohomology group to $\bC$ given by $[\o] \mapsto \int_\g \o$. The goal of this article is to interpret this period integral as a linear map from the polynomial ring with $n+k+1$ variables to $\bC$ and use this interpretation to develop a deformation theory of period integrals of $X$. The period matrix is an invariant defined by the period integrals of the \textit{rational} de Rham cohomology, which compares the \textit{rational} structures ($\bQ$-subspace structures) of the de Rham cohomology over $\bC$ and the singular homology with coefficient $\bC$.
As a main result, when $X'$ is another projective smooth complete intersection variety deformed from $X$, we provide an explicit formula for the period matrix of $X'$ in terms of the period matrix of $X$ and the Bell polynomials evaluated at the deformation data.
Our result can be thought of as a modern deformation theoretic treatment of the period integrals based on the Maurer-Cartan equation of a dgla (differential graded Lie algebra).

%The period integrals of $X$ compares a rational (or integral) structure of the singular homology of $X$ with a rational structure of the de Rham cohomology of $X$.
%with coefficients $\bC$ 
%and period matrices of $X$. 

\end{abstract}

\maketitle
\tableofcontents

%\vspace*{6pt}\tableofcontents  % for this guide only.
% A table of contents should normally not be included

\section{Introduction} \label{sec1}
%\label{sec:introduction}

%Let $\Bbbk$ be the complex field $\bC$.
The goal of this article is to give a modern deformation theoretic interpretation for the period integrals and period matrices of smooth projective complete intersection variety $X$ over the complex field $\bC$ via a dgla (differential graded Lie algebra) and a DGBV (differential Gerstenhaber-Batalin-Vilkovisky) algebra whose construction are based on a polynomial quotient description of the primitive middle-dimensional algebraic de Rham cohomology of $X$.
Let $n$ and $k$ be positive integers such that $n \geq k \geq 1$. 
Let $X_{\ud G}$ be a smooth complete intersection variety over $\bC$ of dimension $n-k$ embedded in the projective space $\BP^{n}=\BP^n_{\bC}$ over $\bC$.
%(with notation $X=X_{\ud G}(\bC)$)
We use $\underline x = [x_0: x_1: \cdots: x_n]$ as a projective coordinate of the projective $n$-space $\BP^n$ and let $G_1(\ud x), \cdots, G_k(\ud x)$ be defining homogeneous polynomials in $\bC[\ud x]$ such that $\deg(G_i)=d_i$ for $i = 1, \cdots, k$.
%Denote $X_{\ud G}(\bC)$ by $X$.
 We introduce more variables $y_1, \cdots, y_k$ corresponding to $G_1, \cdots, G_k$. Let $N=n+k+1$ and 
\begin{align}\label{fao}
A :=\bC[\ud q]= {\bC}[q_\m]_{\m=1,\cdots, N}
\end{align}
where $q_1=y_1, \cdots, q_k=y_k$ and $q_{k+1}=x_0, \cdots, q_N=x_{n}$.
We consider the Dwork potential
\begin{align}\label{dpot}
S(\ud q) := \sum_{\ell=1}^k y_\ell \cdot G_{\ell}(\underline{x}).%\in A_{0,(1)},
\end{align}
Then we will construct a linear map
\begin{eqnarray}\label{Jmap}
J=J_{\ud G}:A \to H^{n-k}_{dR}(X_{\ud G}; \bC)
\end{eqnarray}
 where  $H^{n-k}_{dR}(X_{\ud G}; \bC)$ is the middle-dimensional algebraic de Rham cohomology group of $X_{\ud G}/\bC$ (see subsection \ref{sec3.2}),\footnote{This map is due to Griffiths in the hypersurface case, \cite{Gr69},  Terasoma in the equal degree complete intersection case, \cite{Ter90}, and Konno in the general case, \cite{Ko91}.}
whose kernel is the subspace
\begin{eqnarray}\label{kker}
\cK_{\ud G}:=\bigoplus_{i=1}^N \left( \pa{q_i}+\prt{ S(\ud q) }  {q_i}\right) A  
\end{eqnarray}
and the image is isomorphic to the primitive middle-dimensional algebraic de Rham cohomology group $H^{n-k}_{dR, \pr}(X_{\ud G}; \bC) \subset H^{n-k}_{dR}(X_{\ud G};\bC)$. In other words, we have an induced isomorphism $\bar J_{\ud G} : A/\cK_{\ud G} \xrightarrow{\simeq} H^{n-k}_{dR, \pr}(X_{\ud G}; \bC)$.

For any homology class $[\g] \in H_{n-k}(X_{\ud G}(\C), \bZ)$, we consider the following period integral from the de Rham cohomology to $\bC$:
\begin{eqnarray}\label{ytr}
C^{\ud G}_{\g}: H^{n-k}_{dR}(X_{\ud G};\bC) \longrightarrow \C,\qquad [\varpi]\mapsto  \int_\g \varpi,
\end{eqnarray}
where $\varpi$ is a representative of a cohomology class $[\varpi]\in H^{n-k}_{dR}(X_{\ud G}; \bC)$. Then 
\begin{eqnarray}\label{polyreal}
\cC^{\ud G}_{\g}:=C^{\ud G}_{\g} \circ J_{\ud G} : A \to \bC
\end{eqnarray}
is a linear map from the polynomial ring $A$ to $\bC$, which we call \emph{a polynomial realization} of the period integral $\int_{\g}$. We will use the simplified notation $\cC_{\g}$ instead of $\cC^{\ud G}_{\g}$, if the context is clear.

We explain our main theorems regarding deformation formulas of period integrals and period matrices using the polynomial realization.
For this, let $X_{\ud U}\subset \BP^n$ be a smooth projective complete intersection over $\bC$ which is deformed
from $X=X_{\ud G}$ by homogeneous polynomials $\ud H = (H_1(\ud x),$ $ \cdots, 
H_k(\ud x))$, i.e. $\ud U =(U_1(\ud x), $ $\cdots, U_k(\ud x))$ with $U_i(\ud x)=G_i(\ud x) + H_i(\ud x)$ for each $i =1, \cdots, k$ are the defining equations
for $X_{\ud U}$.
Note that smooth projective complete intersections with fixed degrees $d_1, \cdots, d_k$ have same topological types and their singular (vanishing) homologies and singular (primitive) cohomologies are isomorphic (Corollary \ref{CI-diffeo-type}).
To make the context more clear, we make an assumption that our homology class $[\gamma]$ is supported in $X_{\ud G}(\bC)\cap X_{\ud U}(\bC)\subset \BP^n(\bC)$. In this case, we use Proposition \ref{CI-ambient-isotopy} to get a particular identification
\begin{align}\label{homology-identification}
\xymatrix{H_\bullet(X_{\ud G}(\bC),\mathbb{Z}) \ar[r]^-\sim & H_\bullet(X_{\ud U}(\bC),\mathbb{Z}) & \text{such that } \quad [\gamma] \ar@{|->}[r] & [\gamma]}
\end{align}
which we fix for the rest of the article. Using this identification, we can directly compare $\cC_\g^{\ud G}$ and $\cC_\g^{\ud U}$. The following theorem is motivated by Proposition \ref{klemma} (a modern deformation theoretic interpretation of period integrals) and the proof relies on Dimca's result on cohomology of complete intersections, \cite{Dimca95}.

\begin{thm} \label{firthm}
Let $\Gamma=\sum_{i=1}^k y_i H_i(\ud x)$. 
For any homogeneous polynomial $u \in A$, the power series\footnote{Here $e^\G=1 + \G + \frac{\G \cdot \G}{2} + \cdots $ and, for $x \in A$, we think of
$\cC_\g^{\ud G}(x \cdot e^\G)=\cC_\g^{\ud G}(x) + \cC_\g^{\ud G}(x\cdot \G) + \cC_\g^{\ud G}(x\cdot \frac{\G \cdot \G}{2})+\cdots $ as a formal expression.}
$$
\cC_\g^{\ud G} \left(u e^\Gamma\right):=\sum_{n=0}^{\infty}\cC_\g^{\ud G} \left(u \frac{\Gamma^n}{n !}\right)
$$
converges to the polynomial realization $\cC_\g^{\ud U}(u)$ of the period integral of $X_{\ud U}$.
\end{thm}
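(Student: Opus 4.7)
The plan is to exploit the identity $S_{\ud U} = S_{\ud G}+\Gamma$ at the level of Dwork potentials, which gives $e^{S_{\ud U}} = e^{S_{\ud G}}\cdot e^{\Gamma}$, and to interpret the polynomial realization $\cC_\g^{\ud G}$ as an $e^{S_{\ud G}}$-twisted period pairing via Proposition \ref{klemma}. Under this interpretation, the deformation of the Dwork potential from $S_{\ud G}$ to $S_{\ud U}$ corresponds on the polynomial side to the insertion of the formal multiplier $e^{\Gamma}$, yielding the claimed identity $\cC_\g^{\ud G}(u e^{\Gamma}) = \cC_\g^{\ud U}(u)$.

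The central algebraic input I would establish first is the Leibniz-type identity
\[
\Bigl(\frac{\partial}{\partial q_i} + \frac{\partial S_{\ud G}}{\partial q_i}\Bigr)\bigl(f\, e^{\Gamma}\bigr) \;=\; e^{\Gamma}\cdot \Bigl(\frac{\partial}{\partial q_i} + \frac{\partial S_{\ud U}}{\partial q_i}\Bigr)\!(f), \qquad f\in A,
\]
which is valid formally in the $\Gamma$-adic completion of $A$ and follows from a one-line computation using $\partial_{q_i}(S_{\ud U}-S_{\ud G}) = \partial_{q_i}\Gamma$. Expanded in powers of $\Gamma$, this says that $e^{\Gamma}\cdot\cK_{\ud U}$ lies in the formal completion of $\cK_{\ud G}$, so the formal map $u\mapsto u e^{\Gamma}$ descends to a well-defined map from $A/\cK_{\ud U}$ into the completion of $A/\cK_{\ud G}$.

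For convergence of $\sum_{n\ge 0}\cC_\g^{\ud G}\bigl(u\,\Gamma^n/n!\bigr)$ in $\bC$, I would invoke the finite-dimensionality of $A/\cK_{\ud G}\xrightarrow{\,\sim\,}H^{n-k}_{dR,\pr}(X_{\ud G};\bC)$, which is Dimca's result cited in the theorem. Once reduced modulo $\cK_{\ud G}$, the classes $[u\,\Gamma^n/n!]$ lie in this fixed finite-dimensional space, and the $1/n!$ factor dominates the polynomial-in-$n$ growth of the coefficients of $u\,\Gamma^n$, giving absolute convergence. To identify the sum with $\cC_\g^{\ud U}(u)$, I would apply the polynomial-realization framework of Proposition \ref{klemma} term by term: the Leibniz identity above, together with the homology identification \eqref{homology-identification} furnished by Proposition \ref{CI-ambient-isotopy}, expresses that after summing over $n$ the formal image of $u\, e^{\Gamma}$ under $J_{\ud G}$ corresponds to the image of $u$ under $J_{\ud U}$, and that the period pairing $\int_{\g}$ is compatible with this correspondence.

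The main obstacle will be converting the formal intertwining into a rigorous identity of complex numbers: the multiplier $e^{\Gamma}$ lies only in the formal completion of $A$, not in $A$ itself, so the inclusion $e^{\Gamma}\!\cdot\!\cK_{\ud U}\subseteq\widehat{\cK_{\ud G}}$ must be brought down to a finite-dimensional cohomological statement before integration against $[\g]$ can be performed. This is precisely where Dimca's description of the cohomology of complete intersections and the deformation-theoretic reformulation in Proposition \ref{klemma} are essential.
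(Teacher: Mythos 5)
Your proposal takes the purely formal, deformation-theoretic route through Proposition \ref{klemma}: intertwine $\cK_{\ud U}$ with $\cK_{\ud G}$ via multiplication by $e^{\Gamma}$, then argue convergence from finite-dimensionality. The paper explicitly warns that this is not enough (``Proposition \ref{klemma} is not refined enough to prove Theorem \ref{firthm}''), and indeed there are two genuine gaps. First, the convergence argument does not work as stated: the coefficients you need to control are not the coefficients of the polynomial $u\,\Gamma^{n}$ (which do grow at most exponentially), but the coefficients that appear after reducing $u\,\Gamma^{n}/n!$ modulo $\cK_{\ud G}$ to a fixed finite-dimensional basis. The reduction operators $\partial_{q_i}+\partial_{q_i}S$ mix degrees (one term lowers degree, the other raises it), so reducing a polynomial of degree growing linearly in $n$ requires iterating these relations many times, and the resulting coefficients can a priori grow factorially; no estimate is offered, and ``$1/n!$ dominates polynomial growth'' is exactly the point that needs proof. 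Second, even granting convergence, your intertwining identity only shows that the functional $u\mapsto\sum_{n}\cC^{\ud G}_{\g}(u\,\Gamma^{n}/n!)$ annihilates $\cK_{\ud U}$ and hence factors through $A/\cK_{\ud U}$; it does not identify this functional with $\int_{\g}J_{\ud U}(\cdot)$ under the homology identification \eqref{homology-identification}. There are many functionals on $A/\cK_{\ud U}$, and the homology identification by itself supplies no mechanism for picking out the right one.

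The paper's proof supplies precisely the missing analytic bridge, and it is an ingredient your proposal never invokes: the Laplace transform. Using Proposition \ref{pone} together with Dimca's Proposition \ref{dc} (which reroutes $s_{\ud G}^{*}\circ\bar\varphi_S$ through $H^{n}_{dR}(\BP^{n}\setminus D;\bC)$, $D:G_1\cdots G_k=0$), one writes
$\cC^{\ud G}_{\g}(\ud y^{\ud i}u) = \int_{\g}\Res_{\ud G}\bigl(\delta_{\ud G}(\beta(S,\ud y^{\ud i}u))\bigr)$,
where $\beta(S,\ud y^{\ud i}u)$ is an explicit rational form with denominator $G_1^{i_1+1}\cdots G_k^{i_k+1}$, and likewise $\cC^{\ud U}_{\g}(u)$ with denominators built from the $U_i=G_i+H_i$. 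The theorem then reduces to a concrete statement: the series $\sum_m \beta(S,\tfrac{\Gamma^m}{m!}\ud y^{\ud i}u)$, obtained by applying the Laplace transform to $e^{S+\Gamma}=e^{S}e^{\Gamma}$ term by term, converges to $\beta(S+\Gamma,\ud y^{\ud i}u)$, i.e.\ to the expansion of $1/U_j^{\,\ast}$ in powers of $H_j$, which can then be integrated over the fixed compact cycle. This is what simultaneously settles convergence and identifies the limit as the period of $X_{\ud U}$; without this (or an equivalent explicit cochain-level comparison), your argument remains a formal statement about completions and does not yield the identity of complex numbers claimed in the theorem.
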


The next goal is to use the deformation formula in Theorem \ref{firthm} to express $\cC_\g^{\ud U}(u)$ as 
a homotopy Lie formula (also called, an $L_\infty$-homotopy formula) with the Bell polynomials evaluated at the deformation data.
For any linear map $\cC:A\to \bC$, we define a multi-linear map $\phi_m^{\cC}: S^m(A) \to \bC$ for each $m \geq 1$, where $S^m(A)$ is the symmetric $m$-th tensor product of $A$, as follows:
define $\phi_1^{\cC}= {\cC}$ and 
\be
\phi_m^{\cC}(x_1, \cdots, x_m) = \phi_{m-1}^{\cC} (x_1, \cdots, x_{m-2}, x_{m-1}\cdot x_m) -
\sum_{\substack{\pi \in P(m), |\pi|=2 \\ m-1 \nsim_\pi m}} \phi^{\cC} (x_{B_1})\cdot \phi^{\cC}(x_{B_2}), \quad m \geq 2,
\ee
where $P(m)$ is the set of partitions of $\{1, \cdots, m\}$ and we refer to Definition \ref{partition} for details.
We will explain later (subsection \ref{sec3.3}) that $\ud \phi^{\cC}=\phi^{\cC}_1, \phi^{\cC}_2, \cdots $ is an example of an $L_\infty$- morphism between certain $L_\infty$-algebras.
%We will explain later (subsection \ref{sub2.2}) a precise relationship between $\ud \phi^{\cC}=\phi^{\cC}_1, \phi^{\cC}_2, \cdots $ and an $L_\infty$-homotopy morphism.
The \emph{Bell polynomials} $B_n(x_1,\cdots,x_n)$ are defined by the power series expansion
\begin{align}\label{Bpoly}
\exp\left(\sum_{i\geq1}x_i\frac{t^i}{i!}\right)=1+\sum_{n\geq1}B_n(x_1,\cdots,x_n)\frac{t^n}{n!}.
\end{align}

\begin{thm}\label{sthm}
Let $\Gamma=\sum_{i=1}^k y_i H_i(\ud x)$. For any homogeneous polynomial $u \in A$, we have
\begin{align*}
\cC_\g^{\ud U}(u)=
\cC_\g^{\ud G}(u)+\sum_{m\geq1}\sum_{\substack{j+k=m \\ j,k\geq0}}\frac{1}{j!k!} B_j\left(\phi^{\cC_\g^{\ud G}}_1(\Gamma),\cdots,\phi^{\cC_\g^{\ud G}}_j(\Gamma, \cdots, \Gamma)\right)\cdot\phi^{\cC_\g^{\ud G}}_{k+1}(\Gamma, \cdots, \Gamma,u)
%dohyeong: . -> ,
\end{align*}
where $B_0=1$ and $\phi^{\cC_\g^{\ud G}}_1(\Gamma,u):=\phi^{\cC_\g^{\ud G}}_1(u)=\cC_\g^{\ud G}(u)$.
\end{thm}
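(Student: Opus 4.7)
My plan is to combine Theorem \ref{firthm} with a moment--cumulant identity expressing $\cC(x_1\cdots x_m)$ in terms of the multilinear maps $\phi^\cC_k$, and then recognize the combinatorial sum that arises as a product of Bell polynomials.

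\textbf{Step 1.} The key auxiliary identity is
\begin{equation}\label{mc-aux}
\cC(x_1 \cdots x_m) \;=\; \sum_{\pi \in P(m)} \prod_{B \in \pi} \phi^{\cC}_{|B|}(x_B), \qquad x_B = (x_i)_{i \in B},
\end{equation}
valid for every linear map $\cC : A \to \bC$, every $m\geq 1$, and every $x_1,\ldots,x_m \in A$. I would prove \eqref{mc-aux} by induction on $m$: the case $m=1$ is just $\phi^\cC_1 = \cC$, and the inductive step follows by rewriting the defining recursion of $\phi^\cC_m$ as
$$
\phi^\cC_{m-1}(x_1,\ldots,x_{m-2},x_{m-1}x_m) = \phi^\cC_m(x_1,\ldots,x_m) + \sum_{\substack{\pi \in P(m),\,|\pi|=2\\ m-1 \nsim_\pi m}} \phi^{\cC}(x_{B_1})\phi^{\cC}(x_{B_2}),
$$
and splitting the partitions appearing on the right-hand side of \eqref{mc-aux} according to whether $m-1 \sim_\pi m$ or not. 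Using the symmetry of each $\phi^\cC_k$ together with the inductive hypothesis, the two groups match the two terms in the displayed identity; this is the multilinear version of the classical moment--cumulant bijection.

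\textbf{Step 2.} By Theorem \ref{firthm}, $\cC_\g^{\ud U}(u) = \sum_{n\geq 0} \tfrac{1}{n!}\cC_\g^{\ud G}(u\Gamma^n)$. Applying \eqref{mc-aux} to the sequence $(u,\Gamma,\ldots,\Gamma)$ of length $n+1$, I sort the partitions $\pi \in P(n+1)$ by the block $B^*$ containing the distinguished index corresponding to $u$. If $|B^*| = k+1$, there are $\binom{n}{k}$ ways to choose which copies of $\Gamma$ land in $B^*$, and this block contributes $\phi^{\cC_\g^{\ud G}}_{k+1}(\Gamma,\ldots,\Gamma,u)$. The remaining $n-k$ copies of $\Gamma$ are partitioned by $\pi \setminus \{B^*\} \in P(n-k)$, and by the standard combinatorial characterization $B_n(x_1,\ldots,x_n) = \sum_{\pi\in P(n)} \prod_{B\in\pi} x_{|B|}$ (equivalent to the generating function \eqref{Bpoly} with $B_0=1$) this sum equals $B_{n-k}\bigl(\phi^{\cC_\g^{\ud G}}_1(\Gamma),\ldots,\phi^{\cC_\g^{\ud G}}_{n-k}(\Gamma,\ldots,\Gamma)\bigr)$. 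Dividing by $n!$, absorbing the binomial as $1/(k!(n-k)!)$, and reindexing by $j := n-k$ produces
$$
\cC_\g^{\ud U}(u) = \sum_{j,\,k \geq 0} \frac{1}{j!\,k!}\, B_j\bigl(\phi^{\cC_\g^{\ud G}}_1(\Gamma),\ldots,\phi^{\cC_\g^{\ud G}}_j(\Gamma,\ldots,\Gamma)\bigr)\,\phi^{\cC_\g^{\ud G}}_{k+1}(\Gamma,\ldots,\Gamma,u),
$$
and extracting the $j=k=0$ summand $\cC_\g^{\ud G}(u)$ yields the stated formula.

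\textbf{Main obstacle.} The only nontrivial point is the induction in Step 1. The defining recursion of $\phi^\cC_m$ merges only the last two entries, so the bookkeeping must exploit the symmetry of each $\phi^\cC_m$ to handle partitions whose blocks can contain arbitrary subsets of $\{1,\ldots,m\}$. Once \eqref{mc-aux} is established, the remainder is routine combinatorial reorganization together with the generating-function description of the Bell polynomials.
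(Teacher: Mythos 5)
Your proposal is correct, but it reaches the formula by a different route than the paper. The paper's proof is generating-functional: after invoking Theorem \ref{firthm} it applies Lemma \ref{proofofsec} (imported from [PP16, Lemma 3.3]) to write $\cC_\g^{\ud G}(u\,e^{\Gamma})=\Phi^{\cC}_{\Gamma}(u)\cdot e^{\Phi^{\cC}(\Gamma)}$ with $\cC=\cC_\g^{\ud G}$, and then expands $e^{\Phi^{\cC}(\Gamma)}$ directly by the defining generating function \eqref{Bpoly} of the Bell polynomials, the grading by the number of copies of $\Gamma$ playing the role of $t$. You instead work coefficientwise: your Step 1 partition identity $\cC(x_1\cdots x_m)=\sum_{\pi\in P(m)}\prod_{B\in\pi}\phi^{\cC}_{|B|}(x_B)$ is precisely what one gets from the paper's Lemma \ref{proofofsec} by polarizing (take $\Gamma=t_1x_1+\cdots+t_mx_m$ and extract the coefficient of $t_1\cdots t_m$), and your Step 2 replaces the product of two exponential series by a count of set partitions of $(u,\Gamma,\dots,\Gamma)$ sorted by the block containing $u$, together with the combinatorial description $B_n(x_1,\dots,x_n)=\sum_{\pi\in P(n)}\prod_{B\in\pi}x_{|B|}$ of the complete Bell polynomials. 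Your induction in Step 1 does close up: splitting the partitions of $[m]$ according to whether $m-1\sim_\pi m$ matches exactly the main term and the two-block correction in the defining recursion (a pair consisting of a partition of $[m-1]$ and a two-block splitting of the block containing the merged element, separating $m-1$ from $m$, is in bijection with the partitions of $[m]$ in which $m-1\nsim m$), and since $\Gamma$ and $u$ have degree $0$ the symmetry of $\phi^{\cC}_m$ (which comes with the descendant functor, $\phi^{\cC}_m:S^m(A)\to\bC$) is available wherever you need to reorder slots, e.g.\ to write $\phi^{\cC}_{k+1}(\Gamma,\dots,\Gamma,u)$ with $u$ last. What the two approaches buy: yours is self-contained at the cost of proving two standard combinatorial facts (the moment--cumulant identity and the exponential-formula characterization of $B_n$, which you should at least indicate is equivalent to \eqref{Bpoly}); the paper's is shorter because it cites the ready-made $L_\infty$-morphism identities, and its grouping of terms by total $\Gamma$-degree $m=j+k$ is the same regrouping you perform, so convergence is inherited termwise from Theorem \ref{firthm} in both arguments.
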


The last goal is to provide a deformation formula of period matrices of $X_{\ud U}$ and $X_{\ud G}$ using Theorem \ref{firthm}. 
The period matrix of $X_{\ud G}$ is about a comparison between a $\bQ$-rational (or $\bZ$-integral) structure on $H_{dR}^{n-k}(X_{\ud G};\bC)$ and  a $\bQ$-rational (or $\bZ$-integral) structure on $H_{n-k}(X_{\ud G}(\bC), \bC)$. 
Sections 6 and 7, \cite{Gr69} are wonderful places to read about the period matrices of smooth projective hypersurfaces.
For a brief review of period matrices, we need to tell more about the map $J_{\ud G}: A \to H_{dR}^{n-k}(X_{\ud G};\bC)$. 
We introduce an additive grading on $A$:
\begin{eqnarray} \label{weight}
\wt(y_i)=1, \quad \text{for } i=1, \cdots, k, \quad \wt(x_{j}) = 0, \quad \text{for }  j=0, \cdots, n. %=n+k+1.
\end{eqnarray}
Let $A_{(w)}$ be the subspace of $A$ consisting of elements whose $\wt$ is $w$.
Consider the Hodge decomposition of $H^{n-k}_{dR}(X_{\ud G};\bC) $:
$$
H^{n-k}_{dR}(X_{\ud G};\bC) \simeq \bigoplus_{p+q=n-k} H_{dR}^{p,q}.
$$
Then it is known (Proposition \ref{mthm}  \textit{(b)}) that $J_{\ud G}$ sends $\bigoplus_{0\leq q \leq w} A_{(q)}/\cK_{\ud G} $ to $\bigoplus_{0\leq q \leq w} H_{dR}^{p,q}$ for every $w \leq n-k$.
We choose polynomials $e_1,\cdots, e_{\d_0}, e_{\d_0+1},\cdots, 
 e_{\d_1},\cdots,e_{\d_{n-(k-1)}+1},$ $\cdots,e_{\d_{n-k}}$
 in $\bQ[\ud q]$ such that $\{ e_i \mod  \cK_{\ud G} : i=1, \cdots, \d_{n-k}\}$ is a $\bQ$-basis of $A/\cK_{\ud G}=\bQ[\ud q]/\cK_{\ud G} $, the set
$\{J_{\ud G}(e_1),\cdots, J_{\ud G} (e_{\d_0})\}$ gives a basis for the subspace $H_{dR}^{n-k, 0}$,
and the set $\{J_{\ud G}(e_{\d_{q-1} +1}),\cdots, J_{\ud G}(e_{\d_{q}})\}$, $1\leq q \leq n-k$, gives a basis for the subspace 
$H_{dR}^{n-k- q, q}$. 

%Note that $H^{n-k}_{dR,\pr}(X_{\ud G};\bQ)$ is an example of a $\bQ$-rational structure on $H^{n-k}_{dR,\pr}(X_{\ud G} ; \bC)$.
Let $\cH$ be the image of $\bQ[\ud q]$ under the map $J_{\ud G}$ so that $\cH$ is a $\bQ$-vector subspace of $H^{n-k}_{dR,\pr}(X; \bC)$ satisfying $\cH \otimes_\bQ \bC \simeq H^{n-k}_{dR,\pr}(X; \bC)$.
Thus $\cH$ is an example of a $\bQ$-rational structure on $H^{n-k}_{dR,\pr}(X; \bC)$.
We use the $\bQ$-basis  
$\{\e_\a=J_{\ud G}(e_\a)\}_{\a \in I}$ of $\cH$ where $I=I_{0}\sqcup I_{1}\sqcup\cdots\sqcup I_{n-k}$ with the notation
$\{\e^{j}_{\a}\}_{\a\in I_j}=\{ \e_{\d_{j-1} +1},\cdots, \e_{\d_{j}}\}$.

Let $\{\g^{\ud G}_\a\}_{\a\in I}$ (respectively, $\{\g^{\ud U}_\a \}_{\a\in I}$) be a $\bZ$-basis of $H_{n-k}(X_{\ud G}(\bC), \bZ)_0$, the homology group of vanishing cycles of $X_{\ud G}(\bC)$ (respectively, $H_{n-k}(X_{\ud U}(\bC), \bZ)_0$). Then the isomorphism (\ref{homology-identification}) restricts to
\begin{align*}
\xymatrix{\iota: H_{n-k}(X_{\ud G}(\bC),\bZ)_0 \ar[r]^-\sim & H_{n-k}(X_{\ud U}(\bC), \bZ)_0}.
\end{align*}
Moreover,
$$
\dim_\bC H_{n-k}(X_{\ud G}(\bC),\bC)_0 = \dim_\bC H^{n-k}_{dR,\pr}(X_{\ud G};\bC) =|I|
$$
and there is a unique matrix $B=(B_{\b}^{\a})\in \GL_{|I|}(\bZ)$ corresponding to $\iota$ such that
\begin{eqnarray}\label{ibc}
\sum_{\a \in I} \g^{\ud G}_\a B_{\b}^{\a} = \g^{\ud U}_\b, \quad \b \in I.
\end{eqnarray}

We define the period matrix $\Omega(X_{\ud G})$ of $X_{\ud G}$ with respect to $\{e_\a \mod \cK_{\ud G}\}_{\a \in I}$ and $\{\g^{\ud G}_\a\}_{\a \in I}$:
\begin{align}\label{pma}
\Omega(X_{\ud G})_\b^\a := \int_{\g^{\ud G}_\a} J_{\ud G}(e_\b),
\quad  \a, \b \in I.
\end{align}
Let $I'$ be the set of indices $i$ such that $H_i(\ud x) \neq 0$.
We assume that the cardinality of $I$ is bigger than $|I'|$, and 
view $I'$ as a subset of $I$ (allowing a slight abuse of notation).
%(a slight abuse of) notation $I'=\{\a_{i_1}, \cdots, \a_{i_\ell}\}  \subseteq \{\a_1, \cdots, \a_k \}$ in order to view $I'$ as a subset of $I$.
The intersection pairing 
\begin{align*}
H_{n-k}(X_{\ud G}(\bC), \bZ)_0 \otimes H_{n-k}(X_{\ud G}(\bC), \bZ)_0 \to \bZ.
\end{align*}
and the basis $\{\g^{\ud G}_\a\}_{\a\in I}$ gives the intersection matrix  $Q[{\g^{\ud G}_\a}]$. 
%We use the similar notation $Q[{\g^{\ud U}_\a}]$ for $X_{\ud U}$.
%add a comment about admissible base change. and $\{t^a_{j}\}_{a \in I_{j}}= t^{\d_{j-1} +1},\cdots, t^{\d_{j}}$.
The period matrix \eqref{pma} is determined up to a substitution $\O(X_{\ud G})
\to M \cdot  \O(X_{\ud G}) \cdot \Lambda$ where $M \in \GL_{|I|}(\bQ)$ is a base change matrix for $\cH$ which preserves the increasing Hodge filtration (this has a lower triangular form: see (6.2), \cite{Gr69}) and $\Lambda \in \GL_{|I|}(\bZ)$ is a base change matrix\footnote{For more refined conditions on $\Lambda$, see sections 6 (odd dimensional hypersurfaces - a symplectic matrix) and 7 (even dimensional hypersurfaces - an orthogonal matrix), \cite{Gr69}.  One can put a similar condition on $\Lambda$ for $k > 1$ depending on the parity of the dimension $n-k$.}  for $H_{n-k}(X_{\ud G}(\bC), \bZ)_0$ satisfying $\Lambda \cdot Q[{\g^{\ud G}_\a}] \cdot \Lambda^T = Q[{\g^{\ud G}_\a}]$.

\begin{thm}\label{tthm}
%There are a decomposition $I=I' \sqcup J$ with $|I'|=\ell$ and 
If we assume \eqref{aind}, then
we can construct
 $\{ u_i \mod  \cK_{\ud U} : i=1, \cdots, \d_{n-k}\}$, which is a $\bQ$-basis of $\bQ[\ud q]/\cK_{\ud U}$,
and a power series $T^\rho(\ud t) \in \bQ[[ \ud t]]$ for each $\r\in I$ (where $\ud t=(t^\a)_{\a \in I}$ are formal variables)
 such that
%We can construct an algebraically computable 1-tensor $\{T^\a(\ud t) \} \in \C[[\ud t]]$ such that 
\eqn\drilike{
\Omega(X_{\ud {U}}) = D \cdot \Omega(X_{\ud G}) \cdot B
}
where 
$$
\Omega(X_{\ud U})_\b^\a := \int_{\g^{\ud U}_\a} J_{\ud U}(u_\b),
\quad  \a, \b \in I,
$$
$D$ is the $|I|\times |I|$ matrix whose $(\b, \r)$-entry is given by
$$
D_\b^\r =  \left( \frac{\rd}{\rd t^\b} T^{\rho}(\ud t)\right)
\Big|_{\substack{t^\a=1, \a \in I'\\ t^\a=0,  \a\in I\setminus I'}}, \quad \b, \r \in I,
$$
and $B$ is given in \eqref{ibc}.
\end{thm}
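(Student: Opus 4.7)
The identity \eqref{drilike} factors into the three matrices $D$, $\Omega(X_{\ud G})$, and $B$, and my plan is to produce each factor from one of the ingredients developed earlier in the paper: $D$ from the Bell polynomial formula of Theorem \ref{sthm}, $\Omega(X_{\ud G})$ from evaluation on the base variety, and $B$ from the homology change of basis \eqref{ibc}, with Theorem \ref{firthm} serving as the glue. Since Proposition \ref{CI-ambient-isotopy} lets us represent $\g^{\ud U}_\a$ by a cycle supported in $X_{\ud G}(\bC) \cap X_{\ud U}(\bC)$, Theorem \ref{firthm} gives
$$\Omega(X_{\ud U})_\b^\a \;=\; \cC^{\ud U}_{\g^{\ud U}_\a}(u_\b) \;=\; \cC^{\ud G}_{\g^{\ud U}_\a}\bigl(u_\b\, e^{\Gamma}\bigr),$$
and expanding $\g^{\ud U}_\a = \sum_{\r \in I} B_\a^\r\, \g^{\ud G}_\r$ according to \eqref{ibc} yields
$$\Omega(X_{\ud U})_\b^\a \;=\; \sum_{\r \in I} B_\a^\r \cdot \cC^{\ud G}_{\g^{\ud G}_\r}\bigl(u_\b\, e^{\Gamma}\bigr).$$
Thus it remains to construct a $\bQ$-basis $\{u_\b \bmod \cK_{\ud U}\}_{\b \in I}$ of $\bQ[\ud q]/\cK_{\ud U}$ and power series $T^\rho(\ud t) \in \bQ[[\ud t]]$ such that
$$u_\b \cdot e^{\Gamma} \;\equiv\; \sum_{\rho \in I} \Bigl(\tfrac{\partial}{\partial t^\b} T^\rho(\ud t)\Big|_{\ud t_0}\Bigr)\, e_\rho \pmod{\cK_{\ud G}}, \qquad \ud t_0 := \bigl(t^\a = 1\ \text{for}\ \a \in I';\ t^\a = 0\ \text{otherwise}\bigr),$$
for then $\cC^{\ud G}_{\g^{\ud G}_\r}(u_\b e^\Gamma) = \sum_\rho D_\b^\rho\, \Omega(X_{\ud G})_\rho^\r$ and the matrix identity \eqref{drilike} drops out by substitution.

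To produce $u_\b$ and $T^\rho$ I pass to a formal thickening. Introducing parameters $\ud t = (t^\a)_{\a \in I}$, I define a weight-$1$ formal potential $\Gamma(\ud t) := \sum_{\a \in I} t^\a\, e_\a^{(1)} \in A[[\ud t]]$, where the $e_\a^{(1)}$ are chosen weight-$1$ polynomial lifts of the basis vectors $e_\a$ arranged so that $\Gamma(\ud t)|_{\ud t_0} = \Gamma$ (this is possible since $\Gamma = \sum_i y_i H_i$ is itself of weight $1$, and, for $\a \in I'$, each summand matches one of the $e_\a^{(1)}$). The formal expansion
$$e^{\Gamma(\ud t)} \;\equiv\; \sum_{\rho \in I} T^\rho(\ud t)\, e_\rho \pmod{\cK_{\ud G}}$$
defines $T^\rho(\ud t)$ uniquely, and Theorem \ref{sthm} combined with the generating-series definition \eqref{Bpoly} of the Bell polynomials guarantees that each $T^\rho(\ud t)$ lies in $\bQ[[\ud t]]$ and is explicitly assembled from the $L_\infty$-morphism values $\phi_m^{\cC^{\ud G}_{\g^{\ud G}_\r}}(\Gamma(\ud t),\ldots,\Gamma(\ud t), e_\rho)$. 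Differentiating the displayed identity in $t^\b$ and specializing at $\ud t_0$ now dictates the choice $u_\b := \partial_{t^\b} \Gamma(\ud t)|_{\ud t_0} = e_\b^{(1)}$, possibly after a rational lower-triangular adjustment needed to turn $\{u_\b \bmod \cK_{\ud U}\}$ into a $\bQ$-basis of $\bQ[\ud q]/\cK_{\ud U}$.

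The main obstacle is precisely this last adjustment. The formal manipulations above give the matrix identity \eqref{drilike} automatically once the basis $\{u_\b\}$ and the series $\{T^\rho\}$ are in place, but one must still verify that $\{u_\b \bmod \cK_{\ud U}\}_{\b \in I}$ genuinely forms a $\bQ$-basis of $\bQ[\ud q]/\cK_{\ud U}$ of the correct dimension $\d_{n-k}$. This is where assumption \eqref{aind} enters: it imposes enough linear independence on the deformation data $\ud H$ to force the weight-$1$ lifts $e_\b^{(1)}$ to descend to linearly independent classes modulo $\cK_{\ud U}$, after which the dimension count afforded by Proposition \ref{mthm} and Corollary \ref{CI-diffeo-type} forces spanning. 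Once this is in hand, the Hodge-filtration compatibility of Proposition \ref{mthm} ensures that $D$ has the lower-triangular shape in the weight grading required of a legitimate cohomological change of basis, so that \eqref{drilike} is compatible with the period-matrix substitutions $\Omega \mapsto M \Omega \Lambda$ permitted by the theory.
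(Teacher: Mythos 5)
Your overall skeleton is the same as the paper's: define generating series $T^\rho(\ud t)$ by expanding a $\ud t$-family modulo $K_{\ud G}$-exact terms against the basis $\{e_\rho\}$, differentiate in $t^\b$, specialize at $t^\a=1$ on $I'$ and $0$ elsewhere to produce $u_\b e^\Gamma \equiv \sum_\rho D_\b^\rho e_\rho \bmod \cK_{\ud G}$, apply $\cC^{\ud G}_{\g^{\ud G}_\a}$ (which kills $\cK_{\ud G}$), and finish with Theorem \ref{firthm} and \eqref{ibc}. But the construction of the family has a genuine gap, rooted in the charge grading. The basis elements $e_\a$ lie in $A_{c_{\ud G}}$ (charge $c_{\ud G}$, weights spread over $0,\dots,n-k$), while $\Gamma=\sum_i y_iH_i$ has charge $0$ and weight $1$; so ``weight-$1$ polynomial lifts $e^{(1)}_\a$ of $e_\a$'' with $\Gamma(\ud t)|_{\ud t_0}=\Gamma$ cannot exist unless $c_{\ud G}=0$, and exponentiating charge-$c_{\ud G}$ elements produces terms outside the charge-$c_{\ud G}$ piece, where by Proposition \ref{chc} the cohomology (and hence any expansion in the $e_\rho$ modulo $\cK_{\ud G}$) lives. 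This is exactly why the paper introduces the auxiliary factors in \eqref{ualpha} ($u_{\a_i}=y_iH_i h(\ud x)$ or $y_iH_i y_j^m h(\ud x)$ when $c_{\ud G}\neq 0$) and, in \eqref{eaon}, exponentiates only the charge-zero part $\sum_i t^{\a_i}y_iH_i$ while multiplying by a charge-$c_{\ud G}$ prefactor $h(\ud x)+\sum_{\b\in I\setminus I'}t^\b u_\b$; one then checks case by case that $\partial_{t^\b}$ followed by the specialization still yields $u_\b e^\Gamma$ for every $\b\in I$. Your family does not achieve this outside the Calabi--Yau case $c_{\ud G}=0$.

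The second gap is the basis claim. Assumption \eqref{aind} only asserts independence modulo $\cK_{\ud U}$ of the $\ell=|I'|$ elements $u_{\a_i}$ built from the $H_i$; it says nothing about your remaining $e^{(1)}_\b$, so the assertion that \eqref{aind} ``forces'' all of them to descend to independent classes mod $\cK_{\ud U}$ is unsupported (and false in general). The paper's Lemma \ref{cb} instead simply extends the independent set $\{u_{\a_i}\}$ by a choice of further elements to a $\bQ$-basis of $\bQ[\ud q]/\cK_{\ud U}$, and only afterwards defines $T^\rho$ via \eqref{eaon} in terms of these chosen $u_\a$. Your proposed fix --- a ``rational lower-triangular adjustment'' of the $u_\b$ after the fact --- does not work as stated: the series $T^\rho(\ud t)$ were already pinned down by your exponential family, so changing the $u_\b$ afterwards destroys the identity $u_\b e^\Gamma\equiv\sum_\rho D_\b^\rho e_\rho \bmod \cK_{\ud G}$ on which the final matrix computation rests. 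The order of construction (first the basis $\{u_\a\}$ using \eqref{aind}, then the series, then differentiation) is essential. The closing remark about $D$ being lower triangular is not needed for \eqref{drilike} and is not established by what you wrote.
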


%Note that the matrix $D$ is not necessarilly the form of the base change matrix for $\cH$.
This theorem says that $\Omega(X_{\ud G})$ and $\Omega(X_{\ud U})$ are ``transcendental'' invariants but their relationship is ``algorithmically computable up to desired precision (the coefficients of higher powers of $t^\a$ in $\frac{\rd}{\rd t^\b} T^{\rho}(\ud t)$'': if we know the period matrix $\Omega(X)$ and the polynomials $G_{\ell}(\ud x), H_{\ell}(\ud x), \ell = 1, \cdots, k$, then there is an algebraic algorithm to compute (approximately) the period matrix $\Omega(X_{\ud U})$. 

\begin{rem}\label{ccc}
(a)
Theorem \ref{tthm} has an arithmetic application when $k=1$: in \cite{KKP17}, the first two authors and Kwang-Hyun Kim provided an explicit
algorithm to compute the inverse value of the modular $j$-function based on the case of elliptic curves of Theorem \ref{tthm}.

(b)
Theorem \ref{tthm} generalizes and strengthens the results for smooth projective hypersurfaces in \cite{PP16} to smooth projective complete intersections.
%(b)
%Theorem \ref{tthm} generalizes and strengthens (since we are dealing with the period matrices for homology cycles with integral coefficients rather than homology cycles with complex coefficients) the results for smooth projective hypersurfaces in \cite{PP16} to smooth projective complete intersections.

(c)
Theorem \ref{firthm} under \eqref{ibc} can be restated as follows: $ \sum_{\a \in I}{B^\a_\d}\cdot \cC_{\g^{\ud G}_\a} \left(u\cdot e^\Gamma\right)$\text{ converges to } $\cC_{\g^{\ud U}_\d} (u)$ for $\d \in I$ and $u \in A$.
\end{rem}

Now we briefly explain the contents of each section.
Section \ref{sec2} is about how to find a DGBV algebra $\bvx$ for $H_{dR,\pr}^{n-k}(X_{\ud G};\bC)$.
In subsection \ref{sec2.1}, we explain the geometric idea behind our homotopy Lie theory.
Subsection \ref{sec2.2} is devoted to associate a Lie algebra representation $\rho$ to $X_{\ud G}$
and construct a model for $\bvx$, a dual Chevalley-Eilenberg complex for $\rho$.
%In subsection \ref{sec2.3}, we define the increasing weight filtration.
In subsection \ref{sec2.4}, our construction of the DGBV algebra $\bvx$ attached to $X_{\ud G}$ is given. 
%We add a physical interpretation of the $0$-th cohomology of $\bvx$ in subsection \ref{sec2.5} and then we compute the cohomology of the DGBV algebra in subsection \ref{sec2.6}.

In section \ref{sec3}, we use the homotopy Lie theory in section \ref{sec2} to 
study deformations of period integrals and matrices of smooth projective complete intersections; we prove three main theorems.
In subsection \ref{sec3.1}, we explain the polynomial realization of the period integral. Then we give a proof of Theorem \ref{firthm} in subsection \ref{sec3.2}.
In subsection \ref{sec3.3}, we briefly review the formal deformation theory and the descendant functor, which is needed for Theorem \ref{sthm}.
Subsection \ref{sec3.4} (respectively, subsection \ref{sec3.5}) is devoted to the proof of Theorem \ref{sthm} (respectively, subsection \ref{tthm}).

Finally, we include Appendix \ref{apa} for homology cycles of projective smooth complete intersections.

%
%In subsection \ref{KT}, we resolve $(\cA_{X}, \cdot, K_{X})$ to construct an $L_\infty$-algebra which is quasi-isomorphic 
%to $(\H, 0)$, and an $L_\infty$-morphism which lifts the period integral.
%Finally, based on $L_\infty$-homotopy theory, 
%we explain the deformation of period integrals of Calabi-Yau complete intersections (subsection \ref{3.4}) and its connection
%to the Gauss-Manin connection (subsection \ref{3.5}). 

\subsection{Acknowledgement}
The work of Jeehoon Park was partially supported by BRL (Basic Research Lab) through the NRF (National Research Foundation) of South Korea (NRF-2018R1A4A1023590). 
The work of YK was partially supported by Basic Science Research Program through the National Research Foundation of Korea (NRF-2019R1C1C1008614)
and was partially supported by BK21 PLUS SNU Mathematical Sciences Division.
The authors would like to thank Jae-Suk Park for helping them initiating this project and providing useful comments and warm support. Jeehoon Park thanks KIAS (Korea Institute for Advanced Study), where the part of work was done, for its hospitality.

\section{A dgla associated to a complete intersection and its cohomology} \label{sec2}

In this section we explain how to associate a dgla (differential graded Lie algebra) to smooth projective complete intersection $X_{\ud G}$, whose $0$-th cohomology is isomorphic to the primitive middle dimensional cohomology of $X_{\ud G}$. In fact, we will do slightly more: We construct a DGBV (differential Gerstenhaber-Batalin-Vilkovisky) algebra which has more structures (in particular, the super-commutative algebra structure) than a dgla. This DGBV algebra will play a key role in the polynomial realizations of the period integrals and their deformation theory.

\subsection{Geometric idea behind the homotopy Lie theory} \label{sec2.1}
If we are interested in the cohomology of the smooth projective complete intersection variety $X_{\ud G}$ of dimension $n-k$, then the primitive middle dimensional cohomology
$H^{n-k}_{dR,\pr}(X_{\ud G};\bC)$ is the most interesting piece because the other degree cohomologies and non-primitive pieces can be easily described in terms of the cohomology of the projective space $\BP^n$ due to the weak Lefschetz theorem and the Poincare duality. 
For the computation of $H^{n-k}_{dR,\pr}(X_{\ud G};\bC)$, the Gysin sequence and ``the Cayley trick'' play important roles. There is a long exact sequence, called the Gysin sequence:
\begin{eqnarray*}
\cdots \to H^{n+k-1}_{dR}(\BP^n ; \bC) \to H^{n+k-1}_{dR}(\BP^n \setminus X_{\ud G};\bC) \xrightarrow{\Res_{\ud G}} H^{n-k}_{dR}(X_{\ud G};\bC) \xrightarrow{\operatorname{Gys}} H^{n+k}_{dR}(\BP^n;\bC)  \to \cdots
\end{eqnarray*}
where $\Res_{\ud G}$ is the residue map (see p.\,96 of \cite{Dim95}) and $\operatorname{Gys}$ is the Gysin map. This sequence gives rise to an isomorphism 
\begin{eqnarray}\label{residuemap}
\Res_{\ud G}:H^{n+k-1}_{dR}(\BP^n \setminus X_{\ud G}; \bC) \xrightarrow{\sim} H^{n-k}_{dR, \operatorname{prim}}(X_{\ud G};\bC).
\end{eqnarray}
The Cayley trick is about translating a computation of the cohomology of the complement of a complete intersection into a computation of the cohomology of the complement of a hypersurface in a bigger space. Let $\cE=\cO_{\BP^n}(d_1) \oplus \cdots \oplus \cO_{\BP^n}(d_k)$ be the locally free sheaf
of $\cO_{\BP^n}$-modules with rank $k$. Let $\BP(\cE)$ be the projective bundle associated to $\cE$ with fiber $\BP^{k-1}$ over $\BP^n$. Then $\BP(\cE)$ is the smooth projective toric variety with Picard group isomorphic to $\bZ^2$ whose (toric) homogeneous coordinate ring is given by
\begin{eqnarray} \label{fa}
A:=A_{\BP(\cE)}:=\bC[y_1, \cdots, y_k, x_0, \cdots, x_n]
\end{eqnarray}
where $y_1, \cdots, y_k$ are new variables corresponding to $G_1, \cdots, G_k$. This is the polynomial ring introduced in \eqref{fao}.
There are two additive gradings $\ch$ and $\wt$, called the charge and the weight\footnote{This grading was already introduced in \eqref{weight}}, corresponding to the Picard group $\bZ^2$: 
$$
\ch(y_i)=-d_i, \quad \text{for } i=1, \cdots, k, \quad \ch(x_{j}) = 1, \quad \text{for }  j=0, \cdots, n,
$$ 
$$
\wt(y_i)=1, \quad \text{for } i=1, \cdots, k, \quad \wt(x_{j}) = 0, \quad \text{for }  j=0, \cdots, n. %=n+k+1.
$$ 
Then 
$$S(\ud y, \ud x):= \sum_{j=1}^k y_j G_j(\ud x) \in {A} $$
defines a hypersurface $X_S$ in $\BP(\cE)$.
The natural projection map $pr_1:\BP(\cE) \to \BP^n$ induces a morphism 
$
\BP(\cE)\setminus X_S \to \BP^n \setminus X_{\ud G}
$
which can be checked to be a homotopy equivalence (the fibers are affine spaces). Hence there exists an isomorphism 
$$
H^{n+k-1}_{dR}(\BP(\cE) \setminus X_S; \bC) \xrightarrow{s^*} H^{n+k-1}_{dR}(\BP^n \setminus X_{\ud G}; \bC)
$$
where $s$ is a section to $pr_1$.
%Now a computational problem on $H^{n-k}_{\operatorname{prim}}(X,\bC)$ reduces to one on $H^{n+k-1}(\BP(\cE) \setminus X_S, \bC)$.
The cohomology group $H^{n+k-1}_{dR}(\BP(\cE) \setminus X_S; \bC)$ of a hypersurface complement in $\BP(\cE)$ can be described explicitly in terms of the de-Rham cohomology of $\BP(\cE)$ with poles along $X_S$.
Based on this, one can further show that there is an isomorphism $\bar\varphi_S$
\begin{eqnarray}
 \bar\varphi_S: {A}/\cK_{\ud G} \xrightarrow{\sim} H^{n+k-1}_{dR}(\BP(\cE) \setminus X_S; \bC)
\end{eqnarray}
where we recall that 
$\cK_{\ud G}$ (see \eqref{kker}) is the sum of the images of the endomorphisms $\pa{y_i}+\prt{ S }{y_i}, \pa{x_j}+\prt{S}{x_j}$ of ${A}$ ($i =1, \cdots, k, j=0, \cdots, n$).
 The linear isomorphism $\bar J_{\ud G}: {A}/\cK_{\ud G}  \to H^{n-k}_{dR,\pr}(X; \bC)$ in the introduction is defined by the following sequence of above isomorphisms:
 {\small{
 \begin{eqnarray}\label{gma}
\qquad \quad \bar J_{\ud G}: {A}/\cK_{\ud G}  \xrightarrow{\bar \varphi_S}   H^{n+k-1}_{dR}(\BP(\cE) \setminus X_S; \bC) \xrightarrow{s^*}  H^{n+k-1}_{dR}(\BP^n \setminus X_{\ud G}; \bC)
\xrightarrow{\Res_{\ud G}} H^{n-k}_{dR,\pr}(X_{\ud G}; \bC).
\end{eqnarray}
 }}
 The concrete description of the first map $\bar\varphi_S$ in \eqref{gma} is crucial for our deformation theory; we refer to subsection \ref{sec3.2} for its concrete description. 
 \begin{rem}
 A more familiar explicit description of the primitive cohomology $H^{n-k}_{dR,\pr}(X_{\ud G}; \bC)$ is in terms of the Jacobian ideal of $S$.\footnote{The Jacobian ideal $Jac(S)$ is given by the sum of the images of the endomorphisms $\prt{ S }{y_i}, \prt{S}{x_j}$ of ${A}$ ($i =1, \cdots, k, j=0, \cdots, n$).} We refer to \cite[Theorem 1]{Dim95} or \cite{Ko91} and see also \cite{Gr69} for the pioneering work of Griffiths in the case $k=1$, the smooth projective hypersurface case; in our notations, there exists an isomorphism $\bar\Phi$
$$
\bar \Phi: A_{c_{\ud G}}/Jac(S) \cap A_{c_{\ud G}} \xrightarrow{\sim} H^{n-k}_{dR,\pr}(X_{\ud G}; \bC)
$$
%dohyeong: removed 'and' and added two commas
where sub-index $c_{\ud G}$ means the submodule in which the charge is 
$$
c_{\ud G} := \sum_{i=1}^k d_i - (n+1).
$$
\end{rem}
\begin{rem} \label{fourrem}
One could try to use a lift $\Phi:A_{c_{\ud G}} \to  H^{n-k}_{dR,\pr}(X_{\ud G}; \bC)$ of $\bar\Phi$ to obtain a polynomial realization of $\int_\g$ in \eqref{ytr}. But that is not a good choice for several reasons: 

(1) The map $J_{\ud G}$ can be given in the level of cochain complexes (see subsection \ref{sec3.2}) but the map $\Phi$ can not be given in the level of cochain complexes.\footnote{One can directly show that there is a non-canonical isomorphism between $A_{c_{\ud G}}/Jac(S) \cap A_{c_{\ud G}}$ and $A/\cK_{\ud G}$, i.e. their dimensions are the same. But this isomorphism is not induced from a linear map between $A$ and $A_{c_{\ud G}}$.} 

(2) $A_{c_{\ud G}}$ does not have commutative algebra structure unless $c_{\ud G}=0$ (Calabi-Yau case) induced from polynomial multiplication of $A=\bC[\ud q]$. The algebra structure of $A$ plays a crucial role in Theorem \ref{firthm} and \ref{sthm}. 

(3) The kernel $\cK_G$ of $J_{\ud G}$ enables us to understand $\int_\g$ as a BV integral\footnote{A BV (Batalin-Vilkovisky) integral is one of natural frameworks to study the Feynman path integral in physics.}, i.e. the polynomial realization \eqref{polyreal} lifts to a morphism from the DGBV algebra $\bvx$ (see subsection \ref{sec2.4}).
\end{rem}

%Based on this, one can further show\footnote{See subsection \ref{sec3.2}. We refer to Theorem 1 in \cite{Dim95} or \cite{Ko91} and see \cite{Gr69} for the pioneering work of Griffiths in the case $k=1$, the smooth projective hypersurface case.} that
%\begin{eqnarray}\label{gm}
%H^{n+k-1}_{dR}(\BP(\cE) \setminus X_S; \bC)\xrightarrow{\sim} {A}/\cK_{\ud G} \xrightarrow{\sim}  \left({A}/ Jac(S)\right)_{\ch=c_{\ud G}},
%\end{eqnarray}
%where
%$\cK_{\ud G}$ (see \eqref{kker}) is the sum of the images of the endomorphisms $\pa{y_i}+\prt{ S }{y_i}, \pa{x_j}+\prt{S}{x_j}$ of ${A}$ ($i =1, \cdots, k, j=0, \cdots, n$), %and 
%$Jac(S)$ is the Jacobian ideal of $S(\ud y, \ud x)$, and 
%$$
%c_{\ud G} := \sum_{i=1}^k d_i - (n+1).
%$$
%%dohyeong: removed 'and' and added two commas
%Here the sub-index $\ch=c_{\ud G}$ means the submodule in which the charge is $c_{\ud G}$.
%Note that $Jac(S)$ is the sum of the images of the endomorphisms $\prt{ S }{y_i}, \prt{S}{x_j}$ of ${A}$
% ($i =1, \cdots, k, j=0, \cdots, n$).
% 

 \subsection{Lie algebra representation attached to projective smooth complete intersections} \label{sec2.2}
The $\bC$-vector space $A/\cK_{\ud G}$ in \eqref{gma}
leads us to 
consider the following Lie algebra representation. Let $q_1=y_1, \cdots, q_k=y_k$ and $q_{k+1}=x_0, \cdots, q_{n+k+1}=x_{n}$. Let $\frg_\bC$ be an abelian Lie algebra over $\bC$ of dimension $n+k+1$. Let $u_{1}, u_2, \cdots, u_{n+k+1}$ be a ${\bC}$-basis of $\frg_\bC$. 
We associate a Lie algebra representation $\rho$ on ${A}$ of 
$\frg_\bC$ as follows:
\begin{eqnarray*}
\rho(u_i) := \pa{q_i}+\prt{ S(\ud q) }  {q_i}, \text { for $i=1, 2, \cdots, n+k+1$}.
\end{eqnarray*}
We extend this ${\bC}$-linearly to
get a Lie algebra representation $\rho: \frg_\bC\to\End_{{\bC}}({A})$. Then the 0-th Lie algebra homology is isomorphic to 
${A}/\cK_{\ud G}$. Also, the $(n+k+1)$-th Lie algebra cohomology is isomorphic to ${A}/\cK_{\ud G}$. In fact, the Chevalley-Eilenberg cohomology complex is the twisted de-Rham complex $(\Omega_{\bA^{n+k-1}}^\bullet, d + dS)$ of the affine space $\bA^{n+k-1}$. On the other hand, one can consider the Chevalley-Eilenberg homology complex.
More precisely, we use the cochain complex $(\cA_\rho^\bullet, K_\rho)$, which we call \textit{the dual Chevalley-Eilenberg complex}, such that $H^i(\cA_\rho^\bullet, K_\rho)$ $ \simeq H_{-i}(\frg_\bC, {A})$ for $i \in \Z$:
\be
\cA^\bullet=\cA_{\rho}^\bullet&=& {A}[\eta_{1},\eta_2, \cdots, \eta_N]=\bC[\ud q][\eta_1,\eta_2 \cdots, \eta_N], \\
K_{\ud G}=K_\rho&=&  \sum_{i=1}^N \left(\prt{ S(\ud q)}{q_i} + \pa{q_i}\right) \pa{\eta_i}:\cA_{\rho} \to \cA_{\rho}.
\ee
 We have
$$
\xymatrix{0 \ar[r] & \cA_\rho^{-(n+k+1)} \ar[r]^-{K_\rho} & \cA_\rho^{-(n+k)} \ar[r]^-{K_\rho} & \cdots \ar[r]^-{K_\rho} & \cA_\rho^{-1} \ar[r]^-{K_\rho} & \cA_\rho^{0}={A} \ar[r] & 0}
$$
where
$$
 \cA_\rho^{-s} = \bigoplus_{1\leq i_1 < \cdots < i_s\leq n+k+1} {A} \cdot \eta_{i_1} \cdots \eta_{i_s}, \quad 0 \leq s \leq n+k+1.
$$
Since one easily sees that $H^{n+k-1-s}(\Omega_{\bA^{n+k-1}}^\bullet, d + dS) \xrightarrow{\sim}
H^s(\cA_\rho^\bullet, K_\rho)$ for $s \in \bZ$ (using the fact that $\frg_\bC$ is abelian), i.e. their differential module structures are isomorphic, either complexes can be used to study the primitive middle-dimensional cohomology of $X$. 
The twisted de-Rham complex, sometimes called the algebraic Dwork complex, seems to be used more often in the literature (for example, \cite{Dim95}).
But note that \textit{the multiplication structure} (the $\bZ$-graded commutative wedge product structure) on $(\Omega_{\bA^{n+k-1}}^\bullet, d + dS)$ is different from the one (the $\bZ$-graded commutative algebra structure with the rule $\eta_i \eta_j = -\eta_j \eta_i$) on $(\cA, K_{\ud G})=(\cA_\rho, K_\rho)$. We observe that the product structure on $(\cA, K_{\ud G})$ induces a \textit{homotopy Lie structure} and a DGBV structure on $(\cA, K_{\ud G})$ (by measuring the failure of $K_{\ud G}$ being a derivation of the product successively), which governs a deformation theory of cochain complexes and maps. This type of algebraic structures was studied systematically in \cite{PP16} under the name``the descendant functor'' by 
%one of the authors with his collaborator. When we analyze the zeta function of $X$ over $\bF_q$, this descendant homotopy Lie structure will play a key role.
the second-named author with his collaborator.

We also consider the following differentials on $\cA=\cA_{\ud G}$:
\begin{eqnarray*}
Q_{\ud G}&:=&\sum_{i=1}^N \prt{S(\underline q)}{q_i} \pa{\eta_i}: \cA_{\ud G} \to \cA_{\ud G}, \\
\Delta &:=& K_{\ud G}-Q_{\ud G}=\sum_{i=1}^N  \pa{q_i} \pa{\eta_i}:\cA_{\ud G} \to \cA_{\ud G}.
\end{eqnarray*}
Both $Q_{\ud G}$ and $K$ also have degree 1 and $\Delta^2=Q_{\ud G}^2=0$. Therefore we have
\be
\Delta  Q_{\ud G} +Q_{\ud G}  \Delta =0.
\ee
Note that
$$
Jac(S)=Q_{\ud G} (\cA^{-1}) \quad \text{and} \quad \cK_{\ud G} = K_{\ud G} (\cA^{-1}).
$$

\subsection{A DGBV algebra}\label{sec2.4}
We briefly recall definitions of G-algebras, BV-algebras, GBV-algebras, and DGBV-algebras.
\begin{defn}\label{bvd}
Let $k$ be a field. Let $(\cC,\cdot)$ be a unital $\Z$-graded super-commutative and associative $k$-algebra.
Let $[\bullet, \bullet]: \cC \otimes \cC \to \cC$ be a bilinear map of degree 1.
%For any linear operator $K:\cC \to \cC$ of degree 1, define
%$$
%l_2^{K}(a,b) := (-1)^{|a|} \big(K(a\cdot b)-K(a) \cdot b -(-1)^{|a|} a \cdot K(b)\big), \quad a, b \in \cC.
%$$

(a) $(\cC, \cdot, [\cdot, \cdot])$ is called a G-algebra (Gerstenhaber algebra) over $k$ if
\begin{align*}
  \quad [a, b] &= (-1)^{|a||b|}[b,a],\\
  [a, [b,c]\!]&= (-1)^{|a|+1}[\![a,b],c]+(-1)^{(|a|+1)(|b|+1)} [b, [a,c]\!],\\
 \quad [a,b \cdot c]&= [a, b] \cdot c +(-1)^{(|a|+1)\cdot |b|} b \cdot [a,c],
 \end{align*}
 for any homogeneous elements $a, b, c \in \cC.$

(b) $(\cC,\cdot, {L},\ell_2^{L})$ is called a GBV(Gerstenhaber-Batalin-Vilkovisky)-algebra\footnote{$(\cC,\cdot, {L})$ is called a BV algebra, if $(\cC,\cdot, {L},\ell_2^{L})$ is a GBV algebra.} over $k$ where
\begin{align}\label{elltwo}
\ell_2^{{L}}(a,b):= {L}(a \cdot b)-{L}(a)\cdot b -(-1)^{|a|} a\cdot {L}(b), \quad a,b \in \cC,
\end{align}
 if 
$(\cC, {L}, \ell_2^{L})$ is a shifted\footnote{In the usual definition of dgla, the differential ${L}$ has degree 1 and the Lie bracket $\ell_2^{L}$ has degree 0. In our case, both ${L}$ and $\ell_2^{L}$ have degree 1.} dgla (differential graded Lie algebra) and
$(\cC, \cdot, \ell_2^{L})$ is a G-algebra.

 (c) $(\cC, \cdot, {L}, \ell_2^{L}, {P})$, where ${P}:\cC \to \cC$ is a linear map of degree 1, is called a DGBV(differential Gerstenhaber-Batalin-Vilkovisky) algebra if $(\cC, \cdot, {L},  \ell_2^{L}(\cdot, \cdot))$ is a GBV algebra
 and $(\cC,\cdot,{P})$ is a CDGA(commutative differential graded algebra), i.e.
 $$
 {P}^2=0, \quad {P}(a \cdot b) = {P}(a) \cdot b + (-1)^{|a|} a \cdot {P}(b), \quad a,b \in \cC,
 $$
 such that $(L+P)^2=0$.
\end{defn}

The DGBV algebra plays an important role to state the (closed string) mirror symmetry conjecture: for example, see \cite{BK} for its precise statement and a complex analytic construction of $B$-side DGBV algebra
attached to a compact Calabi-Yau complex manifold. Also, see \cite{KKP} for a relationship between our DGBV algebra of Proposition \ref{mthm} and the Barannikov-Kontsevich DGBV algebra in \cite{BK}.

Using the weight, we consider the increasing filtration $F^{\bullet}_{\wt}\cA$ defined by
\begin{eqnarray}\label{wf}
F^{j}_{\wt}\cA =\bigoplus_{0\leq w\leq j}\cA_{(w)}
\end{eqnarray}
where $\cA_{(w)}$ is the ${\C}$-subspace of $\cA$ generated by the monomials of weight $w$.
\begin{prop}
The weight filtration induces
a filtered  cochain  complex $(F^\bullet_{\wt}\cA, K_{\ud G})$.
\end{prop}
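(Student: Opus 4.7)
The plan is to extend the additive grading $\wt$ from $A=\bC[\ud q]$ to the full algebra $\cA=A[\eta_1,\ldots,\eta_N]$ by a suitable choice of weight on the odd generators $\eta_i$, and then verify that each summand of $K_{\ud G}$ shifts this grading non-positively. Since $F^j_{\wt}\cA$ is by definition the span of monomials of total weight at most $j$, weight-non-increase is exactly the filtered-morphism condition, and combined with the already-noted $K_{\ud G}^2=0$ this yields the desired filtered cochain complex $(F^\bullet_{\wt}\cA, K_{\ud G})$.

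Concretely, I would declare $\wt(\eta_i):=1-\wt(q_i)$, i.e.\ $\wt(\eta_\ell)=0$ for $\ell=1,\ldots,k$ and $\wt(\eta_j)=1$ for $j=k+1,\ldots,N$, extended additively to monomials. This is the unique extension under which the Dwork-type summand $Q_{\ud G}=\sum_{i=1}^N \prt{S(\ud q)}{q_i}\pa{\eta_i}$ becomes homogeneous of weight zero: for each $i$ one has $\wt\!\left(\prt{S}{q_i}\right)=1-\wt(q_i)=\wt(\eta_i)$, which exactly cancels the weight shift $-\wt(\eta_i)$ coming from $\pa{\eta_i}$. Here I use that $\prt{S}{y_\ell}=G_\ell(\ud x)$ has weight $0$ while $\prt{S}{x_j}=\sum_\ell y_\ell\,\prt{G_\ell}{x_j}$ has weight $1$.

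One then checks that the remaining piece $\Delta=\sum_i \pa{q_i}\pa{\eta_i}$ is homogeneous of weight $-1$: each summand contributes a shift of $-\wt(q_i)-\wt(\eta_i)=-1$, regardless of whether $q_i$ is a $y$- or an $x$-variable. Combining the two computations, $K_{\ud G}=Q_{\ud G}+\Delta$ sends $\cA_{(w)}$ into $\cA_{(w)}\oplus\cA_{(w-1)}$, so $K_{\ud G}(F^j_{\wt}\cA)\subseteq F^j_{\wt}\cA$ for every $j\geq 0$, as required.

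The argument is essentially bigrading bookkeeping on the $N$ generating summands of $K_{\ud G}$; there is no substantive obstacle. The only modest decision is the weight assignment on the $\eta_i$, and that is forced on us by the requirement that the Dwork piece $Q_{\ud G}$ be weight-homogeneous. One pleasant byproduct is that the decomposition $K_{\ud G}=Q_{\ud G}+\Delta$ into pieces of weights $0$ and $-1$ is the weight-graded piece of the filtration, which should be useful when later passing to the associated graded complex.
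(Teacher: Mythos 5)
Your argument is correct and takes essentially the same route as the paper's one-line proof, which simply observes that $K_{\ud G}=Q_{\ud G}+\Delta$ with $Q_{\ud G}$ preserving $\wt$ and $\Delta$ decreasing it by $1$. You spell out the weight bookkeeping the paper leaves implicit, and your chosen extension $\wt(\eta_i)=1-\wt(q_i)$ matches the convention the paper records later (in Section 3.1, via $\wt(q_\mu)+\wt(\eta_\mu)=1$), so there is no conflict.
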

\begin{proof}
Note that $K_{\ud G}=Q_{\ud G}+\Delta$. Since $Q_{\ud G}$ preserves $\wt$ and $\Delta$ decreases $\wt$ by 1, the result follows.% 
 \end{proof}

 \begin{prop} \label{mthm}
The quadruple $\bvx:=$ $ (\cA, \cdot, Q_{\ud G}, K_{\ud G}, \ell_2^{K_{\ud G}})$ associated to $X$ is a finitely generated DGBV algebra over $\bC$
with the following properties:% a cochain complex $(\cA, \cdot, K)=(\cA_{\ud G}, \cdot, K_{X})$ with a binary product associated to $\rho_X$ such that 

(a) We have a decomposition $\cA =\bigoplus_{-(n+k+1) \leq m \leq 0} \cA^m$ and there is a canonical isomorphism 
$$
J=J_{\ud G}: H_{K_{\ud G}}^0 (\cA) \mapto{\sim} H^{n-k}_{dR, \pr}(X_{\ud G}; {\bC})
$$
where $H_{K_{\ud G}}^0 (\cA)=\cA^{0}/K_{\ud G}(\cA^{-1})$ is the $0$-th cohomology module.

(b) The increasing weight filtration on $(\cA, K_{\ud G})$ in \eqref{wf} goes to the increasing Hodge filtration on $H_{dR, \pr}^{n-k}(X_{\ud G};\bC)$ under $J_{\ud G}$.

\end{prop}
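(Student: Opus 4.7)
The plan is to verify the DGBV axioms in Definition \ref{bvd}(c) directly for the explicit quadruple $\bvx$, and then reduce parts (a) and (b) to the cohomological results of Griffiths, Konno, Terasoma, and Dimca on projective complete intersections. The graded super-commutative algebra $\cA = \bC[\ud q][\eta_1, \ldots, \eta_N]$ with $|q_i| = 0$ and $|\eta_i| = -1$ is concentrated in degrees $-N, \ldots, 0$, yielding the decomposition $\cA = \bigoplus_{m = -N}^{0} \cA^m$, and all of $Q_{\ud G}, \Delta, K_{\ud G}$ have degree $+1$. The identities $Q_{\ud G}^2 = 0$, $\Delta^2 = 0$, and $\Delta Q_{\ud G} + Q_{\ud G}\Delta = 0$ reduce to the symmetry of mixed partials $\partial^2 S / \partial q_i \partial q_j$ against the antisymmetry of $\pa{\eta_i}\pa{\eta_j}$; these together give $K_{\ud G}^2 = 0$ and $(K_{\ud G} + Q_{\ud G})^2 = 0$. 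Since $Q_{\ud G}$ is a graded derivation of $\cdot$ by inspection, $(\cA, \cdot, Q_{\ud G})$ is a CDGA. The operator $\Delta = \sum_i \pa{q_i}\pa{\eta_i}$ is the standard odd symplectic BV Laplacian on the super-polynomial ring $\cA$; it is well-known to be a second-order differential operator, hence $(\cA, \cdot, \Delta, \ell_2^{\Delta})$ is a GBV algebra. Since $Q_{\ud G}$ is a derivation one has $\ell_2^{K_{\ud G}} = \ell_2^{\Delta}$, and the GBV axioms for $(\cA, \cdot, K_{\ud G}, \ell_2^{K_{\ud G}})$ follow, completing the DGBV verification.

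For part (a), I would compute $K_{\ud G}(\cA^{-1})$ directly: on $f \eta_i$ the differential returns $(\pa{q_i} + \prt{S}{q_i})(f)$, so $K_{\ud G}(\cA^{-1}) = \cK_{\ud G}$ by \eqref{kker}, giving $H^0_{K_{\ud G}}(\cA) = \cA^0 / K_{\ud G}(\cA^{-1}) = A / \cK_{\ud G}$. I would then take $J_{\ud G}$ to be the composition in \eqref{gma}, namely the Cayley-trick isomorphism $\bar\varphi_S$, the pullback $s^*$ along a section of $\BP(\cE) \to \BP^n$, and the residue isomorphism $\Res_{\ud G}$. That this composition is an isomorphism onto the primitive part $H^{n-k}_{dR, \pr}(X_{\ud G}; \bC)$ is the theorem of Konno \cite{Ko91}, with Griffiths \cite{Gr69} and Terasoma \cite{Ter90} handling the hypersurface and equal-degree cases respectively, which I would simply invoke.

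For part (b), I would follow the weight $\wt$ through the composition: a monomial in $A_{(w)}$ (total $y$-degree $w$) is sent by $\bar\varphi_S$ to a meromorphic form on $\BP(\cE)$ with pole of order $w + 1$ along $X_S$, and applying $s^*$ and $\Res_{\ud G}$ preserves this pole-order bookkeeping. The Griffiths--Dimca theorem (Theorem 1 of \cite{Dim95}) identifies the resulting pole-order filtration on $H^{n-k}_{dR, \pr}(X_{\ud G}; \bC)$ with the Hodge filtration $\bigoplus_{0 \leq q \leq w} H^{n-k-q, q}$, which is exactly the claimed compatibility. The main obstacle will be precisely this last identification: while the DGBV axioms and the computation in part (a) are mechanical once $\Delta$ is recognized as the standard BV Laplacian, matching the weight filtration with the Hodge filtration requires the full geometric content of Griffiths' residue calculus, transported through the Cayley trick to the toric ambient $\BP(\cE)$.
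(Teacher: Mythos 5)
Your proposal is correct and follows essentially the same route as the paper: the DGBV axioms are checked by direct (if routine) computation with $\Delta$ the standard BV Laplacian and $Q_{\ud G}$ an odd derivation, part (a) is the identification $K_{\ud G}(\cA^{-1})=\cK_{\ud G}$ combined with the composite isomorphism \eqref{gma} (Cayley trick, section pullback, residue, due to Griffiths--Terasoma--Konno), and part (b) is delegated to \cite[Theorem 1]{Dim95}. The only slip is cosmetic: a monomial of weight $w$ maps under $\bar\varphi_S$ to a form with pole order $w+k$ (not $w+1$) along $X_S$, but since the filtration comparison is anyway carried by Dimca's theorem this does not affect the argument.
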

\begin{proof}
 The fact that the quadruple $\bvx:=$ $ (\cA, \cdot, Q_{\ud G}, K_{\ud G}, \ell_2^{K_{\ud G}})$ is a DGBV algebra  can be deduced from straightforward (but lengthy) computations. 
  In fact, one can check that the dual Chevalley-Eilenberg complex attached to a representation of any abelian Lie algebra consisting of differential operators of order $\leq 1$ becomes a DGBV algebra for fairly general reasons: we refer to sections 2.2 and 2.3, \cite{PP16} for details.\footnote{This fact is intimately related to the BV-BRST formalism in the math-physics literature.}

\textit{(a)} follows from the isomorphism \eqref{gma} by noting that
$$
\cA^{0}=A, \quad K_{\ud G}(\cA^{-1}) =\cK_{\ud G}.
$$

\textit{(b)} is a reinterpretation of \cite[Theorem 1]{Dim95}.
\end{proof}

\section{Deformation formula for the period integrals of $X$} \label{sec3}

\subsection{Polynomial realizations of the period integrals}\label{sec3.1}

\begin{defn}\label{dp}(the polynomial realization of $C_\g^{\ud G}$ given in \eqref{ytr})
For any $[\g] \in H_{n-k}(X_{\ud G}(\bC), \bZ)$, we define a $\C$-linear map $\cC_\g^{\ud G}: \cA \to \C$ by 
$$
%\cC_\g \left(  y_1^{i_1} \cdots y_k^{i_k} h(\ud x)  \right) = 
\cC_\g^{\ud G} (f) :=\vpar{ \int_\g J_{\ud G} (f) }{f \in \cA^0=A}%=\cA_{\rho_X,0}^0}
{0}{\text{otherwise}}
$$
where $J=J_{\ud G}$ is given in \eqref{gma}.
\end{defn}

\begin{prop}\label{kkill}
The polynomial realization map $\cC_\g^{\ud G}$ is a cochain map from $(\cA, K_{\ud G})$ to $(\C,0)$.
\end{prop}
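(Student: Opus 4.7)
The plan is essentially a one-line verification that uses the key structural property of the map $J_{\ud G}$, namely that $\cK_{\ud G} = K_{\ud G}(\cA^{-1})$ is exactly the kernel of $J_{\ud G}$ on $\cA^{0} = A$, as recorded in Proposition \ref{mthm}\textit{(a)}. Since the target complex is $(\C, 0)$, being a cochain map amounts to the single condition $\cC_\g^{\ud G} \circ K_{\ud G} = 0$ on all of $\cA$.

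First I would dispense with the trivial degree reasons. The operator $K_{\ud G}$ has cohomological degree $+1$, so it sends $\cA^{m}$ to $\cA^{m+1}$. By definition, $\cC_\g^{\ud G}$ vanishes on $\cA^{m}$ for every $m\neq 0$. Consequently $\cC_\g^{\ud G}(K_{\ud G}(f))=0$ automatically whenever $f\in\cA^{m}$ with $m\neq -1$, because in that case $K_{\ud G}(f)\in \cA^{m+1}$ and $m+1\neq 0$.

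The only remaining case is $f\in\cA^{-1}$, where $K_{\ud G}(f)\in\cA^{0}=A$. Here I would invoke the equality
\[
K_{\ud G}(\cA^{-1}) \;=\; \cK_{\ud G},
\]
which is immediate from the definitions of $K_{\ud G}$ (see subsection \ref{sec2.2}) and $\cK_{\ud G}$ (see \eqref{kker}). Since the induced map $\bar J_{\ud G}: A/\cK_{\ud G}\to H^{n-k}_{dR,\pr}(X_{\ud G};\bC)$ of Proposition \ref{mthm}\textit{(a)} is well-defined, the composition $J_{\ud G}\circ K_{\ud G}$ vanishes on $\cA^{-1}$. Therefore
\[
\cC_\g^{\ud G}(K_{\ud G}(f)) \;=\; \int_\g J_{\ud G}(K_{\ud G}(f)) \;=\; \int_\g 0 \;=\; 0,
\]
which completes the verification.

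There is no real obstacle here: the content is packaged entirely in Proposition \ref{mthm}\textit{(a)}, and this proposition is essentially the justification for why the definition of $\cC_\g^{\ud G}$ lifts $C_\g^{\ud G}$ to the cochain level. The only thing to watch for is making the degree bookkeeping explicit, since $\cC_\g^{\ud G}$ is defined piecewise with respect to the cohomological grading.
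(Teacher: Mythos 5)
Your proof is correct and follows essentially the same route as the paper: the paper's proof is the single observation that $J_{\ud G}\circ K_{\ud G}=0$ by construction (i.e.\ $K_{\ud G}(\cA^{-1})=\cK_{\ud G}=\ker J_{\ud G}$), hence $\cC_\g^{\ud G}\circ K_{\ud G}=0$. You simply make explicit the degree bookkeeping that the paper leaves implicit, which is fine.
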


\begin{proof}
Since $J_{\ud G} \circ K_{\ud G} =0$ by the construction, it follows that $\cC_\g^{\ud G} \circ K_{\ud G}=0$.
 \end{proof}

Note that $\cA =A [\ud \eta]$, where $\eta_\m \eta_\n= -\eta_\n \eta_\m$, and define $\ch(\eta_\m)$ and $\wt(\eta_\m)$ by
$$
\ch(q_\m)+\ch(\eta_\m)=0,\quad
\wt(q_\m)+\wt(\eta_\m)=1.
$$
We have a decomposition
$$
\cA = \bigoplus_{-N\leq j \leq 0}\bigoplus_{\l \in \Z}\bigoplus_{w \geq 0}\cA^j_{\l, (w)},
$$
where $u \in \cA^j_{\l, (w)}$ if and only if $\ch(u)=\l$, $\wt(u)=w$ and the cohomological grading of $u$ is $j$.
For example, $S(\ud{q})\in \cA^0_{0,(1)}$ (the Dwork potential $S(\ud{q})$ was defined in \ref{dpot}).
Since $K_{\ud G}$ preserves the charge grading, we have a cochain subcomplex $(\cA_\l, K_{\ud G})$ for each charge $\l \in \Z$ .
It follows that the cohomology $H=H(\cA,K_{\ud G})$ of the cochain complex $(\cA, K_{\ud G})$ also has a charge decomposition
$$
H =\oplus_{\l \in \Z} H_\l, \qquad H_\l := H(\cA_\l,  K_{\ud G}).
$$

\begin{prop}  \label{chc}
The cohomology $H =H(\cA, K_{\ud G})$ is concentrated in charge $c_{\ud G}$, i.e.
$H = H_{c_{\ud G}}$.
\end{prop}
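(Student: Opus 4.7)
The plan is to prove acyclicity in every charge $\l \neq c_{\ud G}$ by exhibiting an explicit contracting cochain homotopy on $(\cA_\l, K_{\ud G})$. My candidate is the degree $-1$ left-multiplication operator
$$
h_{\ud G} \;:=\; \sum_{i=1}^N \ch(q_i)\, q_i\, \eta_i,
$$
which preserves the charge grading (since $\ch(q_i)+\ch(\eta_i)=0$) and therefore restricts to each $\cA_\l$. I claim that
$$
\bigl\{K_{\ud G},\, h_{\ud G}\bigr\} \;:=\; K_{\ud G}\, h_{\ud G} \;+\; h_{\ud G}\, K_{\ud G} \;=\; \cE^{c} \;-\; c_{\ud G}\cdot \id,
$$
where $\cE^{c} := \sum_\m \ch(q_\m) q_\m \pa{q_\m} + \sum_\m \ch(\eta_\m) \eta_\m \pa{\eta_\m}$ is the total charge Euler operator, acting on $\cA_\l$ as multiplication by $\l$.

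To establish this identity I would expand both compositions using only the graded Leibniz rule $\pa{\eta_j}(\eta_i u) = \d_{ij} u - \eta_i \pa{\eta_j}u$ and the product rule for $\pa{q_j}$ applied to $q_i u$. All terms of the form $\ch(q_i) q_i \eta_i (\pa{q_j}+\prt{S}{q_j})\pa{\eta_j}(u)$ reassemble into $-h_{\ud G} K_{\ud G}(u)$ and cancel. What survives is the scalar $\sum_{i=1}^N \ch(q_i) = (n+1)-\sum_\ell d_\ell = -c_{\ud G}$, the two Euler summands making up $\cE^{c}$, and a single potentially obstructing cross term $\sum_i \ch(q_i)\, q_i\, \prt{S}{q_i}$. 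The vanishing of this cross term is the conceptual heart of the argument: the Dwork potential $S(\ud q)=\sum_\ell y_\ell G_\ell(\ud x)$ is charge-homogeneous of charge $0$, since each summand satisfies $\ch(y_\ell)+\ch(G_\ell) = -d_\ell + d_\ell = 0$, so Euler's identity for the charge grading yields $\sum_i \ch(q_i) q_i \prt{S}{q_i} = \ch(S)\cdot S = 0$.

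Granting the identity, on $\cA_\l$ one has $\{K_{\ud G}, h_{\ud G}\} = (\l - c_{\ud G})\cdot \id$. Hence for every $\l \neq c_{\ud G}$ the rescaled operator $(\l - c_{\ud G})^{-1} h_{\ud G}$ is a contracting cochain homotopy, forcing $H_\l = 0$; combined with the charge decomposition $H=\bigoplus_\l H_\l$ recorded just above the proposition, this proves $H = H_{c_{\ud G}}$. The only genuine obstacle is the sign/Leibniz bookkeeping in the anti-commutator computation; once that is executed, the concentration in charge $c_{\ud G}$ follows mechanically from the two elementary inputs $\sum_i \ch(q_i) = -c_{\ud G}$ and $\ch(S) = 0$.
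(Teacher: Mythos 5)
Your proposal is correct and is essentially the paper's own argument in homotopy-operator form: your $h_{\ud G}$ is multiplication by the element $R=\sum_\m \ch(q_\m)q_\m\eta_\m$ used in the paper, and your anticommutator identity $\{K_{\ud G},h_{\ud G}\}=\cE^{c}-c_{\ud G}\cdot\id$ is exactly the paper's computation of $K_{\ud G}(f\cdot R)=(\l-c_{\ud G})f+K_{\ud G}(f)\cdot R$, resting on the same two inputs ($\sum_i\ch(q_i)=-c_{\ud G}$, i.e. $K_{\ud G}(R)=-c_{\ud G}$, and the Euler identity $Q_{\ud G}(R)=0$ from $\ch(S)=0$). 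So the approaches coincide, with yours merely repackaged as a contracting homotopy rather than an explicit exactness statement for closed elements.
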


\begin{proof}
Associated with the vector field $E_{\ch} :=\sum_{\m=1}^{N} \ch(q_\m)q_\m\Fr{\rd}{\rd q_\m}+\sum_{\m=1}^{N} \ch(\eta_\m)\eta_\m\Fr{\rd}{\rd \eta_\m}$ we define
$$
R:= \sum_\m \ch(q_\m) q_\m \eta_\m \in \cA^{-1}_{0}.
$$
Note that, for any $f\in \cA$,
$$
\d_R f:=\ell_2^{K_{\ud G}}(R, f) =  \sum_\m \ch(q_\m)\left(  q_\m\Fr{\rd}{\rd q_\m}- \eta_\m \Fr{\rd }{\rd \eta_\m}\right)f
$$
Hence for any $f \in \cA_\l$ we have $\d_R f = \l f$.
Note also that
$Q_{\ud G} (R) =0$, while
$$
K_{\ud G} (R) =- c_{\ud G}, \qquad c_{\ud G}=\sum_{\ell=1}^k d_\ell -n-1=\hbox{the background charge}.
$$
%Note that $c_{\ud G}=0$ means that $X$ is a Calabi-Yau variety. 
From the computation
$$
\eqalign{
K_{\ud G}(f\cdot R) 
&=\ell_2^{K_{\ud G}}(R, f) + K_{\ud G}(R)\cdot f + K_{\ud G}(f)\cdot R
\cr
&=\d_R f - c_{\ud G} f + K_{\ud G} (f)\cdot R
\cr
&= (\l -c_{\ud G})f + K_{\ud G}(f)\cdot R \quad (\text{if} \ f \in \cA_\l)
}
$$
we see that any $f \in \cA_\l\cap \Ker K_{\ud G}$  belongs to $\Im K_{\ud G}$ unless $\l=c_{\ud G}$, since
$ (\l -c_{\ud G})f= K_{\ud G}(f\cdot R)$. 
\end{proof}

This proposition and Proposition \ref{kkill} imply the following corollary.
\begin{cor}
The polynomial realization $\cC_\g^{\ud G}(f)$ is zero unless $f \in \cA^{0}_{c_{\ud G}}=A_{c_{\ud G}}$.
\end{cor}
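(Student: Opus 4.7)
The plan is to combine the two preceding propositions with the simple structural fact that the cochain complex $\cA$ is concentrated in non-positive cohomological degrees. First I would invoke Definition \ref{dp}: since $\cC_\g^{\ud G}$ is declared to vanish on $\cA^j$ for $j \neq 0$, the only nontrivial case is $f \in \cA^0 = A$, and by $\bC$-linearity I may decompose such an $f$ into its homogeneous pieces with respect to the charge grading and prove the statement one charge at a time. So the reduction is: for $\l \in \bZ$ with $\l \neq c_{\ud G}$ and any $f \in A_\l = \cA^0_\l$, show that $\cC_\g^{\ud G}(f)=0$.

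For the key calculation I would reuse verbatim the identity derived in the proof of Proposition \ref{chc}, namely
\[
K_{\ud G}(f \cdot R) = (\l - c_{\ud G}) f + K_{\ud G}(f) \cdot R \qquad \text{for } f \in \cA_\l.
\]
The new simplification available here is that $f \in \cA^0$ forces $K_{\ud G}(f)=0$ automatically: since $\cA$ lives in degrees $-(n+k+1) \le j \le 0$, the target $\cA^1$ is zero. Rearranging, for $\l \neq c_{\ud G}$,
\[
f = \frac{1}{\l - c_{\ud G}} \, K_{\ud G}(f \cdot R) \in \Im K_{\ud G}.
\]
Finally, Proposition \ref{kkill} says that $\cC_\g^{\ud G}$ is a cochain map to the trivial complex $(\bC,0)$ and hence vanishes on every $K_{\ud G}$-coboundary, so $\cC_\g^{\ud G}(f)=0$ as required.

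I do not anticipate any real obstacle: all the substantive content was packaged into Propositions \ref{kkill} and \ref{chc}, and the corollary is a one-line consequence once one observes that at the top cohomological degree $\ker K_{\ud G} \cap \cA^0 = \cA^0$, so every element of $A$ is automatically closed and the coboundary formula from Proposition \ref{chc} applies to $f$ itself rather than only to a chosen cocycle representative.
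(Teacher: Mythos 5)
Your argument is correct and is exactly the one the paper leaves implicit when it says the corollary ``follows from'' Propositions \ref{chc} and \ref{kkill}: decompose by charge, note that $K_{\ud G}$ vanishes on $\cA^0$ since $\cA^1=0$, use the identity from the proof of Proposition \ref{chc} to exhibit every $f\in A_\l$ with $\l\neq c_{\ud G}$ as a $K_{\ud G}$-coboundary, and then apply Proposition \ref{kkill}. Nothing is missing; you have simply spelled out the one-line deduction the paper asserts.
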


\subsection{An explicit description of the map $J$ and proof of Theorem \ref{firthm} }\label{sec3.2}
%\subsection{An explicit description of the map $J_{\ud G}$ } \label{sec3.2}
Here we give an explicit description of the map $J=J_{\ud G}$ in \eqref{Jmap} and prove Theorem \ref{firthm}. In order to prove Theorem \ref{firthm}, we will need explicit descriptions of the maps $\bar\varphi_S$ and $s^*$ in \eqref{gma}.

 We will achieve this by using the algebraic (twisted) de Rham complexes and Dimca's result in \cite{Dim95}.
For any commutative $\bQ$-algebra $C$, let us consider the de Rham complex $(\Omega^\bullet_C, d)$ and the twisted de Rham complex $(\Omega^\bullet_C, d+ dS \wedge)$ for any element $S\in C$. We also define the charge and the weight on the de Rham complex by
\[
	\begin{cases}
		\ch(q_i)=\ch(dq_i), &i=1,2,\dots,N\\
		\wt(q_i)=\wt(dq_i). &i=1,2,\dots,N\\
	\end{cases}
\]
which provides us the weight and the charge decomposition of $\Omega$ such that
\[
	\Omega = \bigoplus_{-N\leq j \leq 0}\bigoplus_{\l \in \Z}\bigoplus_{w \geq 0}\Omega^j_{\l, (w)}.
\]

%In particular, we will consider the following de Rham complexes:
%\[
%	(\Omega_A^\bullet,dS\wedge+d),~(\Omega_{A[S^{-1}]}^\bullet,d),~(\Omega_{A[S^{-1}]_{0,(0)}}^\bullet,d).
%\]

\begin{defn} 
We define a sequence of maps as follows (here $N=n+k+1$):
	\begin{enumerate}[(i)]
		\item Let $\mu:(\bigoplus_{r=-N+1}^0\mathcal{A}_{c_{\ud G}}^r,Q_{\ud G}+\Delta)\to\left(\bigoplus_{r=1}^{N}\left(\Omega_A^r\right)_0[-N],dS+d\right)$ be the cochain map defined by
		\[
			\mu(\underline{q}^{\underline{u}}\eta_{i_1}\cdots\eta_{i_\ell})=(-1)^{i_1+\cdots+i_\ell+\ell+1}\underline{q}^{\underline{u}}dq_1\cdots\widehat{dq_{i_1}}\cdots\widehat{dq_{i_\ell}}\cdots dq_N
		\]
		where $\underline{q}^{\underline{u}}= q_1^{u_1} \cdots q_N^{u_N}$.
		\item Let $\rho:\left(\bigoplus_{r=1}^{N}\left(\Omega_A^r\right)_0,dS+d\right)\to\left(\bigoplus_{r=1}^{N}\left(\Omega_{A[S^{-1}]}^r\right)_{0,(0)},d\right)$ be the cochain map defined by
		\[
			\rho(\underline{x}^{\underline{u}}\underline{y}^{\underline{v}}d \underline{x}_{\underline{\alpha}}\wedge d\underline{y}_{\underline{\beta}})=(-1)^{|\underline{v}|+|\underline{\beta}|-1}(|\underline{v}|+|\underline{\beta}|-1)!\frac{\underline{x}^{\underline{u}}\underline{y}^{\underline{v}}}{S^{|\underline{v}|}}d \underline{x}_{\underline{\alpha}}\wedge\frac{d \underline{y}_{\underline{\beta}}}{S^{|\underline{\beta}|}},
		\]
		where $\underline{x}^{\underline{u}} = x_0^{u_0} \cdots x_n^{u_n}$, $\underline{y}^{\underline{v}}=y_1^{v_1}\cdots y_k^{v_k}$, $d \underline{x}_{\underline{\alpha}}=dx_{\a_1}\wedge \cdots \wedge dx_{\a_i}$, and $d\underline{y}_{\underline{\beta}}=dy_{\b_1} \wedge \cdots \wedge dy_{\b_j}$ with  $|\ud v| =v_{1} + \cdots + v_{k}$ and $|\underline{\beta}|=j$.
		\item Let $\theta_{\ch}$ be the contraction operator with the vector field $\sum_{i=1}^N\ch(q_i)q_i\frac{\partial}{\partial q_i}$, and $\theta_{\wt}$ be the contraction operator with the vector field $\sum_{i=1}^N\wt(q_i)q_i\frac{\partial}{\partial q_i}$.
	\end{enumerate}
\end{defn}
\begin{defn}
Define $\varphi_S$ to be the composition of the following maps:
\[
	\begin{tikzcd}
		\mathcal{A}_{c_{\ud G}}^0 \arrow[r,"\mu"] & \left(\Omega^N_{A}\right)_0 \arrow[r,"\rho"] & \left(\Omega^N_{A[S^{-1}]}\right)_{0,(0)} \arrow[r,"\theta_{wt}\circ\theta_{ch}"] & \Omega_{B}^{N-2} \arrow[r,"\textrm{quotient}"] & \Omega_B^{N-2}/d(\Omega_B^{N-3})=H_{\mathrm{dR}}^{N-2}(\mathbf{P}(\mathcal{E})\setminus X_S;\bC)
	\end{tikzcd}
\]
where $B:=\mathbb{C}[\underline{q},S^{-1}]_{0,(0)}$ and $\mathbf{P}(\mathcal{E})\setminus X_S$ is a smooth affine variety whose coordinate ring is given by $B$. Then we extend the domain of $\varphi_S$ to $A=\cA^0$ by setting $\varphi_S(u)=0$ for $\ch(u) \neq c_{\ud G}$.
\end{defn}

From the explicit description of $\varphi_S$ and Proposition \ref{chc}, the following proposition follows.
\begin{prop}\label{kg}
The kernel of $\varphi_S:\cA^0 \to H_{\mathrm{dR}}^{n+k-1}(\mathbf{P}(\mathcal{E})\setminus X_S;\bC)$ is $\cK_{\ud G}=K_{\ud G} (\cA^{-1})= (Q_{\ud G}+\Delta)(\cA^{-1})$.
\end{prop}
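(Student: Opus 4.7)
The plan is to split the question by the charge grading and then reduce the essential content to a Cayley--Griffiths--Dimca type isomorphism. First I decompose $\cA^0 = \bigoplus_\l \cA^0_\l$; since $K_{\ud G}$ preserves the charge, the subspace $\cK_{\ud G}=K_{\ud G}(\cA^{-1})$ inherits the decomposition, and by definition $\varphi_S$ annihilates $\cA^0_\l$ whenever $\l\ne c_{\ud G}$. For any such $\l$, every $f\in\cA^0_\l$ is automatically $K_{\ud G}$-closed because $\cA^1=0$, so Proposition~\ref{chc} (more precisely, the identity $K_{\ud G}(f\cdot R) = (\l-c_{\ud G})f$ in its proof, with $R\in\cA^{-1}_0$) forces $f\in K_{\ud G}(\cA^{-1}_\l)\subset\cK_{\ud G}$. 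Hence on these components both $\ker\varphi_S$ and $\cK_{\ud G}$ coincide with the full $\cA^0_\l$, and the equality of kernels is immediate there.

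The real content lies in the charge $c_{\ud G}$ piece, where I have to show
$$\ker\bigl(\varphi_S|_{\cA^0_{c_{\ud G}}}\bigr) \;=\; K_{\ud G}(\cA^{-1}_{c_{\ud G}}).$$
For the inclusion $\supseteq$ I would verify cochain-map properties of the three building blocks of $\varphi_S$: (i) $\mu$ intertwines $K_{\ud G}=Q_{\ud G}+\Delta$ with the twisted de Rham differential $d+dS\wedge$, up to the shift $[-N]$, which is the standard identification between the dual Chevalley--Eilenberg complex of an abelian Lie algebra represented by order $\leq 1$ differential operators and the twisted de Rham complex (also the content underlying Proposition~\ref{mthm}\textit{(a)} and developed in detail in \cite{PP16}); (ii) $\rho$ intertwines $d+dS\wedge$ with the ordinary de Rham differential $d$ on $\Omega^\bullet_{A[S^{-1}]}$, restricted to charge- and weight-zero forms, which is a direct Griffiths-type pole-reduction calculation motivating the factor $(|\underline v|+|\underline\beta|-1)!$ and the negative powers of $S$ in the formula; (iii) the contractions $\theta_{\ch}\circ\theta_{\wt}$ commute with $d$ on forms of charge and weight zero, and the final quotient by $d(\Omega^{N-3}_B)$ kills the image of $d$. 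Chaining these, $\varphi_S(K_{\ud G}\xi)$ is represented by a $d$-exact form on $B$, hence is zero.

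The reverse inclusion $\subseteq$, which sharpens the statement to an equality, would follow from the Cayley--Dimca isomorphism (\cite[Theorem 1]{Dim95}; compare \cite{Ko91} and the discussion in subsection~\ref{sec2.1}), asserting that the induced map
$$\bar\varphi_S: \cA^0_{c_{\ud G}}/K_{\ud G}(\cA^{-1}_{c_{\ud G}}) \xrightarrow{\ \sim\ } H^{n+k-1}_{dR}\bigl(\BP(\cE)\setminus X_S;\bC\bigr)$$
is an isomorphism, \emph{once one checks that our explicit $\bar\varphi_S$ literally coincides with Dimca's construction}. The matching amounts to recognising that $\rho\circ\mu$ realises a polynomial of charge $c_{\ud G}$ as a meromorphic top form on $\bA^N$ whose pole order along $\{S=0\}$ is dictated by its $y$-degree, and that $\theta_{\ch}\circ\theta_{\wt}$ is precisely the cone descent -- interior multiplication by the two Euler vector fields for the $(\bC^*)^2$-action defining $\BP(\cE)$ -- converting such a form into an $(N-2)$-form on $\BP(\cE)\setminus X_S$. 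I expect the main technical obstacle to be the bookkeeping of numerical normalisations -- the signs $(-1)^{i_1+\cdots+i_\ell+\ell+1}$ in $\mu$, the factor $(-1)^{|\underline v|+|\underline\beta|-1}(|\underline v|+|\underline\beta|-1)!$ in $\rho$, and the Euler weights encoded in $\theta_{\ch}$ and $\theta_{\wt}$ -- so that $\bar\varphi_S$ genuinely equals Dimca's isomorphism rather than merely agreeing with it up to non-zero scalars on each Hodge-filtered piece, for otherwise the identification of kernels would only be valid up to those scalars.
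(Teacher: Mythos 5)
Your plan fills in, in detail, what the paper dispatches with a single sentence (``From the explicit description of $\varphi_S$ and Proposition \ref{chc}, the following proposition follows.''). The decomposition by charge, the observation that off-charge-$c_{\ud G}$ components of $\cA^0$ are contained in $K_{\ud G}(\cA^{-1})$ by the identity $K_{\ud G}(fR)=(\l-c_{\ud G})f$ from the proof of Proposition \ref{chc}, and the cochain-map argument for the inclusion $K_{\ud G}(\cA^{-1}_{c_{\ud G}})\subseteq\ker\varphi_S$ are all correct and match the intent of the paper.

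One correction concerning the reverse inclusion: you say the induced $\bar\varphi_S$ on $\cA^0_{c_{\ud G}}/K_{\ud G}(\cA^{-1}_{c_{\ud G}})$ is an isomorphism ``once one checks that our explicit $\bar\varphi_S$ literally coincides with Dimca's construction.'' This cannot hold literally: Dimca's \cite[Theorem 1]{Dim95} (as recalled in the paper's remark following \eqref{gma}) is stated with the Jacobian-ideal quotient $A_{c_{\ud G}}/(Jac(S)\cap A_{c_{\ud G}})$ --- that is, modulo $Q_{\ud G}(\cA^{-1})$ --- not modulo $\cK_{\ud G}=K_{\ud G}(\cA^{-1})$, and the paper emphasizes that the resulting isomorphism between these two quotients is non-canonical. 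So you should not expect a domain-matching of the two maps. The correct argument is slightly different: (i) the formula of Proposition \ref{pone} shows that $\varphi_S$ takes each monomial $\ud y^{\ud i}u(\ud x)$ to (a nonzero multiple of) Dimca's $\a$-form, and by Dimca's results these span $H^{n+k-1}_{dR}(\BP(\cE)\setminus X_S;\bC)$, hence $\varphi_S$ is surjective; (ii) the well-definedness of $\bar\varphi_S$ from step $\supseteq$, combined with the dimension equality $\dim_\bC A/\cK_{\ud G}=\dim_\bC A_{c_{\ud G}}/(Jac(S)\cap A_{c_{\ud G}})=\dim_\bC H^{n+k-1}_{dR}$, forces injectivity, i.e.\ the reverse inclusion. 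Also, your final worry that the kernel identification ``would only be valid up to those scalars'' if the normalisations differ is unfounded: rescaling the components of $\varphi_S$ by nonzero constants never changes its kernel, so the sign/factorial bookkeeping is needed for later computations (e.g.\ Proposition \ref{pone}, Lemma \ref{alter}) but not for this proposition.
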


By this proposition \ref{kg}, $\varphi_S$ induces a map $\bar\varphi_S: \cA^0/\cK_{\ud G} \xrightarrow{\sim} H_{\mathrm{dR}}^{n+k-1}(\mathbf{P}(\mathcal{E})\setminus X_S;\bC)$, which we define to be the first map in \eqref{gma}.

%
%\[
%	\begin{tikzcd}
%		\Omega_B^{N-2}/d(\Omega_B^{N-3})=H_{\mathrm{dR}}^{N-2}(\mathbf{P}(\mathcal{E})\setminus X_S;\bQ)\arrow[r,"\sim","s^*"'] & H_{\mathrm{dR}}^{N-2}(\mathbf{P}^n\setminus X_{\underline{G}};\bQ)\arrow[r,"\sim","\mathrm{Res}_{\underline{G}}"'] & H_{dR,\mathrm{prim}}^{n-k}(X_{\underline{G}};\mathbb{Q})
%	\end{tikzcd}
%\]
%where $B:=\mathbb{C}[\underline{q},S^{-1}]_{0,(0)}$ and $\mathbf{P}(\mathcal{E})\setminus X_S$ is a smooth affine variety whose coordinate ring is given by $B$. 
%
%\begin{thm}\label{dgbvm}
%	The map $\varphi$ is surjective and $\mathrm{ker}\varphi\simeq(Q_S+\Delta)(\mathcal{A}^{-1}_{c_{\ud G}})$. Thus we have an induced isomorphism $\varphi$ (using the same notation) of finite dimensional $\mathbb{C}$-vector spaces
%	\[
%		\begin{tikzcd}
%			\mathcal{A}_{c_{\ud G}}^0/(Q_S+\Delta)(\mathcal{A}^{-1}_{c_{\ud G}})\arrow[r,"\sim","\varphi"'] & H_{\mathrm{prim}}^{n-k}(X_{\underline{G}},\mathbb{C}).
%		\end{tikzcd}
%	\]
%\end{thm}
%Moreover, $\varphi$ sends the increasing filtration in \eqref{wf} to the increasing Hodge filtration of $H^{n-k}_{\pr}(X,\bC)$.
%
%, and for $\ud z=(z_0, \cdots, z_r)$, we set
%\[
%\O_{\ud z}=\sum (-1)^j z_j dz_0 \wedge \cdots \wedge \hat{dz_j} \wedge \cdots \wedge dz_k.
%\]

%
%Define the vector bundle over $\BP^n$ given by $\cE=\cO(d_1) \oplus \cdots \oplus
%\cO(d_k)$, and let $\BP(\cE)$ be the corresponding projective bundle.
%The polynomial $S(\ud q)=\sum_{i=1}^k y_i G_i(\ud x)$ defines a smooth hypersurface $X_S$ in $\BP(\cE)$. 

For an explicit description of a section $s$ of the second map $s^*$ in \eqref{gma}, we review the toric quotient construction of the projective bundle $\BP(\cE)$. Let $G=\C^*\times \C^*$. Let $U_G=(\C^{n+1}\setminus \{0\}) \times (\C^{k}\setminus \{0\})$, consider the following $G$-action on $U_G$:
\[
(u,v) (\ud x,\ud y)=(u x_0, \cdots, ux_n, u^{-d_1} vy_1, \cdots, u^{-d_k} v y_k).
\]
The hypersurface $X_S$ in $U_G$ is $G$-invariant. 
The lemma 17, \cite{Dim95}, says that the geometric quotient $U_G/G$ is naturally identified to the projective bundle $\BP(\cE)$.
 Let $d=lcm(d_1, \cdots, d_k)$ and define the positive integers $e_i=d/d_i$. According to page 98, \cite{Dim95}, one has a well-defined section
\[
s_{\ud G}(\ud x) =(\ud x, G_1^{e_1-1}(\ud x)\overline G_1^{e_1}(\ud x), \cdots, G_1^{e_k-1}(\ud x)\overline G_1^{e_k}(\ud x) )
\]
to the projection map $pr_1:\BP(\cE)\setminus X_S \to \BP^n \setminus X_{\ud G}$.
In summary, we have
$$
\bar J_{\ud G}= \Res_{\ud G} \circ s_{\ud G}^* \circ \bar \varphi_S.
$$

\begin{rem}
Our idea of proving Theorem \ref{firthm} is to use the Laplace transform
\begin{eqnarray}\label{laplace}
 \int_{0}^\infty \cdots \int_{0}^\infty  \ud y^{\ud i} u(\ud x) e^{S(\ud q)} dy_1 \cdots dy_k  
= \frac{(-1)^p i_1 ! \cdots i_k !  u(\ud x)  }{ G_1^{i_1+1} \cdots G_k^{i_k+1}}.
\end{eqnarray}
But the denominator of the RHS of \eqref{laplace} is not directly related to the denominators of the rational differential forms of neither $H_{\mathrm{dR}}^{n+k-1}(\mathbf{P}(\mathcal{E})\setminus X_S;\bC)$ nor $H^{n+k-1}_{dR}(\BP^n\setminus X_{\ud G};\bC)$. It rather relates to the denominator of the rational differential form of $H^n_{dR}(\BP^n \setminus D;\bC)$
where $D$ is the divisor defined by $G_1(\ud x) \cdots G_k(\ud x) =0$. This leads to necessity of finding a different description (Propositions \ref{pone} and \ref{dc}) of the map $s_{\ud G}^* \circ \bar \varphi_S$ through $H^n_{dR}(\BP^n \setminus D;\bC)$ and \cite[Proposition 10]{Dim95} turns out to give us the desired answer.
\end{rem}

Let
$$
\Omega_{\ud x} = \sum_{i=0}^n (-1)^i x_i (dx_0\wedge \cdots \wedge \hat {d x_i} \wedge \cdots \wedge dx_n), \quad 
\Omega_{\ud y} = \sum_{i=1}^k (-1)^i y_i (dy_1\wedge \cdots \wedge \hat {d y_i} \wedge \cdots \wedge dy_k).
$$
For a multi-index $\ud i=(i_1, \cdots, i_k)$ with $|\ud i|=i_1 + \cdots +i_k$, define the following rational differential form (\cite[p 93]{Dim95})
\[
\b(S, \ud y^{\ud i} u(\ud x)):=(-1)^{|\ud i|+1}\frac{i_1! \cdots i_k!\cdot u(\ud x)}{G_1^{i_1+1}(\ud x) \cdots G_k^{i_k+1}(\ud x)} \O_{\ud x},
\]
for $\ud y^{\ud i} u(\ud x) \in \cA_{c_{\ud G}}^0$. 
Also consider the rational differential form (\cite[p 90]{Dim95})
\[
\a(S,\ud y^{\ud i}u(\ud x) ):=(-1)^{n(k-1)+|\ud i|}(|\ud i|+k-1)!\frac{\ud y^{\ud i}u(\ud x) }{S(\ud q)^{k+|\ud i|}} \O_{\ud x} \O_{\ud y}
\]
where $\ud y^{\ud i}= y_1^{i_1} \cdots y_k^{i_k}$ and $\ud y^{\ud i}u(\ud x) \in \cA_{c_{\ud G}}^0$. 
\begin{prop} \label{pone}
For $\ud y^{\ud i}u(\ud x) \in \cA_{c_{\ud G}}^0$, we have
\begin{eqnarray}\label{result}
\bar \varphi_S ([\ud y^{\ud i}u(\ud x) ])= [\a(S,\ud y^{\ud i}u(\ud x) )]
\end{eqnarray}
where $[\cdot]$ means the cohomology class.
\end{prop}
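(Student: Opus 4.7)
The strategy is to trace $\ud y^{\ud i}u(\ud x) \in \cA^0_{c_{\ud G}}$ through the four constituent maps of $\varphi_S$ and match the result against $\alpha(S, \ud y^{\ud i}u(\ud x))$ modulo exact forms. Because $\ud y^{\ud i}u(\ud x)$ carries no $\eta$-variables, $\mu$ sends it (with sign $-1$) to the top form $\ud y^{\ud i}u(\ud x)\,dq_1\wedge\cdots\wedge dq_N$ on $\bA^N$. After reordering the $dy$'s past the $dx$'s (sign $(-1)^{k(n+1)}$) and invoking $\rho$ with $|\ud\beta|=k$ and $|\ud v|=|\ud i|$, the denominator collapses to $S^{|\ud i|+k}$ and the prefactor accumulates the factorial $(|\ud i|+k-1)!$ together with an additional sign.

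The key conceptual step is to recognize $\theta_{ch}$ and $\theta_{wt}$ as contraction with the infinitesimal generators of the two $\bC^*$-factors of $G=\bC^*\times\bC^*$ acting on $U_G=(\bA^{n+1}\setminus\{0\})\times(\bA^k\setminus\{0\})$ with quotient $U_G/G = \BP(\cE)$: namely $E_{ch} = \sum_j x_j\partial_{x_j} - \sum_\ell d_\ell y_\ell\partial_{y_\ell}$ and $E_{wt} = \sum_\ell y_\ell\partial_{y_\ell}$. Under this interpretation, $\theta_{wt}\circ\theta_{ch}$ is the classical operation descending a $G$-equivariant top form on $U_G$ to a form on $\BP(\cE)$. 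Expanding $\theta_{wt}\circ\theta_{ch}(dx_0\wedge\cdots\wedge dx_n\wedge dy_1\wedge\cdots\wedge dy_k)$ by the Leibniz rule for interior products produces the principal term $\pm\Omega_{\ud x}\wedge\Omega_{\ud y}$ plus a remainder coming from the $-\sum_\ell d_\ell y_\ell\partial_{y_\ell}$ piece of $E_{ch}$, which takes the shape of a $(k-2)$-form in the $dy$'s wedged with $dx_0\wedge\cdots\wedge dx_n$.

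This remainder is $d$-exact in $\Omega^{N-2}_B$ with $B=\bC[\ud q,S^{-1}]_{0,(0)}$, hence vanishes in the quotient. The key input is Cartan's magic formula $\mathcal{L}_{E_{ch}} = d\iota_{E_{ch}} + \iota_{E_{ch}}d$ applied to the $G$-homogeneous product of the coefficient $\ud y^{\ud i}u(\ud x)/S^{|\ud i|+k} \in B$ with the ambient top form: since the coefficient is $G$-invariant, the Lie derivative detects only the weight of the volume form under $G$, and this identity forces the remainder to be exact on the affine variety $\BP(\cE)\setminus X_S$. Matching accumulated signs then yields $[\varphi_S(\ud y^{\ud i}u(\ud x))] = [\alpha(S,\ud y^{\ud i}u(\ud x))]$. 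A more economical alternative is to invoke \cite[Proposition 10]{Dim95}, which directly identifies $\alpha(S,\cdot)$ as the canonical cohomology representative for the ambient hypersurface $X_S\subset\BP(\cE)$, and thereby circumvents the need to track individual signs.

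The main obstacle is the exactness of the remainder produced by the double contraction; once this is verified (or bypassed via \cite[Proposition 10]{Dim95}), the rest reduces to a routine combinatorial sign check producing the prefactor $(-1)^{n(k-1)+|\ud i|}$.
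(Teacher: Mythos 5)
Your reduction through $\mu$ and $\rho$, and your reading of $\theta_{\wt}\circ\theta_{\ch}$ as contraction with the generators of the $G$-action, agree with the paper's computation, and your splitting of the double contraction into the principal term $\pm\,\Omega_{\ud x}\wedge\Omega_{\ud y}$ and the remainder $(\theta_{\ch}\Omega_{\ud y})\wedge dx_0\wedge\cdots\wedge dx_n$ is exactly the paper's. The gap is in how you dispose of the remainder. You assert it is $d$-exact in $\Omega_B^{N-2}$ ``by Cartan's magic formula, since the coefficient is $G$-invariant,'' but the mechanism you sketch does not deliver this: applying $\mathcal{L}_E=d\iota_E+\iota_E d$ to the invariant top form $f\,dq_1\wedge\cdots\wedge dq_N$ (which is closed and has vanishing Lie derivative) only shows that its contractions are closed; it says nothing about the individual, non-horizontal summand $f\,(\theta_{\ch}\Omega_{\ud y})\wedge dx_0\wedge\cdots\wedge dx_n$, and you never exhibit a primitive for it. (Its exactness is true a posteriori, but only because its cohomology class vanishes --- which is precisely what has to be proved.) The paper's argument at this point is different and is genuinely needed: it applies the pullback $s_{\ud G}^*$ along the section $s_{\ud G}$ of $pr_1$; since $\BP^n\setminus X_{\ud G}$ has dimension $n$, any form containing the factor $dx_0\wedge\cdots\wedge dx_n$ pulls back to zero, and since $s_{\ud G}^*$ is an isomorphism on $H^{n+k-1}_{dR}$, the class of the remainder vanishes in $H^{n+k-1}_{dR}(\BP(\cE)\setminus X_S;\bC)$. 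You need either this injectivity argument or an explicit primitive; neither is supplied.

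Your fallback of ``invoking \cite[Proposition 10]{Dim95}'' does not close the gap either: that proposition (Proposition \ref{dc} in the paper) states $\delta_{\ud G}([\b(S,\ud y^{\ud i}u)])=s_{\ud G}^*([\a(S,\ud y^{\ud i}u)])$, i.e.\ it compares the two rational forms $\b$ and $\a$ after applying $\delta_{\ud G}$ and $s_{\ud G}^*$; it does not identify the chain-level map $\varphi_S$ with $u\mapsto[\a(S,u)]$, which is the actual content of Proposition \ref{pone}, and in the paper it is used downstream (Lemma \ref{alter}) in combination with this proposition, not as a substitute for it. A minor further point: the reordering sign $(-1)^{k(n+1)}$ you introduce is spurious, since $q_1,\dots,q_k$ are the $y$-variables, so $\mu$ already lands on $dy_1\wedge\cdots\wedge dy_k\wedge dx_0\wedge\cdots\wedge dx_n$.
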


%Then a simple computation confirms that
%\begin{eqnarray}\label{result}
%\varphi_S ([\ud y^{\ud i}u(\ud x) ])= [\a(S,\ud y^{\ud i}u(\ud x) )]
%\end{eqnarray}
%where $[\cdot]$ means the cohomology class.
\begin{proof}
\begin{align*}
\ud y^{\ud i}u(\ud x)&\xymatrix{ \ar@{|->}[r]^-\mu & }-y^iu(x)dy_1\wedge\cdots\wedge dy_k\wedge dx_0\wedge\cdots\wedge dx_n\\
&\xymatrix{ \ar@{|->}[r]^-\rho & }(-1)^{|i|+k}(|i|+k-1)!\frac{\ud y^{\ud i} u(\ud x)}{S^{|i|+k}}dy_1\wedge\cdots\wedge dy_k\wedge dx_0\wedge\cdots\wedge dx_n
\end{align*}
Now we apply $\theta_{wt}\theta_{ch}$ to $dy_1\wedge\cdots\wedge dy_k\wedge dx_0\wedge\cdots\wedge dx_n$:
\begin{align*}
&\quad\theta_{wt}\theta_{ch}(dy_1\wedge\cdots\wedge dy_k\wedge dx_0\wedge\cdots\wedge dx_n)\\
&=-\theta_{ch}\theta_{wt}(dy_1\wedge\cdots\wedge dy_k\wedge dx_0\wedge\cdots\wedge dx_n)\\
&=-\theta_{ch}(\Omega_{\ud y}\wedge dx_0\wedge\cdots\wedge dx_n)\\
&=-(\theta_{ch}\Omega_{\ud y})\wedge dx_0\wedge\cdots\wedge dx_n+(-1)^k\Omega_{\ud y}\wedge\Omega_{\ud x}\\
&=(-1)^{k+n(k-1)}\Omega_{\ud x}\wedge\Omega_{\ud y}-(\theta_{ch}\Omega_{\ud y})\wedge dx_0\wedge\cdots\wedge dx_n
\end{align*}
By using the definition of $\varphi_S$ and $|\ud i|=i_1 + \cdots +i_k$, we get
\begin{align*}
\varphi_S(\ud y^{\ud i}u(\ud x))&=\left[(-1)^{|\ud i|+n(k-1)}(|\ud i|+k-1)!\frac{y^iu(x)}{S^{|\ud i|+k}}\Omega_{\ud x}\wedge\Omega_{\ud y}\right]\\
&\quad\quad+\left[(-1)^{|\ud i|+n(k-1)}(|\ud i|+k-1)!\frac{y^iu(x)}{S^{|\ud i|+k}}(\theta_{ch}\Omega_{\ud y})\wedge dx_0\wedge\cdots\wedge dx_n\right]
\end{align*}
The second term vanishes, since
\begin{align*}
s_{\ud G}^\ast\left(\left[\frac{\ud y^{\ud i}u(\ud x)}{S^{|\ud i|+k}}(\theta_{ch}\Omega_{\ud y})\wedge dx_0\wedge\cdots\wedge dx_n\right]\right)=0
\end{align*}
due to the term $dx_0\wedge\cdots\wedge dx_n$. Here
$$
 s_{\ud G}^*: H^{n+k-1}_{dR}(\BP(\cE) \setminus X_S; \bC) \xrightarrow{\simeq}  H^{n+k-1}_{dR}(\BP^n \setminus X_{\ud G}; \bC)
 $$
 is the isomorphism induced from the section $s_{\ud G}$.
Thus
\begin{align*}
\bar \varphi_S([\ud y^{\ud i}u(\ud x)])=\varphi_S(\ud y^{\ud i}u(\ud x))=\left[(-1)^{n(k-1)+|\ud i|}(|\ud i|+k-1)!\frac{y^iu(x)}{S^{|\ud i|+k}}\Omega_{\ud x}\wedge\Omega_{\ud y}\right]=[\a(S,\ud y^{\ud i}u(\ud x) )].
\end{align*}
\end{proof}

We consider the natural epimorphism which appeared in \cite[Proposition 10]{Dim95},
\[
\delta_{\ud G}: H^n_{dR}(\BP^n \setminus D;\bC) \to H^{n+k-1}_{dR}(\BP^n\setminus X_{\ud G};\bC).
\]
The following proposition due to Dimca plays a key role in our deformation theory.
\begin{prop}[the proposition 10, \cite{Dim95}] \label{dc}
We have the following equality
\[
\delta_{\ud G} ([\b(S, \ud y^{\ud i} u (\ud x)])=s_{\ud G}^* ([\a(S, \ud y^{\ud i} u(\ud x)]).
\]
\end{prop}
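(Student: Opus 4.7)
My plan is to verify this identity of cohomology classes in $H^{n+k-1}_{dR}(\BP^n\setminus X_{\ud G};\bC)$ by an explicit cochain-level computation, exploiting the given formulas for $\a$, $\b$, and the section $s_{\ud G}$, and realizing both sides through a common \v{C}ech--de Rham model.

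First, I would produce a concrete realization of $\delta_{\ud G}$. Since $X_{\ud G}=\bigcap_\ell\{G_\ell=0\}$ is contained in the normal-crossings divisor $D=\bigcup_\ell\{G_\ell=0\}$, the open cover $\{U_\ell=\BP^n\setminus\{G_\ell=0\}\}_{\ell=1}^k$ of $\BP^n\setminus X_{\ud G}$ has simultaneous intersection $\BP^n\setminus D$; the \v{C}ech--de Rham spectral sequence for this cover produces the required degree shift $n\mapsto n+k-1$, and I would take the iterated \v{C}ech coboundary as the model of $\delta_{\ud G}$. In this picture, $\delta_{\ud G}[\b(S,\ud y^{\ud i}u)]$ is represented by the iterated coboundary of a \v{C}ech $0$-cochain whose only nonzero component is $\b$ itself on $U_1\cap\cdots\cap U_k=\BP^n\setminus D$.

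Second, I would compute $s_{\ud G}^*[\a(S,\ud y^{\ud i}u(\ud x))]$ by direct substitution: $s_{\ud G}^*(y_\ell)=G_\ell^{e_\ell-1}\bar G_\ell^{e_\ell}$, $s_{\ud G}^*(S)=\sum_\ell G_\ell^{e_\ell}\bar G_\ell^{e_\ell}$, and $s_{\ud G}^*(\O_{\ud y})$ becomes an explicit polynomial combination of $dG_\ell$ and $d\bar G_\ell$. I would then apply Griffiths-style pole-reduction moves to re-express $s_{\ud G}^*\a$, which a priori has its pole supported on the hypersurface $\{s_{\ud G}^*S=0\}\supset X_{\ud G}$, as a cohomologous form whose pole divisor is controlled by the individual components $\{G_\ell=0\}$, putting it in exactly the shape of an iterated \v{C}ech coboundary associated to the cover $\{U_\ell\}$.

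The main obstacle I foresee is purely combinatorial: keeping track of signs, factorials, and exact-form corrections so that the \v{C}ech representative of $\delta_{\ud G}[\b]$ coincides on the nose with the reduced form of $s_{\ud G}^*[\a]$. The organizing heuristic is the formal Laplace identity
\[
\int_{0}^{\infty}\!\!\cdots\!\!\int_{0}^{\infty} \ud y^{\ud i}\,u(\ud x)\,e^{-S(\ud q)}\,dy_1\cdots dy_k=\frac{i_1!\cdots i_k!\,u(\ud x)}{G_1^{i_1+1}\cdots G_k^{i_k+1}},
\]
which, read symbolically, explains how the single factorial $(|\ud i|+k-1)!$ and the single denominator $S^{|\ud i|+k}$ appearing in $\a$ redistribute into the product $\prod_\ell i_\ell!$ and $\prod_\ell G_\ell^{i_\ell+1}$ appearing in $\b$ once one formally ``integrates out'' the $y$-variables; translating this symbolic identity into an exact cochain-level cohomology statement is precisely the content of Dimca's argument and the crux of the calculation.
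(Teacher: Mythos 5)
The paper never proves this statement: it is quoted, as stated, from \cite[Proposition 10]{Dim95}, so there is no internal argument to compare yours against, and your proposal has to stand on its own as a reconstruction of Dimca's proof. As it stands it is a plan rather than a proof. The step you yourself flag as ``the crux'' --- showing that, after all sign, factorial and exact-form corrections, the \v{C}ech--de Rham representative of $\delta_{\ud G}[\beta(S,\ud y^{\ud i}u)]$ coincides with a representative of $s_{\ud G}^*[\alpha(S,\ud y^{\ud i}u)]$ --- is exactly the content of the proposition, and it is nowhere carried out; the Laplace-transform identity (which the paper also invokes, only as motivation, in the remark preceding the proposition) is a heuristic and does not substitute for that computation. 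There is also a smaller imprecision in your model of $\delta_{\ud G}$: a form living on the full intersection $U_1\cap\cdots\cap U_k=\BP^n\setminus D$ sits in \v{C}ech degree $k-1$, not degree $0$, and one does not apply an ``iterated coboundary'' to it; rather one runs the partition-of-unity zig-zag to a global form, and one must still verify that this edge map of the Mayer--Vietoris spectral sequence is Dimca's $\delta_{\ud G}$.

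More seriously, one step as described would fail. The section $s_{\ud G}(\ud x)=(\ud x,\,G_1^{e_1-1}\bar G_1^{e_1},\dots,G_k^{e_k-1}\bar G_k^{e_k})$ is only a $C^\infty$ section, not a holomorphic one, so $s_{\ud G}^*\alpha$ is a smooth closed $(n+k-1)$-form whose ``denominator'' is the real-analytic function $\sum_\ell |G_\ell|^{2e_\ell}$ and which involves $d\bar G_\ell$'s; it is not a rational form with poles along a divisor, and Griffiths-type pole reduction --- an algebraic de Rham manipulation of rational forms --- cannot be applied to it. A workable repair is to avoid pulling back by $s_{\ud G}$ altogether: since $s_{\ud G}^*$ inverts $pr_1^*$ on cohomology, the claimed identity is equivalent to $pr_1^*\,\delta_{\ud G}[\beta(S,\ud y^{\ud i}u)]=[\alpha(S,\ud y^{\ud i}u)]$ in $H^{n+k-1}_{dR}(\BP(\cE)\setminus X_S;\bC)$, where both sides are algebraic and can be compared in the \v{C}ech--de Rham complex of the cover $\{pr_1^{-1}(U_\ell)\}$; there the redistribution of $(|\ud i|+k-1)!/S^{|\ud i|+k}$ into $\prod_\ell i_\ell!/\prod_\ell G_\ell^{i_\ell+1}$ becomes an honest partial-fraction/coboundary computation, which is the part that still has to be done.
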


\begin{lem} \label{alter}
The isomorphism $\bar J_{\ud G}:H_K^0(\cA) \to H^{n-k}_{dR,\pr}(X,\bC)$ is explicitly given as follows:
\[
\bar J_{\ud G}([\ud y^{\ud i}u(\ud x)] )= \Res_{\ud G}\left(\delta_{\ud G} ([\b(S, \ud y^{\ud i} u (\ud x)])\right),  \quad \ud y^{\ud i} u(\ud x) \in \cA_{c_{\ud G}}^0,
\]
where $[\cdot]$ means the cohomology class.
\end{lem}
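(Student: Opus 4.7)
The plan is to simply chain together the explicit description of $\bar J_{\ud G}$ given just before the lemma with Propositions \ref{pone} and \ref{dc}. Recall that by construction
\[
\bar J_{\ud G} = \Res_{\ud G}\circ s_{\ud G}^*\circ \bar\varphi_S,
\]
so I would first apply Proposition \ref{pone} to rewrite $\bar\varphi_S([\ud y^{\ud i}u(\ud x)])$ as the class $[\a(S,\ud y^{\ud i}u(\ud x))]\in H^{n+k-1}_{dR}(\BP(\cE)\setminus X_S;\bC)$. Next I would push this class to $H^{n+k-1}_{dR}(\BP^n\setminus X_{\ud G};\bC)$ via the section isomorphism $s_{\ud G}^*$ and invoke Proposition \ref{dc} (Dimca's identity) to replace $s_{\ud G}^*([\a(S,\ud y^{\ud i}u(\ud x))])$ with $\delta_{\ud G}([\b(S,\ud y^{\ud i}u(\ud x))])$. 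Applying $\Res_{\ud G}$ to both sides yields exactly the claimed formula.

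There is essentially no obstacle here beyond bookkeeping: the content of the lemma is already packaged in the two previous propositions, and the role of this statement is only to repackage $\bar J_{\ud G}$ in a form where the denominator $G_1^{i_1+1}\cdots G_k^{i_k+1}$ (rather than a power of $S$) appears. This rewriting is what will later match the Laplace transform identity \eqref{laplace} needed in the proof of Theorem \ref{firthm}. The only point worth being careful about is that Proposition \ref{pone} is stated for monomials of the form $\ud y^{\ud i}u(\ud x)$ with $u(\ud x)\in\bC[\ud x]$ and with total charge $c_{\ud G}$, and that the Dimca identity in Proposition \ref{dc} is stated in the same generality; by $\bC$-linearity (and since $\bar J_{\ud G}$ vanishes on charge $\ne c_{\ud G}$ by the corollary to Proposition \ref{chc}) this is enough to conclude for all $\ud y^{\ud i}u(\ud x)\in\cA^0_{c_{\ud G}}$.
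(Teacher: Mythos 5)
Your proposal is correct and follows exactly the paper's own argument: the proof there is precisely the chain $\bar J_{\ud G}=\Res_{\ud G}\circ s_{\ud G}^*\circ\bar\varphi_S$, followed by Proposition \ref{pone} to replace $\bar\varphi_S([\ud y^{\ud i}u(\ud x)])$ by $[\a(S,\ud y^{\ud i}u(\ud x))]$ and Proposition \ref{dc} to replace $s_{\ud G}^*([\a(\cdot)])$ by $\delta_{\ud G}([\b(\cdot)])$. Your extra remark on linearity and the charge restriction is harmless bookkeeping that the paper leaves implicit.
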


\begin{proof}
From \eqref{gma}, the linear map $J_{\ud G}$ is given by the composite of the following sequence of isomorphisms:
 {\small{
 \begin{eqnarray*}
\qquad \quad \bar J_{\ud G}: {A}/\cK_{\ud G}  \xrightarrow{\bar \varphi_S}   H^{n+k-1}_{dR}(\BP(\cE) \setminus X_S; \bC) \xrightarrow{s_{\ud G}^*}  H^{n+k-1}_{dR}(\BP^n \setminus X_{\ud G}; \bC)
\xrightarrow{\Res_{\ud G}} H^{n-k}_{dR,\pr}(X_{\ud G}; \bC).
\end{eqnarray*}
 }}
This implies that
\begin{align}
\begin{split}
\bar J_{\ud G}([\ud y^{\ud i}u(\ud x)] )&
=\Res_{\ud G}\left( s_{\ud G}^* ([ \a(S, \ud y^{\ud i} u(\ud x))] )\right)\quad (\text{by \eqref{result}}) \\
&=\Res_{\ud G}\left( \delta_{\ud G} ([\b(S, \ud y^{\ud i} u (\ud x))]) \right)\quad (\text{by Proposition \ref{dc}}).
\end{split}
\end{align}
\end{proof}
%and $(\tilde \cA, \tilde K)= (\tilde \cA_{\ud G}, \tilde K_{\ud G})$ for simplicity. 
Recall the set up: Let $X_{\ud U}\subset \BP^n$ be a smooth projective complete intersection which is deformed
from $X=X_{\ud G}$ by homogeneous polynomials $\ud H = (H_1(\ud x),$ $ \cdots, 
H_k(\ud x))$, i.e. $\ud U =(U_1(\ud x), $ $\cdots, U_k(\ud x))$ with $U_i(\ud x)=G_i(\ud x) + H_i(\ud x)$ for each $i =1, \cdots, k$ are the defining equations
for $X_{\ud U}$.
%Recall that $I'$ is the set of indices $i$ such that $H_i(\ud x) \neq 0$ and that $\ell$ is its size.
%We assume that the cardinality of $I$ is bigger than $\ell$, and use
%(a slight abuse of) notation $I'=\{\a_{i_1}, \cdots, \a_{i_\ell}\}  \subseteq \{\a_1, \cdots, \a_k \}$ in order to view $I'$ as a subset of $I$.
%We also assume the condition \eqref{aind}. 
%The following lemma is the key to prove Theorem \ref{tthm}.

\begin{proof}\textbf{Proof of Theorem \ref{firthm}}:
The Laplace transform tells us that 
$$
\left(  \int_{0}^\infty \cdots \int_{0}^\infty  \ud y^{\ud i} u(\ud x) e^{S(\ud q)} dy_1 \cdots dy_k  \right) \Omega_{\ud x} 
= \frac{(-1)^p i_1 ! \cdots i_k !  u(\ud x)  \Omega_{\ud x}}{ G_1^{i_1+1} \cdots G_k^{i_k+1}}
=-\beta(S, \ud y^{\ud i} u(\ud x))),
$$
where $p =i_1 +\cdots + i_k$. 
Thus, for $\ud y^{\ud i}  u(\ud x) \in \cA_{c_{\ud G}}^0$, we have
\begin{eqnarray*}
\cC^{\ud G}_{\g} (\ud y^{\ud i} u(\ud x))
=\int_{\g} J_{\ud G}([\ud y^{\ud i}u(\ud x)] )
=\int_{\g} \Res_{\ud G} (\delta_{\ud G} ( \beta(S, \ud y^{\ud i} u(\ud x))))
\end{eqnarray*}
by Definition \ref{dp} of $\cC^{\ud G}_{\g}$.
Therefore\footnote{This means that the polynomial realization of the period integral can be understood as Feynman path integral for a 0-dimensional quantum field theory with action functional $S(\ud q)$.}
$$
\cC^{\ud G}_{\g} (f) =- \int_{\g} \Res_{\ud G} \circ \delta_{\ud G} \left(  \int_{0}^\infty \cdots \int_{0}^\infty  f e^{S(\ud q)} dy_1 \cdots dy_k \Omega_{\ud x}  \right), \quad f \in \cA_{c_{\ud G}}^0=A_{c_{\ud G}}.
$$
%Therefore 
%\be
% \cC_{\g}^{X_{\ud U}} ( \ud y^{\ud i} u(\ud x) ) &=& \int_{\g^{\ud U}} Res_{X_{\ud U}} \left( \delta_{X_{\ud U}} (\b(S+\sum_{i=1}^k y_i H_i(\ud x), 
% \ud y^{\ud i} u(\ud x)))  \right),\\
% &=&
% \int_\g  Res_{X_{\ud U}} \left(  \delta_{X_{\ud U}}  \left( \int_{0}^\infty \cdots \int_{0}^\infty  \ud y^{\ud i} u(\ud x) e^{S(\ud q)+\sum_{i=1}^k y_i H_i(\ud x)} dy_1 \cdots dy_k  \right) \Omega_{\ud x}  \right) 
%  \\
% &=& \int_\g Res_X \left(  \delta_X\left( \b( S, \ud y^{\ud i} u(\ud x)) \cdot e^{\sum_{i=1}^k y_i H_i(\ud x)}\right) \right)\\
% &=&\cC_\g^X(\ud y^{\ud i} u(\ud x) e^{\sum_{i=1}^k y_i H_i(\ud x)}).
%\ee
Recall that $S=\sum_{i=1}^k y_i G_i(\ud x)$ and $\G=\sum_{i=1}^k y_i H_i(\ud x)$. For $\ud y^{\ud i} u(\ud x) \in A_{c_{\ud G}}$, 
\begin{align*}
\cC_{\g}^{\ud U} (\ud y^{\ud i} u(\ud x)) 
& = \int_{\g} \Res_{\ud U} \left( \delta_{\ud U} (\b(S+\G, \ud y^{\ud i} u(\ud x)))  \right)\\
&= -\int_{\g}  \Res_{\ud U} \left( \delta_{\ud U} \left(\Omega_{\ud x}  \int_{0}^\infty \cdots \int_{0}^\infty  \ud y^{\ud i} u(\ud x) e^{S+\G} dy_1 \cdots dy_k   \right)    \right)   
\end{align*}
Hence
\begin{eqnarray*}
 \cC_{\g}^{\ud G} \left( \ud y^{\ud i} u(\ud x) e^\Gamma\right) 
 &=& \int_{\g} \Res_{\ud G} \left( \delta_{\ud G} (\b(S, e^\Gamma \cdot\ud y^{\ud i} u(\ud x)))  \right)\\
 &=& -\sum_{m=0}^{\infty} \int_{\g}  \Res_{\ud G} \left( \delta_{\ud G} \left(\Omega_{\ud x}  \int_{0}^\infty \cdots \int_{0}^\infty  \frac{\Gamma^m}{m!} \cdot y^{\ud i} u(\ud x)  e^{S} dy_1 \cdots dy_k   \right)    \right)  
%&= \int_{\g}\Res_{\ud G} \left(  \delta_{\ud G}  \left( \frac{(-1)^p i_1 ! \cdots i_k !  u(\ud x)   \Omega_{\ud x}}{ G_1^{i_1+1} \cdots G_k^{i_k+1}} \cdot e^\G \right)  \right) \quad(\text{by the Laplace transform}) \\
\end{eqnarray*}
converges to $\cC_{\g}^{\ud U} (\ud y^{\ud i} u(\ud x))$.
%
% For $u \in A_{c_{\ud G}}$, 
%\begin{align*}
%\cC_{\g}^{\ud U} (u) 
%& = \int_{\g} \Res_{\ud U} \left( \delta_{\ud U} (\b(S+\G, u))  \right)\\
%&= \int_{\g}  \Res_{\ud U} \left( \delta_{\ud U} \left(\Omega_{\ud x}  \int_{0}^\infty \cdots \int_{0}^\infty  u e^{S+\G} dy_1 \cdots dy_k   \right)    \right)   
%\end{align*}
%
%\begin{align*}
%&= \int_{\g}  \Res_{\ud G} \left( \delta_{\ud G} \left(\Omega_{\ud x}  \int_{0}^\infty \cdots \int_{0}^\infty  \ud e^\Gamma u  e^{S} dy_1 \cdots dy_k   \right)    \right)   \\
%& = \int_{\g} \Res_{\ud G} \left( \delta_{\ud G} (\b(S, e^\Gamma \cdot u))  \right)\\
%%&= \int_{\g}\Res_{\ud G} \left(  \delta_{\ud G}  \left( \frac{(-1)^p i_1 ! \cdots i_k !  u(\ud x)   \Omega_{\ud x}}{ G_1^{i_1+1} \cdots G_k^{i_k+1}} \cdot e^\G \right)  \right) \quad(\text{by the Laplace transform}) \\
%&=  \cC_{\g}^{\ud G} \left( u \cdot e^\Gamma\right).
%\end{align*}
%%Moreover, for any $\g \in H_{n-k}(X,\bZ)_0$,\footnote{We may view $\g$ as an element of either $H_{n-k}(X, \bZ)_0$ or $H_{n-k}(X_{\ud U}, \bZ)_0$.} we have
%%\[
%% \cC_{\g}^{X_{\ud U}} ( \ud y^{\ud i} u(\ud x) )=\cC_\g^X(\ud y^{\ud i} u(\ud x) e^{\sum_{i=1}^k y_i H_i(\ud x)}),
%%\]
%%for $\ud y^{\ud i}u(\ud x) \in \cA_{c_{\ud G}}^0$.
\end{proof}

See Proposition \ref{klemma} for a modern deformation theoretic interpretation of Theorem \ref{firthm}.

Finally, we justify Remark \ref{fourrem} $(1)$ saying that $J_{\ud G}=\Res_{\ud G} \circ s_{\ud G}^* \circ \varphi_S$ is defined in the level of cochains. Since we already saw $\varphi_S$ and $s_{\ud G}^*$ are defined in the level of cochains, it remains to deal with the residue map $\Res_{\ud G}$. 

Let $T$ be a sufficiently small closed tubular neighborhood  of $X_{\ud G}(\bC)$ in $\BP^n(\bC)$.
Let $\partial T$ be the boundary of $T$. Let $\rho: T \to X_{\ud G}(\bC)$ be the (topological) projection map.\footnote{In general, the projection $\rho$ is not holomorphic; see $(i)$ on the page 96 and Remark 15 of \cite{Dim95}.} Then $\rho$ restricted to $\partial T$ is an $S^{2k-1}$-sphere bundle over $X_{\ud G}(\bC)$.
If $j_{\partial T}: \partial T \hookrightarrow \BP^n(\bC) \setminus X_{\ud G}(\bC)$ is the inclusion map, then the residue map 
$$
\Res_{\ud G}: \Omega^s (\BP^n\setminus X_{\ud G}) \to \Omega^{s-2k+1}(X_{\ud G}), \quad s=2k-1, \cdots, 2n,
$$
is defined by
\eqn\resdef{
\Res_{\ud G} := \rho_* \circ j_{\partial T}^*,
}
where $j_{\partial T}^*$ is the natural pull-back of differential forms and $\rho_*$ is integration along the fibers
of $\rho:\partial T \to X_{\ud G}(\bC)$. 
This residue map is well-defined on the level of differential forms; A. Dimca considered this phenomenon surprising (see page 96 of \cite{Dim95} for details).
This induces an isomorphism
$$
\Res_{\ud G}: H_{dR}^{n+k-1}(\BP^n\setminus X_{\ud G};\bC) \xrightarrow{\simeq} H_{dR, \pr}^{n-k}(X_{\ud G};\bC).
$$

\subsection{The descendant functor and a formal deformation theory} \label{sec3.3}

%Let $k$ be a field of characteristic zero. 
This subsection we will study formal deformations of the data $(\cB,\cdot,K)$ and $C$ where
\begin{quote}
(1) $(\cB,\cdot)$ is a $\mathbb{Z}$-graded super-commutative associative algebra algebra over a field $k$,\\
(2) $(\cB,K)$ is a cochain complex over $k$, and\\
(3) $C:(\cB,K)\rightarrow (k,0)$ is a $k$-linear cochain map, where $(k,0)$ is the cochain complex with zero differential.
\end{quote}
%The  category $\frC_k$ is defined such that objects are triples $(\cB,\hbox{}\cdot\hbox{}, K)$, where the pair $(\cB,\hbox{}\cdot\hbox{})$ is a $\Z$-graded super-commutative associative $k$-algebra  while the pair $(\cB, K)$ is a cochain complex over $k$, and morphisms are cochain maps.
%Two of the salient properties of the category $\frC_k$ are that
% 
%(i) morphisms are not required to be algebra homomorphisms, 
%
%(ii) the differential and multiplication in an object have no compatibility condition.
%
Let $\mathfrak{C}_k$ be the category of triples $(\cB,\cdot,K)$ satisfying (1) and (2) above with ($k$-linear) cochain maps.
The deformation theory of objects in $\mathfrak{C}_k$ was studied in \cite{PP16}. We briefly review it here.

\begin{defn}\label{partition}
A \emph{partition} $\pi= B_1 \cup B_2\cup \cdots$ of the set $[n]=\{1,2, \cdots, n\}$ is a decomposition of $[n]$ into a pairwise disjoint non-empty subsets $B_i$, called \emph{blocks}. Blocks are ordered by the minimum element of each block and each block is ordered by the ordering induced from the ordering of natural numbers. The notation $|\pi|$ means the number of blocks in a partition $\pi$ and $|B|$ means the size of the block $B$. If $k$ and $k'$ belong to the same block in $\pi$, then we use
the notation $k \sim_\pi k'$. Otherwise, we use $k \nsim_\pi k'$. Let $P(n)$ be the set of all partitions of $[n]$. We use the following notation:
\be
x_B&=& x_{j_1} \otimes \cdots \otimes x_{j_{r}} \text{ if }  B=\{j_1, \cdots, j_{r}\},\\
\phi^f(x_{B})&=&\phi^f_r(x_{j_1}, \cdots, x_{j_r}) \text{ if } B=\{j_1, \cdots, j_r\}.
\ee
\end{defn}

\bed \label{defdesc}
For a given object $(\cB, \cdot, K)$ in $\frC_k$, we define $\Des\left(\cB, \cdot, K\right) = (\cB, \underline \ell^K)$, where $\underline \ell^K=\ell_1^K, \ell_2^K, \cdots$ is the family of linear maps $\ell_n^K: S^n(\cB) \to \cB$, inductively defined by the formula: $\ell_1^{K}=K$ and
\be
&&\ell_n^{K}(x_1, \cdots, x_{n-1}, x_n)=  \ell_{n-1}^{K}(x_1,\cdots, x_{n-2}, x_{n-1}\cdot x_n)\\
&&
-\ell_{n-1}^{K}(x_1, \cdots, x_{n-1}) \cdot x_{n} 
-(-1)^{|x_{n-1}|(1+|x_1|+\cdots + |x_{n-2}|)}  x_{n-1}\cdot \ell_{n-1}^{K}(x_1, \cdots, x_{n-2}, x_n), \quad n \geq 2,
\ee
for any homogeneous elements $x_1, x_2, \cdots, x_n \in \cB$.

 For a given morphism $f: (\cB, \cdot, K) \to (\cB', \cdot, K')$ in $\frC_k$, we define $\Des(f)= \underline \phi^f=\phi_1^f, \phi_2^f, \cdots $ as a family of $k$-linear maps $\phi_n^f: S^n(\cB) \to \cB'$ defined inductively by the formula: $\phi_1^{f}= {f}$ and
 \be
\phi_m^{f}(x_1, \cdots, x_m) = \phi_{m-1}^{f} (x_1, \cdots, x_{m-2}, x_{m-1}\cdot x_m) -
\sum_{\substack{\pi \in P(m), |\pi|=2 \\ m-1 \nsim_\pi m}} \phi^{f} (x_{B_1})\cdot \phi^{f}(x_{B_2}), \quad m \geq 2,
\ee
 for any homogeneous elements $x_1, x_2, \cdots, x_m \in \cB$. 
  Here we use the following notation:
\be
x_B&=& x_{j_1} \otimes \cdots \otimes x_{j_{r}} \text{ if }  B=\{j_1, \cdots, j_{r}\},\\
\phi^f(x_{B})&=&\phi^f_r(x_{j_1}, \cdots, x_{j_r}) \text{ if } B=\{j_1, \cdots, j_r\}.
\ee
\eed

\begin{prop}
Let $\mL_k$ be the category of $L_\infty$-algebras. The above assignment $\Des$ is a functor from $\frC_k$ to $\mL_k$.
\end{prop}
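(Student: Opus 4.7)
The plan is to verify the three conditions required of $\Des$ being a functor to $\mL_k$: namely, that (i) $\Des(\cB,\cdot,K) = (\cB, \ud\ell^K)$ satisfies the generalized Jacobi identities of an $L_\infty$-algebra; (ii) for every morphism $f$ in $\frC_k$, $\Des(f) = \ud\phi^f$ satisfies the $L_\infty$-morphism identities relating the source brackets $\ud\ell^K$ and the target brackets $\ud\ell^{K'}$; and (iii) $\Des$ preserves identity morphisms and composition.

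For (i), I would proceed by induction on $n$. The base case $n=1$ is $K\circ K=0$, which is built into the definition of an object in $\frC_k$. For the inductive step, I would use the recursive formula for $\ell_n^K$ in Definition \ref{defdesc} to rewrite the $n$-th Jacobi relation in terms of $\ell_{n-1}^K$ applied to products, and then invoke the inductive hypothesis together with the graded commutativity and associativity of the product. The content here is that the recursion is \emph{designed} so that each $\ell_n^K$ measures, in a homotopy-coherent fashion, the failure of $K$ to be a derivation of $\cdot$; this is the descendant construction studied in Sections 2.2--2.3 of \cite{PP16}, where the detailed verification (including signs) was carried out.

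For (ii), the argument runs in parallel. The base case $n=1$ is the chain-map identity $K'\circ f=f\circ K$, which is the defining property of a morphism in $\frC_k$. For the inductive step, the recursive formula in Definition \ref{defdesc} expresses $\phi_n^f$ in terms of $\phi_{n-1}^f$ and products of lower $\phi_r^f$'s, and one checks that the $n$-th $L_\infty$-morphism relation reduces to the $(n-1)$-th relation plus manipulations inside the target graded commutative algebra. Conceptually, each $\phi_n^f$ records the obstruction to $f$ being an algebra homomorphism, packaged to be compatible with the brackets $\ud\ell^K$ and $\ud\ell^{K'}$ produced by the object-level assignment.

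For (iii), the identity case is immediate: from $\phi_1^{\mathrm{id}}=\mathrm{id}$ and the recursion one sees by induction that $\phi_n^{\mathrm{id}}=0$ for all $n\geq 2$, yielding the identity $L_\infty$-morphism. The composition case $\Des(g\circ f)=\Des(g)\circ_{L_\infty}\Des(f)$ is the main obstacle: it requires matching the sum over partitions built into the composition law for $L_\infty$-morphisms against the nested recursion defining $\phi_n^{g\circ f}$. Again this reduces to the corresponding statement in \cite{PP16} and is proven by induction on $n$, carefully tracking how the block structure appearing on the right of the composition rule assembles, block by block, into the recursive formula for $\phi_n^{g\circ f}$. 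Throughout, the principal technical difficulty is sign and combinatorial bookkeeping, which is why we rely on the systematic treatment in \cite{PP16} rather than reproducing the verification here.
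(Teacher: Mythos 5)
Your proposal is correct and follows essentially the same route as the paper, which simply defers the verification of the $L_\infty$-identities, the morphism relations, and functoriality to subsection 3.2 of \cite{PP16}. Your added sketch of the inductions (including $\phi_n^{\mathrm{id}}=0$ for $n\geq 2$ and the partition bookkeeping for composition) is consistent with that treatment.
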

\begin{proof}
See subsection 3.2 in \cite{PP16}.
\end{proof}

We will call $\ud \ell^K$ and $\ud \phi^f$ a \textit{descendant} $L_\infty$-algebra and $L_\infty$-morphism respectively.
In short, the functor $\Des:\frC_k \to \mL_{k}$ takes

(i)
an object $(\cB,\hbox{}\cdot\hbox{}, K)$ in $\frC_k$ to a descendant $L_\infty$-algebra
$(\cB,\underline{\ell}^{K}=\ell_1^K,\ell_2^K, \ell_3^K,\cdots)$, where $\ell_1^K=K$ and $\ell^K_2, \ell_3^K,\cdots$
measure the failure and higher failures of $K$ being a derivation of the multiplication in $\cB$,

(ii) a morphism $f$ in $\mC_k$ to a descendant $L_\infty$-morphism $\underline{\phi}^f=\phi_1^f, \phi_2^f,\phi_3^f,\cdots$, where $\phi_1^f=f$ and $\phi_2^f,\phi_3^f,\cdots$ measure
the failure and higher failures of $f$ being an algebra homomorphism.

\begin{rem}
The functor $\Des$ can be viewed as a generalization of the BV construction \eqref{elltwo} for a differential operator $L$ of order $\leq 2$ to a differential operator $K$ of any finite order. Note that the $L_\infty$-algebra $(\cB, K, 0, 0, \cdots )$ is isomorphic to $\Des(\cB, \cdot, K)= (\cB, \ell_1^K=K, \ell_2^K, \ell_3^K, \cdots ) $ as $L_\infty$-algebras.
An important point for our deformation theory is to choose a \textit{nice} representative, given by the descendant $L_\infty$-algebra, in the $L_\infty$-isomorphism class of $(\cB, K, 0, 0, \cdots )$.
\end{rem}

\vspace{1em}
For $\mathfrak{a}\in\art_k$ denote $\mathfrak{m_a}$ its maximal ideal. In what follows we endow $\mathfrak{a}\otimes\cB$ the natural $\mathbb{Z}$-grading.
%dohyeong: previously we used A instead of \mathfrak{a}, so we are a little inconsistent here
We need the following lemmas to see a precise relationship between a formal deformation theory for $\frC_k$ (based on the Maurer-Cartan equation) and the descendant functor $\Des$.
\begin{lem} \label{proofofse}
For $\Gamma\in(\mathfrak{m_a}\otimes\cB)^0$ and homogeneous $\lambda\in\mathfrak{a}\otimes\cB$, denote
\begin{align*}
L^K(\Gamma):=\sum_{n\geq1}\frac{1}{n!}\ell^K_n(\Gamma,\cdots,\Gamma),\quad L^K_\Gamma(\lambda):=K\lambda+\sum_{n\geq2}\frac{1}{(n-1)!}\ell^K_n(\Gamma,\cdots,\Gamma,\lambda).
\end{align*}
Then we have the following identities:
\begin{align*}
K(e^\Gamma-1)=L^K(\Gamma)\cdot e^\Gamma,\quad K(\lambda\cdot e^\Gamma)=L^K_\Gamma(\lambda)\cdot e^\Gamma+(-1)^{|\lambda|}\lambda\cdot K(e^\Gamma-1).
\end{align*}
\end{lem}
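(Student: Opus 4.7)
The plan is to derive both identities from a generalized Koszul--Leibniz (``hierarchy'') formula for $K$, which rewrites $K$ applied to a product of $m$ elements as a sum of external multiplications by descendant brackets $\ell_k^K$, and then to specialize to the case where all but one of the factors equals $\Gamma$.

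First, by induction on $m$ and using the defining recursion of $\ell_n^K$ in Definition \ref{defdesc} (as recorded in sections 2.2--2.3 of \cite{PP16}), I would establish the Koszul hierarchy identity
\begin{equation*}
K(x_1 x_2 \cdots x_m) \;=\; \sum_{\varnothing \neq S \subseteq \{1,\ldots,m\}} \epsilon(S)\, \ell_{|S|}^K(x_S)\cdot x_{S^c},
\end{equation*}
where $\epsilon(S)$ is the Koszul sign that arises from pulling the arguments indexed by $S$ to the front in graded order. The inductive step uses the defining recursion, rearranged as a rule that peels a single factor off a product inside $\ell_{n-1}^K(x_1,\ldots,x_{n-2},x_{n-1}x_n)$: the result is $\ell_n^K(x_1,\ldots,x_n)$ plus two boundary terms in which one factor is pulled outside, which is exactly what is needed to propagate the hierarchy formula from $m-1$ to $m$ factors.

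Second, for the first identity, I would specialize to $x_1=\cdots=x_m=\Gamma$. Because $|\Gamma|=0$, all Koszul signs are trivial, and subsets of size $k$ contribute a combined multiplicity $\binom{m}{k}$, so
\begin{equation*}
K(\Gamma^m) \;=\; \sum_{k=1}^{m}\binom{m}{k}\,\ell_k^K(\Gamma,\ldots,\Gamma)\,\Gamma^{m-k}.
\end{equation*}
Dividing by $m!$ and summing over $m\geq 1$, a Cauchy-product factorization yields
\begin{equation*}
K(e^\Gamma-1) \;=\; \Bigl(\sum_{k\geq 1}\tfrac{1}{k!}\ell_k^K(\Gamma,\ldots,\Gamma)\Bigr)\cdot\Bigl(\sum_{j\geq 0}\tfrac{\Gamma^j}{j!}\Bigr) \;=\; L^K(\Gamma)\cdot e^\Gamma.
\end{equation*}

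Third, for the second identity, I would apply the Koszul hierarchy formula to $\lambda\cdot\Gamma^m$ and partition the sum over subsets $S\subseteq\{0,1,\ldots,m\}$ (labelling $\lambda$ by $0$) into the case $0\in S$ (``$\lambda$ inside the bracket'') and $0\notin S$ (``$\lambda$ outside''). Since $|\Gamma|=0$ all internal signs vanish; the only surviving sign is $(-1)^{|\lambda|}$, arising when $\lambda$ is outside and has to be commuted past the degree-$1$ element $\ell_k^K(\Gamma,\ldots,\Gamma)$. Re-indexing and summing over $m\geq 0$ with weight $1/m!$ assembles the ``inside'' contribution into $L^K_\Gamma(\lambda)\cdot e^\Gamma$ and the ``outside'' contribution into $(-1)^{|\lambda|}\lambda\cdot L^K(\Gamma)\cdot e^\Gamma$; the latter equals $(-1)^{|\lambda|}\lambda\cdot K(e^\Gamma-1)$ by the first identity.

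The main obstacle is step one: verifying the Koszul hierarchy formula with correct Koszul signs by an induction that threads the defining recursion into itself. Once this is in hand, the two specializations are essentially sign-free because $|\Gamma|=0$ kills every interior sign, and the unique surviving sign $(-1)^{|\lambda|}$ is precisely the one displayed in the lemma.
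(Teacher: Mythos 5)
Your proof is correct. Note that the paper itself offers no argument for this lemma---the displayed ``proof'' is simply a citation to \cite[Lemma 3.1]{PP16}---so what you have done is reconstruct the proof behind the citation. The Koszul hierarchy identity
\[
K(x_1\cdots x_m)=\sum_{\varnothing\neq S\subseteq\{1,\dots,m\}}\epsilon(S)\,\ell_{|S|}^{K}(x_S)\cdot x_{S^c}
\]
(with $\epsilon(S)$ the shuffle Koszul sign) is precisely the right intermediate step, it does follow by the ``peeling'' induction you describe from the defining recursion of $\ell_n^K$, and I checked that your formula carries the correct signs through $m=3$. The two specializations are then handled exactly as you say: with $|\Gamma|=0$ every $\epsilon(S)$ is $+1$, the multiplicity of a size-$k$ subset is $\binom{m}{k}$, and the Cauchy product does the rest; and in the second identity, when $\lambda$ sits outside the bracket, the only sign comes from commuting $\lambda$ past the degree-one element $\ell_k^K(\Gamma,\dots,\Gamma)$ after the hierarchy formula has been applied (not from $\epsilon(S)$ itself, which is still $+1$)---you already flag this, and it would be worth making explicit in a write-up so a reader does not conflate it with a shuffle sign.
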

\begin{proof}
See \cite[Lemma 3.1]{PP16}.
\end{proof}

\begin{lem} \label{proofofsec}
For $\Gamma\in(\mathfrak{m_a}\otimes\cB)^0$ and homogeneous $\lambda\in\mathfrak{a}\otimes\cB$, denote
\begin{align*}
\Phi^f(\Gamma):=\sum_{n\geq1}\frac{1}{n!}\phi^f_n(\Gamma,\cdots,\Gamma),\quad \Phi^f_\Gamma(\lambda):=\phi^f_1(\lambda)+\sum_{n\geq2}\frac{1}{(n-1)!}\phi^f_n(\Gamma,\cdots,\Gamma,\lambda).
\end{align*}
Then we have the following identities:
\begin{align*}
f(e^\Gamma-1)=e^{\Phi^f(\Gamma)}-1,\quad f(\lambda\cdot e^\Gamma)=\Phi^f_\Gamma(\lambda)\cdot e^{\Phi^f(\Gamma)}.
\end{align*}
\end{lem}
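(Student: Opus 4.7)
My strategy is to reduce both identities to a single combinatorial ``moment--cumulant'' relation between $f$ and its descendants $\phi^f_n$, and then derive the two displayed formulas as formal-series consequences. First I will establish by induction on $n$ the identity
\begin{align*}
f(x_1 \cdot x_2 \cdots x_n) = \sum_{\pi \in P(n)} \prod_{B \in \pi} \phi^f(x_B)
\end{align*}
for any homogeneous $x_1, \ldots, x_n \in \cB$, where $\phi^f(x_B) = \phi^f_{|B|}(x_{j_1}, \ldots, x_{j_{|B|}})$ for $B = \{j_1 < \cdots < j_{|B|}\}$. The base case $n=1$ is just $\phi^f_1=f$. For the inductive step, I apply the inductive hypothesis to $y_1 = x_1, \ldots, y_{n-2} = x_{n-2}, y_{n-1} = x_{n-1}\cdot x_n$, and then on the block $B^\ast$ of each partition $\pi' \in P(n-1)$ containing the merged argument $y_{n-1}$ I invoke the recursion of Definition \ref{defdesc} in its reversed form $\phi^f_r(\ldots, x_{n-1}\cdot x_n) = \phi^f_{r+1}(\ldots, x_{n-1}, x_n) + \sum \phi^f(x_{C_1})\phi^f(x_{C_2})$. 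The partitions $\pi' \in P(n-1)$ biject with partitions $\pi \in P(n)$ having $n-1 \sim_\pi n$, while the extra 2-block splittings of $B^\ast \cup \{n\}$ separating $n-1$ from $n$ produce precisely the remaining partitions $\pi \in P(n)$ with $n-1 \nsim_\pi n$.

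With this moment--cumulant identity in hand, the first displayed formula becomes a direct consequence of the exponential formula, which in the notation of the paper is the defining relation \eqref{Bpoly} for the Bell polynomials $B_n(x_1, \ldots, x_n) = \sum_{\pi \in P(n)} \prod_{B \in \pi} x_{|B|}$. Substituting $x_1 = \cdots = x_n = \Gamma$ in the moment--cumulant identity gives $f(\Gamma^n) = B_n(\phi^f_1(\Gamma), \ldots, \phi^f_n(\Gamma, \ldots, \Gamma))$, and dividing by $n!$ and summing yields
\begin{align*}
f(e^\Gamma - 1) = \sum_{n \geq 1}\frac{B_n(\phi^f_1(\Gamma), \ldots, \phi^f_n(\Gamma, \ldots, \Gamma))}{n!} = \exp\!\left(\sum_{k \geq 1}\frac{\phi^f_k(\Gamma, \ldots, \Gamma)}{k!}\right) - 1 = e^{\Phi^f(\Gamma)} - 1.
\end{align*}
For the second formula, I substitute $x_1 = \lambda,\ x_2 = \cdots = x_{n+1} = \Gamma$ in the moment--cumulant identity: each partition $\pi \in P(n+1)$ has a distinguished block $B_\lambda \ni 1$ of some size $k+1$, contributing the factor $\phi^f_{k+1}(\lambda, \Gamma, \ldots, \Gamma)$ by the graded symmetry of $\phi^f_{k+1}$ on $S^{k+1}(\cB)$, while the residual partition of the other $n-k$ indices ranges over $P(n-k)$ with multiplicity $\binom{n}{k}$. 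After dividing by $n!$, reindexing $m = n-k$, and applying the Bell--polynomial identity a second time to the residual sum, the double sum factors as $\Phi^f_\Gamma(\lambda) \cdot e^{\Phi^f(\Gamma)}$.

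\textbf{Main obstacle.} The combinatorial bijection in the inductive step --- verifying that the ``merged-block-plus-2-split'' parameterization of the extra recursion terms indeed bijects with the partitions $\pi \in P(n)$ satisfying $n-1 \nsim_\pi n$ --- is the main bookkeeping hurdle and must be set up with care to avoid double-counting. A secondary concern is Koszul sign tracking when $\lambda$ is of odd degree; however, since $\Gamma$ has even degree (lying in $(\mathfrak{m_a} \otimes \cB)^0$) and $\lambda$ appears at most once in every argument list of $\phi^f$, the only signs that arise are those intrinsic to the symmetric structure on $S^n(\cB)$, and they cancel cleanly against the ordering conventions in $\lambda \cdot e^\Gamma$.
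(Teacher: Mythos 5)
Your proposal is correct: the inductive "moment--cumulant" identity $f(x_1\cdots x_n)=\sum_{\pi\in P(n)}\prod_{B\in\pi}\phi^f(x_B)$ does follow from the recursion in Definition \ref{defdesc} via the merged-block/2-split bijection you describe, and specializing to $x_i=\Gamma$ (resp.\ one $\lambda$ and the rest $\Gamma$) together with the exponential formula \eqref{Bpoly} yields both identities, with the sign bookkeeping harmless since $\Gamma$ is of even degree and $\lambda$ occurs only once. The paper itself gives no argument here --- it simply defers to \cite[Lemma 3.3]{PP16} --- and your partition-formula proof is essentially a self-contained reconstruction of the argument in that reference, so there is nothing further to reconcile.
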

\begin{proof}
See \cite[Lemma 3.3]{PP16}.
\end{proof}

We constructed the GBV algebra $(\cA, \cdot, K_{\ud G}, \ell_2^{K_{\ud G}})$ from $X_{\ud G}$
and the cochain enhancement (the polynomial realization) $\cC_\g^{\ud G}: (\cA, K_{\ud G}) \to (\C, 0)$ of the period integral $C_{\g}^{\ud G}$ for a fixed homology cycle $[\g]\in H_{n-k}(X_{\ud G}(\bC),\bZ)_0$. The key idea for studying deformations of period integrals of $X_{\ud G}$ is to study the algebraic deformation of $(\cA, \cdot, K_{\ud G}, \ell_2^{K_{\ud G}})$ and $\cC_\g^{\ud G}$.

Note that $\G=\sum_{i=1}^k y_i H_i(\ud x) \in A$ (where $\ud H= (H_1(\ud x), \cdots, H_k(\ud x))$ is the deformation data introduced earlier) is a solution to the shifted Maurer-Cartan equation for the dgla $(\cA, K_{\ud G}, \ell_2^{K_{\ud G}})$:
\begin{eqnarray}\label{mce}
K_{\ud G}(e^\G-1):=K_{\ud G}(\G) + \frac{1}{2} \ell_2^{K_{\ud G}}(\G, \G)=0.
\end{eqnarray}
Define
\begin{eqnarray} \label{kgamma}
K_\G(\lambda):=L_{\G}^{K_{\ud G}}(\lambda)= {K_{\ud G}}(\lambda) +\ell_2^{K_{\ud G}}\big(\G, \lambda \big),
\quad \lambda \in \cA_{}
\end{eqnarray}
where we use the computational fact $\ell_r^{K_{\ud G}}=0$ for $r\geq 3$ in the second equailty.
By Proposition \ref{proofofse},
the operator $K_\G$ is a $\C$-linear map on $\cA_{}$ of degree 1 and satisfies
\be
K_\G^2=0.
\ee
Also define
$$
Q_\G(\cdot):=\ell_2^{K_{\ud G}} (S+\G,\cdot  ), \quad \text{ where } S+\G= \sum_{i=1}^k y_i (G_i(\ud x) + H_i(\ud x)).
$$

A direct computation implies the following proposition (recall that $\ud U= \ud G + \ud H$).
\begin{prop}
The quadruple $(\cA, \cdot, Q_\G, K_\G, \ell_2^{K_\G})$ is the DGBV algebra $(\cA, \cdot, Q_{\ud U}, K_{\ud  U}, \ell_2^{K_{\ud U}})$ associated to the smooth projective complete intersection $X_{\ud U}$.
\end{prop}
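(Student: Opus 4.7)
The plan is to verify the identification of all four pieces of data by direct computation in the polynomial algebra $\cA = \bC[\ud q][\ud \eta]$, using that $S+\Gamma = \sum_{i=1}^k y_i U_i(\ud x)$ is precisely the Dwork potential for $X_{\ud U}$ (since $U_i = G_i + H_i$). Thus, by the very definitions given in Section \ref{sec2}, the DGBV data attached to $X_{\ud U}$ are
\[
Q_{\ud U} = \sum_{j=1}^N \frac{\partial (S+\Gamma)}{\partial q_j}\frac{\partial}{\partial \eta_j},\qquad K_{\ud U} = Q_{\ud U}+\Delta,
\]
with $\ell_2^{K_{\ud U}}$ determined by $K_{\ud U}$ and the multiplication on $\cA$. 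So everything reduces to computing the two operators $Q_\Gamma$ and $K_\Gamma$ explicitly.

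The key computation is a formula for $\ell_2^{K_{\ud G}}(h,f)$ when $h \in A = \cA^0$ (no $\eta$'s, cohomological degree zero). First I would observe that $K_{\ud G}(h) = 0$ for any $h \in A$: both $Q_{\ud G}$ and $\Delta$ contain a $\partial/\partial\eta_j$, which annihilates $h$. Hence by Definition \ref{bvd}(b),
\[
\ell_2^{K_{\ud G}}(h,f) = K_{\ud G}(h\cdot f) - h\cdot K_{\ud G}(f).
\]
Next, since $h$ has no $\eta$'s, $\partial/\partial\eta_j(h\cdot f) = h\cdot \partial f/\partial\eta_j$, so $Q_{\ud G}(h\cdot f) = h\cdot Q_{\ud G}(f)$, while the Leibniz rule for $\partial/\partial q_j$ yields
\[
\Delta(h\cdot f) = h\cdot\Delta(f) + \sum_{j=1}^N \frac{\partial h}{\partial q_j}\frac{\partial f}{\partial \eta_j}.
\]
Adding these gives
\[
\ell_2^{K_{\ud G}}(h,f) = \sum_{j=1}^N \frac{\partial h}{\partial q_j}\frac{\partial f}{\partial \eta_j}.
\]

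Applying this identity with $h = \Gamma$ and with $h = S+\Gamma$ yields immediately
\[
K_\Gamma = K_{\ud G} + \sum_{j=1}^N \frac{\partial\Gamma}{\partial q_j}\frac{\partial}{\partial\eta_j}
= \Delta + \sum_{j=1}^N \frac{\partial(S+\Gamma)}{\partial q_j}\frac{\partial}{\partial\eta_j} = K_{\ud U},
\]
and similarly $Q_\Gamma = \sum_j \partial(S+\Gamma)/\partial q_j \cdot \partial/\partial\eta_j = Q_{\ud U}$. Once the differentials coincide, $\ell_2^{K_\Gamma} = \ell_2^{K_{\ud U}}$ is automatic from formula \eqref{elltwo}, since both sides are defined using the same multiplication on $\cA$ and the same $K$.

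There is no real obstacle here: once one writes out $\ell_2^{K_{\ud G}}(\Gamma,-)$ honestly, the claim is an identity of differential operators on the polynomial algebra. The only points to be careful with are the sign conventions (which are trivial because $\Gamma$ and $S+\Gamma$ live in cohomological degree zero and have no $\eta$'s) and the use of Lemma \ref{proofofse} in identifying $K_\Gamma = L^{K_{\ud G}}_\Gamma$; the latter simplifies drastically because $\ell_r^{K_{\ud G}} = 0$ for $r\geq 3$, so only the $\ell_2^{K_{\ud G}}$-term contributes.
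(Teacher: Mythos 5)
Your proposal is correct and is essentially the paper's own argument: the paper merely asserts the proposition follows from "a direct computation," and your computation of $\ell_2^{K_{\ud G}}(h,f)=\sum_j \frac{\partial h}{\partial q_j}\frac{\partial f}{\partial \eta_j}$ for $h\in\cA^0$ is exactly that computation, correctly identifying $K_\Gamma=K_{\ud U}$ and $Q_\Gamma=Q_{\ud U}$ via $S+\Gamma=\sum_i y_iU_i$. No gaps; the observation that $\ell_2^{K_\Gamma}=\ell_2^{K_{\ud U}}$ is then automatic from \eqref{elltwo} is also right.
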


In short, if we like to deform the cochain complex $(\cA, K)$, then we consider it as an object of $\mL_k$ and find an $L_\infty$-algebra
$(\cA, K, \ell_2, \ell_3, \cdots)$ which is $L_\infty$-isomorphic to $(\cA, K, \ud 0)$ and  whose higher Lie bracket $\ell_m$ ($m\geq 2$)  is non-trivial. The functor $\Des$ is designed to give such an explicit $L_\infty$-algebra $(\cA, K=\ell_1^K, \ell_2^K, \cdots)$ using the auxiliary data, some nontrivial multiplication on $\cA$ (see Definition \ref{defdesc} for how we use this nontrivial multiplication). 
Then we obtain a nontrivial deformation $(\cA,K_\Gamma)$ of the cochain complex $(\cA,K)$ as above using the Maurer-Cartan solution $\G=\sum_{i=1}^k y_i H_i(\ud x)$.
 We can also deform a cochain map $\cC_\g^{\ud G}:(\cA, K_{\ud G}) \to (\bC,0)$ by using the Maurer-Cartan solution $\G$.

\bep \label{klemma}
%Let $\cC: (\cA, K) \to (\Bbbk, 0)$ be a cochain map. 
Let $\cC: (\cA, K_{\ud G}) \to (\bC,0)$ be any cochain map.
Let $\G =\sum_{i=1}^k y_i H_i(\ud x)\in A$ be the solution of the Maurer-Cartan equation \eqref{mce}. If we define\footnote{Here $e^\G=1 + \G + \frac{\G \cdot \G}{2} + \cdots $ and we think of
$\cC(x \cdot e^\G)=\cC(x) + \cC(x\cdot \G) + \cC(x\cdot \frac{\G \cdot \G}{2})+\cdots $ as a formal expression.}
\be
\cC_{\G} (x) := \cC\left( x \cdot e^\G\right), \quad x\in  \cA,
\ee
then $\cC_{\G}: (\cA, K_\G)=(\cA, K_{\ud U}) \to (\C,0)$ is a cochain map.
\eep

\begin{proof}
We have to check that $\cC_\G \circ K_\G=0$. This follows from
\begin{eqnarray*}
(\cC_{\G} \circ K_\G )(x) 
&=& \cC \left(K_\G(x) \cdot e^\G\right) \\
&=& \cC  \left(K_{\ud G} \left(x \cdot e^\G\right)\right)  \quad \text{(by Lemma \ref{proofofse})}\\
&=&0  
\end{eqnarray*}
for any element $x  \in \cA$.
\end{proof}

\begin{rem}
This proposition \ref{klemma} motivates the statement of Theorem \ref{firthm} and can be viewed as its modern deformation theoretic interpretation via the shifted differential graded Lie algebra $(\cA, K_{\ud G}, \ell_2^{K_{\ud G}})$ and the DGBV algebra $\bvx=(\cA, \cdot, Q_{\ud G}, K_{\ud  G}, \ell_2^{K_{\ud G}})$. Note that Proposition \ref{klemma} is not refined enough to prove Theorem \ref{firthm}.
\end{rem}

\subsection{The Bell polynomials and proof of Theorem \ref{sthm}}\label{sec3.4}

The \emph{(complete) Bell polynomials} $B_n(x_1,\cdots,x_n)$ are defined by the power series expansion
\begin{align}\label{Bpoly}
\exp\left(\sum_{i\geq1}x_i\frac{t^i}{i!}\right)=1+\sum_{n\geq1}B_n(x_1,\cdots,x_n)\frac{t^n}{n!}.
\end{align}
The \emph{partial Bell polynomials} $B_{n,k}(x_1,\cdots,x_{n-k+1})$ are defined by the power series expansion
\begin{align*}
\exp\left(u\sum_{i=1}^\infty x_i\frac{t^i}{i!}\right)=\sum_{n,k\geq0}B_{n,k}(x_1,\cdots,x_{n-k+1})\frac{t^n}{n!}u^k=1+\sum_{n\geq1}\frac{t^n}{n!}\left(\sum_{k=1}^nu^kB_{n,k}(x_1,\cdots,x_{n-k+1})\right)
\end{align*}
which gives the following formula:
\begin{align*}
B_{n,k}(x_1,\cdots,x_{n-k+1})=\sum\frac{n!}{i_1!\cdots i_{n-k+1}!}\left(\frac{x_1}{1!}\right)^{i_1}\cdots\left(\frac{x_{n-k+1}}{(n-k+1)!}\right)^{i_{n-k+1}}
\end{align*}
where the sum runs over all sequences $i_1,\cdots,i_{n-k+1}$ of non-negative integers such that
\begin{align*}
i_1+\cdots+i_{n-k+1}=k,\quad i_1+2i_2+\cdots+(n-k+1)i_{n-k+1}=n.
\end{align*}
Then the following formula holds:
\begin{align*}
B_n(x_1,\cdots,x_n)=\sum_{k=1}^nB_{n,k}(x_1,\cdots,x_{n-k+1}),\quad B_0=1.
\end{align*}
There are recurrence relations
\begin{align*}
B_{n+1}(x_1,\cdots,x_{n+1})=\sum_{i=0}^n\binom{n}{i}B_{n-i}(x_1,\cdots,x_{n-i})x_{i+1}
\end{align*}
and
\begin{align*}
B_{n,k}(x_1,\cdots,x_{n-k+1})=\sum_{i=1}^{n-k+1}\binom{n-1}{i-1}x_iB_{n-i,k-1}(x_1,\cdots,x_{n-i+k})
\end{align*}
where $B_{0,0}=1$, $B_{n,0}=0$ for $n\geq1$, and $B_{0,k}=0$ for $k\geq1$. For example, $B_1(x_1)=x_1, B_2(x_1, x_2)=x_1^2+x_2$, $B_3(x_1,x_2,x_3)=x_1^3+3x_1x_2+x_3, \cdots$.

\begin{proof} \textbf{Proof of Theorem \ref{sthm}}:
We have
%(with the simplified notation $\cC_\a=\cC_{\g^{\ud G}_\a}$)
\begin{align*}
\cC_{\g}^{\ud U}(u)
&= \cC_{\g}^{\ud G} \left(u \cdot e^\Gamma\right) \quad (\text{by Theorem \ref{firthm}})
\\
&= \Phi_\G^{\cC_\a} (u) \cdot  e^{\Phi^{\cC_\a}(\G)}\quad (\text{by Lemma \ref{proofofsec} and $\cC_\a= \cC_{\g}^{\ud G} $}).
%dohyeong: . -> ,
\end{align*}

In order to prove Theorem \ref{sthm}, we need to expand
$\cC_{\g}^{\ud U}(u)$ as
\begin{align*}
\cC_{\g}^{\ud U}(u)=\Psi_0(\Gamma)+\Psi_1(\Gamma)+\Psi_2(\Gamma)+\Psi_3(\Gamma)+\cdots,
\end{align*}
where $\Psi_n(c  \Gamma)=c^n\Psi_n(\Gamma)$ for $c\in\C$. 
By the definition of the Bell polynomials, we have
\begin{align*}
e^{\Phi^{\cC_\a}(\G)}
=\exp\left(\sum_{n\geq1}\frac{1}{n!}\phi_n^{\cC_\a}(\Gamma, \cdots, \Gamma)\right)=1+\sum_{n\geq1}\frac{1}{n!}B_n\left(\phi^{\cC_\a}_1(\Gamma),\cdots,\phi^{\cC_\a}_n(\Gamma,\cdots, \Gamma)\right).
\end{align*}
This expansion has an advantage that
\begin{align*}
B_n\left(\phi^{\cC_\a}_1(c\cdot\Gamma),\cdots,\phi^{\cC_\a}_n(c\cdot\Gamma,\cdots, c \cdot \Gamma)\right)=
c^n\cdot B_n\left(\phi^{\cC_\a}_1(\Gamma),\cdots,\phi^{\cC_\a}_n(\Gamma, \cdots, \Gamma)\right).
\end{align*}
for $c\in\C$. We also have
$$
\Phi^{\cC_\a}_\Gamma(u)=\phi^{\cC_\a}_1(u)+\sum_{n\geq2}\frac{1}{(n-1)!}\phi^{\cC_\a}_n(\Gamma,\cdots,\Gamma,u).
$$
Thus
\begin{eqnarray*}
\cC_{\g}^{\ud U}(u)&= &  \Phi_\G^{\cC_\a} (u) \cdot  e^{\Phi^{\cC_\a}(\G)}  \\
& = &
\cC_\a(u)+\sum_{m\geq1}\sum_{\substack{j+k=m \\ j,k\geq0}}\frac{1}{j!k!} B_j\left(\phi^{\cC_\a}_1(\Gamma),\cdots,\phi^{\cC_\a}_j(\Gamma, \cdots, \Gamma)\right)\cdot\phi^{\cC_\a}_{k+1}(\Gamma, \cdots, \Gamma,u)
\end{eqnarray*}
which finishes the proof.
\end{proof}

%where $\phi^{\cC_\a}_1(\Gamma,u):=\phi^{\cC_\a}_1(u)=\cC_\a(u)$.

\subsection{Deformation formula for the period matrices and proof of Theorem \ref{tthm}}\label{sec3.5}
Recall that $I'$ is the set of indices $i$ such that $H_i(\ud x) \neq 0$.
We assume that the cardinality of $I$ is bigger than $|I'|$ and 
view $I'$ as a subset of $I$ (allowing a slight abuse of notation). Let $\ell = |I'| \leq m$.
For each $i=1, \cdots, \ell$, we choose $h(\ud x) \in \bQ[\ud x]$ such that $\ch(y_i H_i(\ud x)h(x)) =c_{\ud G}$, when $c_{\ud G} >0$. When $c_{\ud G} <0$, we choose $j \in \{ 1, \cdots, k\}$ and a positive integer $m$ such that 
$$
m d_j = \min \{ m_i d_i : \deg( h(\ud x))=m_i d_i  + c_{\ud G} \geq 0, m_i \text{ is a positive integer}, i=1, \cdots, k \}
$$
 so that $\ch(y_i H_i(\ud x) y_j^{m} h(\ud x))=c_{\ud G} < 0.$
 Then we define
\begin{eqnarray}\label{ualpha}
u_{\a_i}:=
\begin{cases}
y_i H_i(\ud x), \quad \text { if } c_{\ud G}=0,
 \\
y_i H_i(\ud x)h(\ud x), \quad \text { if } c_{\ud G}>0
 \\
y_i H_i(\ud x) y_j^{m} h(\ud x), \quad \text { if } c_{\ud G}<0
 \end{cases}
\end{eqnarray}
 for $i =1, \cdots, \ell$.
Now we assume that
\begin{align}\label{aind}
\{u_{\a_i}  + \cK_{\ud U} : i =1, \cdots, \ell \} \text{ is $\Q$-linearly independent in } \bQ[\ud q]/\cK_{\ud U}.
\end{align}

\begin{lem}\label{cb}
There exists a subset $\{u_\a : \a \in I\} \subset A_{c_{\ud G}}$ which includes $\{u_{\a_i}: i =1, \cdots, \ell \} $  such that 
$$
\{ u_\a \mod K_{\ud U}(\cA^{-1}) \} \text{ is a $\bQ$-basis of } H_{K_{\ud U}}^0(\cA),
$$
where we use the notation $I'=\{\a_{i_1}, \cdots, \a_{i_\ell}\}  \subseteq I$ in order to view $I'$ as a subset of $I$.
\end{lem}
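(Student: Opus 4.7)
The strategy is a basis-extension argument in the finite-dimensional $\bQ$-vector space
\[
V_{\ud U}\; :=\; \bQ[\ud q]_{c_{\ud G}}\big/\bigl(\cK_{\ud U}\cap \bQ[\ud q]_{c_{\ud G}}\bigr).
\]
First I would verify that each $u_{\alpha_i}$ in \eqref{ualpha} actually lies in $A_{c_{\ud G}}$ (in fact in $\bQ[\ud q]_{c_{\ud G}}$): using $\ch(y_i)=-d_i$, $\ch(x_j)=1$, and the homogeneity of $H_i$ in degree $d_i$, a direct charge computation in each of the three cases $c_{\ud G}=0$, $c_{\ud G}>0$, $c_{\ud G}<0$ yields $\ch(u_{\alpha_i})=c_{\ud G}$; this is precisely why the extra factors $h(\ud x)$ and $y_j^{m}h(\ud x)$ were inserted in \eqref{ualpha}.

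Next I would invoke Proposition \ref{chc} applied to $X_{\ud U}$. Since $U_i = G_i + H_i$ is homogeneous of degree $d_i$, the background charge of $X_{\ud U}$ satisfies $c_{\ud U}=\sum_{\ell=1}^{k}d_\ell - n - 1 = c_{\ud G}$, so the cohomology $H(\cA, K_{\ud U})$ is concentrated in charge $c_{\ud G}$ and
\[
H^{0}_{K_{\ud U}}(\cA)\;\cong\;\cA^{0}_{c_{\ud G}}\big/\bigl(\cK_{\ud U}\cap \cA^{0}_{c_{\ud G}}\bigr).
\]
By Proposition \ref{mthm}\textit{(a)} applied to $X_{\ud U}$, this $\bC$-space is isomorphic to $H^{n-k}_{dR,\pr}(X_{\ud U};\bC)$, whose $\bC$-dimension equals $\dim_{\bC} H^{n-k}_{dR,\pr}(X_{\ud G};\bC) = |I|$ by Corollary \ref{CI-diffeo-type} (since $X_{\ud G}$ and $X_{\ud U}$ share the same multi-degree $(d_1,\dots,d_k)$). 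Combined with the $\bQ$-rational structure discussed just before the statement of Theorem \ref{tthm}, this gives $\dim_{\bQ} V_{\ud U} = |I|$.

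Finally, by assumption \eqref{aind} the $\ell := |I'|$ classes $\{u_{\alpha_i} + \cK_{\ud U}\}_{i=1}^{\ell}$ are $\bQ$-linearly independent in $V_{\ud U}$, so elementary linear algebra over $\bQ$ furnishes $|I|-\ell$ further representatives $u_\alpha \in \bQ[\ud q]_{c_{\ud G}} \subset A_{c_{\ud G}}$, indexed by $\alpha \in I \setminus I'$, that complete $\{u_{\alpha_i}\}$ to a $\bQ$-basis of $V_{\ud U}$ and hence, after $\otimes_{\bQ}\bC$, to a $\bQ$-rational basis of $H^{0}_{K_{\ud U}}(\cA)$ as required. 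The only mild obstacle is justifying that $\bQ[\ud q]$ really descends to a full $\bQ$-rational structure on $A/\cK_{\ud U}$; this follows from the explicit Griffiths--Dimca polynomial description underlying Proposition \ref{mthm} together with the rationality of the defining polynomials $G_i, H_i$, and is the same rationality already invoked to define $\cH$ and the period matrix $\Omega(X_{\ud G})$ in the first place.
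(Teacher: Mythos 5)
Your proposal is correct and takes essentially the same route as the paper: the paper's proof is simply the one-line observation that assumption \eqref{aind} lets one extend $\{u_{\a_i} \bmod K_{\ud U}(\cA^{-1})\}$ to a $\bQ$-basis of $H^0_{K_{\ud U}}(\cA)$. You supply the (implicitly assumed) background — the charge check that $u_{\a_i}\in A_{c_{\ud G}}$, the dimension count $\dim_\bQ V_{\ud U}=|I|$ via Propositions \ref{chc}, \ref{mthm}(a) and Corollary \ref{CI-diffeo-type}, and the $\bQ$-rationality of $\cK_{\ud U}$ — but the core linear-algebra step is identical.
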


\begin{proof}
Because of the condition \eqref{aind}, we can extend $\{u_{\a_{i}} \mod K_{\ud U}(\cA^{-1}) : \a_i \in I' \}$ to a $\bQ$-basis
$\{ u_\a \mod K_{\ud U}(\cA^{-1}) : \a \in I \}$ of $H_{K_{\ud U}}^0(\cA)$.
\end{proof}

In the definition \eqref{pma} of the period matrix , we chose $e_\a \in A_{c_{\ud G}}=\cA^0_{c_{\ud G}}$ such that 
$$
\{ e_\a \mod K_{\ud G}(\cA^{-1}) \} \text{ is a $\bC$-basis of } H_{K_{\ud G}}^0(\cA).\
$$
For each $\rho \in I$, we define a power series $T^\r (\ud t) \in \bC[[\ud t]]$ by the following formula 
\begin{eqnarray}\label{eaon}
\qquad \qquad
 \sum_{\r \in I} T^\r(\ud t) \cdot e_\r + K_{\ud G} (\Lambda(\ud t))=
\begin{cases}
e^{\sum_{\a\in I} t^{\a} u_\a}-1, \quad \text { if } c_{\ud G}=0,
 \\
\left(h(\ud x)+ \sum_{\b \in  I\setminus I'} t^\b u_\b  \right)e^{\sum_{i=1}^k t^{\a_i} y_i H_i(\ud x)}, \quad  \text { if } c_{\ud G} >0,
 \\
\left(y_j^{m}h(\ud x)+ \sum_{\b \in  I\setminus I'} t^\b u_\b  \right)e^{\sum_{i=1}^k t^{\a_i} y_i H_i(\ud x)}, \quad \text { if } c_{\ud G}<0,
 \end{cases}
\end{eqnarray}
for some $\Lambda(\ud t) \in \cA^{-1}[[\ud t]]$. 
Note that \eqref{eaon} uniquely determines $T^\r (\ud t)$. Note that 
$$
\sum_{i=1}^k t^{\a_i} y_i H_i(\ud x)=\sum_{i=1}^\ell t^{\a_i} y_i H_i(\ud x).
$$

\begin{proof} \textbf{Proof of Theorem \ref{tthm}:}

We first deal with the case $c_{\ud G}=0$. For each $\b \in I$, we take a partial derivative $\partial_\b$ of \eqref{eaon}:
$$
\partial_\b \left(e^{\sum_{\a} t^{\a} u_\a}-1
\right)=
\sum_{\r \in I} \left(  \partial_\b T^\r(\ud t) \right) \cdot e_\r + K_{\ud G} (\partial_\b\Lambda(\ud t)).
$$
Then we have that
$$
u_\b e^{\G}= \sum_{\r \in I} \left(  \partial_\b T^\r(\ud t) \big|_{\substack{t^\e=1, \e \in I'\\t^\e=0, \e \in I\setminus I'}}\right) \cdot e_\r + K_{\ud G} (\partial_\b\Lambda(\ud t)\big|_{\substack{t^\a=1, \a \in I'\\t^\a=0, \a \in  I\setminus I'}}),
$$
where $\G=\sum_{i=1}^k y_i H_i(\ud x)$.
Now we take $\cC_{\g_\a^{\ud G}}$ to obtain
$$
\cC_{\g_\a^{\ud G}} (u_\b e^{\G})
=
\sum_{\r \in I} \left(  \partial_\b T^\r(\ud t) \big|_{\substack{t^\e=1, \e \in I'\\t^\e=0, \e \in  I\setminus I'}}\right) \cdot \cC_{\g_\a^{\ud G}}  (e_\r ),
$$
since $\cC_{\g_\a^{\ud G}}  \circ K_{\ud G} =0$.
%By Theorem \ref{firthm},
%$$
%\sum_{\a \in I}{B^\a_\d}\cdot \cC_{\g^{\ud G}_\a} \left(u_\b e^\Gamma\right):=\sum_{n=0}^{\infty}\sum_{\a \in I}{B^\a_\d}\cdot \cC_{\g^{\ud G}_\a} \left(u_\b \frac{\Gamma^n}{n !}\right)=\cC_{\g^{\ud U}_\d} (u_\b).
%$$
Therefore,
\begin{eqnarray*}
\cC_{\g^{\ud U}_\d} (u_\b) &=& \sum_{\a \in I}{B^\a_\d}\cdot \cC_{\g^{\ud G}_\a} \left(u_\b e^\Gamma\right) \quad (\text{by Theorem \ref{firthm} and Remark \ref{ccc} $(c)$})
\\
&=&\sum_{\a \in I} \sum_{\r \in I} \left(  \partial_\b T^\r(\ud t) \big|_{\substack{t^\e=1, \e \in I'\\t^\e=0, \e \in  I\setminus I'}}\right) \cdot \cC_{\g_\a^{\ud G}}  (e_\r ) \cdot {B^\a_\d},
\end{eqnarray*}
which implies the theorem.

Now assume that $c_{\ud G} > 0$. For each $\b \in I$, we take a partial derivative $\partial_\b$ of \eqref{eaon}:
$$
\partial_\b \left( \big(h(\ud x)+ \sum_{\d \in  I\setminus I'} t^\d u_\d \big) 
e^{\sum_{i=1}^k t^{\a_i} y_i H_i(\ud x)}
\right)=
\sum_{\r \in I} \left(  \partial_\b T^\r(\ud t) \right) \cdot e_\r + K_{\ud G} (\partial_\b\Lambda(\ud t)).
$$
When $\b\in I \setminus I'$, the LHS becomes
$$
\left( u_{\b} e^{\sum_{i=1}^k t^{\a_i} y_i H_i(\ud x)}\right) \Big|_{\substack{t^\e=1, \e \in I'\\t^\e=0, \e \in  I\setminus I'}}= u_\b e^{\G}, \quad \G=\sum_{i=1}^k y_i H_i(\ud x).
$$
When $\b=\a_{i} \in I'$, the LHS becomes
$$
\left( \big(h(\ud x)+ \sum_{\d \in  I\setminus I'} t^\d u_\d \big) y_i H_i(\ud x) e^{\sum_{i=1}^k t^{\a_i} y_i H_i(\ud x)}\right) \Bigg|_{\substack{t^\e=1, \e \in I'\\t^\e=0, \e \in  I\setminus I'}}= u_\b e^{\G} \quad \text{by } \eqref{ualpha}.
$$
Therefore we get that
$$
u_\b e^{\G}= \sum_{\r \in I} \left(  \partial_\b T^\r(\ud t) \big|_{\substack{t^\e=1, \e \in I'\\t^\e=0, \e \in  I\setminus I'}}\right) \cdot e_\r + K_{\ud G} (\partial_\b\Lambda(\ud t)\big|_{\substack{t^\e=1, \e \in I'\\t^\e=0, \e \in  I\setminus I'}},
$$
for each $\b\in I$. Then the same argument as above gives the desired result
in the case of $c_{\ud G} >0$.
The case of $c_{\ud G} <0$ can be treated in a similar way.
\end{proof}

\appendix
\section{}\label{apa}
\begin{prop}\label{CI-ambient-isotopy} If $X_{\underline{G}}$ and $X_{\underline{U}}$ are smooth projective complete intersections in $\mathbf{P}^n$ defined by homogeneous polynomials $\underline{G}=(G_1(\underline{x}),\cdots,G_k(\underline{x}))$ and $\underline{U}=(U_1(\underline{x}),\cdots,U_k(\underline{x}))$ of degree $(d_1,\cdots,d_k)$ respectively, then there is an ambient isotopy\footnote{Given two smooth embeddings $\alpha,\beta:L\hookrightarrow M$ of smooth manifolds, an \emph{ambient isotopy} from $\alpha$ to $\beta$ is a smooth map $F:[0,1]\times M\rightarrow M$ such that (1) $F(0,\cdot)=\mathrm{Id}_M$, (2) $F(t,\cdot):M\rightarrow M$ is a diffeomorphism for every $t\in[0,1]$, and (3) $F(1,\cdot)\circ\alpha=\beta$.} $F:[0,1]\times\mathbf{P}^n(\mathbb{C})\rightarrow\mathbf{P}^n(\mathbb{C})$ such that the induced diffeomorphism $F(1, \cdot):\mathbf{P}^n(\mathbb{C})\rightarrow\mathbf{P}^n(\mathbb{C})$ restricts to give a diffeomorphism from $X_{\underline{G}}(\mathbb{C})$ to $X_{\underline{U}}(\mathbb{C})$. Consequently, there is an isomorphism on homologies
\begin{align*}
\xymatrix{F(1,\cdot)_\ast:H_\bullet(X_{\underline{G}}(\bC),\mathbb{Z}) \ar[r]^-\sim & H_\bullet(X_{\underline{U}}(\bC),\mathbb{Z})}
\end{align*}
which fixes cycles supported in $X_{\underline{G}}(\mathbb{C})\cap X_{\underline{U}}(\mathbb{C})$.
\end{prop}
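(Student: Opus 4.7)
The plan is to realise $X_{\underline G}(\mathbb{C})$ and $X_{\underline U}(\mathbb{C})$ as two fibres of a smooth proper family inside $\mathbf{P}^n(\mathbb{C})$ over a contractible base, and then apply Ehresmann's fibration theorem in its ``tubular neighbourhood'' form to convert a trivialisation of the family into an ambient isotopy. First I would verify that the set $\mathcal{U}$ of $k$-tuples $\underline P=(P_1,\dots,P_k)$ of homogeneous polynomials of multidegree $(d_1,\dots,d_k)$ whose zero locus is a smooth complete intersection of pure codimension $k$ is a Zariski-open subset of the affine space $\bigoplus_i H^0(\mathbf{P}^n,\mathcal{O}(d_i))$, and in particular that it is path-connected in the classical topology. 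Choose a smooth arc $\gamma:[0,1]\to\mathcal{U}$ with $\gamma(0)=\underline G$ and $\gamma(1)=\underline U$, and form the incidence variety
\[
\mathcal{X}_\gamma := \{(t,x)\in[0,1]\times\mathbf{P}^n(\mathbb{C}) : x\in X_{\gamma(t)}(\mathbb{C})\}.
\]
A fibrewise Jacobian computation, together with the fact that $\gamma(t)\in\mathcal{U}$ for every $t$, shows that $\mathcal{X}_\gamma$ is a smooth submanifold of $[0,1]\times\mathbf{P}^n(\mathbb{C})$, and the projection $\pi:\mathcal{X}_\gamma\to[0,1]$ is a proper submersion because $\mathbf{P}^n(\mathbb{C})$ is compact.

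The second step is to construct a smooth vector field $V$ on $[0,1]\times\mathbf{P}^n(\mathbb{C})$ satisfying (i) $V$ projects to $\partial/\partial t$ under the first projection, and (ii) $V$ is everywhere tangent to $\mathcal{X}_\gamma$. Such a $V$ is produced by a standard partition-of-unity argument: locally around each point of $\mathcal{X}_\gamma$ one chooses adapted coordinates in which $\mathcal{X}_\gamma$ is a coordinate subspace, lifts $\partial/\partial t$ to a tangent vector to that subspace, and glues the local lifts, using $\partial/\partial t$ itself away from a neighbourhood of $\mathcal{X}_\gamma$. Compactness of $\mathbf{P}^n(\mathbb{C})$ guarantees that the flow of $V$ exists for all $t\in[0,1]$, producing the desired ambient isotopy $F:[0,1]\times\mathbf{P}^n(\mathbb{C})\to\mathbf{P}^n(\mathbb{C})$; condition (ii) then ensures that $F(t,\cdot)$ carries $X_{\underline G}(\mathbb{C})$ diffeomorphically onto $X_{\gamma(t)}(\mathbb{C})$ for every $t$, and in particular yields a diffeomorphism $F(1,\cdot)|_{X_{\underline G}(\mathbb{C})}:X_{\underline G}(\mathbb{C})\xrightarrow{\sim} X_{\underline U}(\mathbb{C})$.

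The main obstacle is the final assertion, that the induced isomorphism on homology fixes any class represented by a cycle supported in $Z:=X_{\underline G}(\mathbb{C})\cap X_{\underline U}(\mathbb{C})$. A clean sufficient condition is that $F$ fix $Z$ pointwise, which amounts to asking that the $\mathbf{P}^n$-component of $V$ vanish identically on $[0,1]\times Z$. To arrange this I would take $\gamma$ to be the linear interpolation $\gamma(t)=(1-t)\underline G+t\underline U$, for which each $\gamma_i(t)=(1-t)G_i+tU_i$ still vanishes on $Z$, so that $Z\subset X_{\gamma(t)}(\mathbb{C})$ for every $t$ and the vector $(\partial/\partial t,0)$ is automatically tangent to $\mathcal{X}_\gamma$ along $[0,1]\times Z$. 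One then strengthens the partition-of-unity construction to force $V=(\partial/\partial t,0)$ on a neighbourhood of $[0,1]\times Z$, yielding $F(t,\cdot)|_Z=\mathrm{id}_Z$ for all $t$. The delicate point is that the linear interpolation may pass through singular members of the family; when this occurs one must replace $\gamma$ by a generic smooth perturbation inside the affine subspace of $k$-tuples vanishing on $Z$, and a transversality argument using the Zariski-openness of the smooth locus together with the irreducibility (hence connectedness in the classical topology) of this affine subspace produces such a perturbation. Once $F$ fixes $Z$ pointwise, functoriality of singular homology makes the final claim immediate.
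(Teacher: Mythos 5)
Your core construction is the same as the paper's (which follows Kulkarni--Wood): connect $\underline{G}$ to $\underline{U}$ through smooth members, observe that the incidence variety is a proper submersive family over a contractible base, lift $\partial/\partial t$ to a vector field tangent to the family, extend it to the ambient $[0,1]\times\mathbf{P}^n(\mathbb{C})$ by a partition of unity, and integrate. Where you genuinely diverge is the final claim about cycles supported in $Z:=X_{\underline{G}}(\mathbb{C})\cap X_{\underline{U}}(\mathbb{C})$: you arrange the isotopy to fix $Z$ \emph{pointwise}, by keeping the whole path inside the tuples vanishing on $Z$ and prescribing the lift along $[0,1]\times Z$, whereas the paper works with the pencil $t_0\underline{G}+t_1\underline{U}$ (all of whose members automatically contain $Z$) and then deduces the homology statement from the identity $F(1,\cdot)\circ\iota_{\underline{G}}=\iota_{\underline{U}}$ together with the convention that a simplex in $Z$ factors through both embeddings. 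Your version is stronger and more robust, since without knowing that $F(1,\cdot)$ fixes $Z$ pointwise the paper's concluding manipulation is essentially an identification convention rather than a geometric fact. Note also that your perturbation step can be simplified exactly as in the paper: the pencil spanned by $\underline{G}$ and $\underline{U}$ already lies in the locus of tuples vanishing on $Z$, and after complexifying the parameter to $\mathbf{P}^1(\mathbb{C})$ the singular members form a finite set, so a smooth arc from $[1:0]$ to $[0:1]$ avoiding them does everything your ``generic perturbation in the affine subspace'' is meant to do.

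One concrete point needs repair: you cannot force $V=(\partial/\partial t,0)$ on an open \emph{neighbourhood} of $[0,1]\times Z$ while keeping $V$ tangent to $\mathcal{X}_\gamma$. At a point $(t_0,x)\in\mathcal{X}_\gamma$ with $x\notin Z$, tangency of $(\partial/\partial t,0)$ forces $\gamma_i'(t_0)(x)=0$ for all $i$ (for the linear pencil this means $(U_i-G_i)(x)=0$, hence $x\in Z$), so near $Z$ but off $[0,1]\times Z$ the vector $(\partial/\partial t,0)$ is in general not tangent to the family; with your choice the flow would push such points off $\mathcal{X}_\gamma$ and $F(1,\cdot)$ would no longer carry $X_{\underline{G}}(\mathbb{C})$ onto $X_{\underline{U}}(\mathbb{C})$ near $Z$. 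The fix is standard: impose $V=(\partial/\partial t,0)$ only \emph{along} $[0,1]\times Z$, where it is tangent because $Z$ lies in every fibre. Since at each point of $\mathcal{X}_\gamma$ the lifts of $\partial/\partial t$ tangent to $\mathcal{X}_\gamma$ form an affine subspace, a partition-of-unity gluing of local lifts chosen to agree with $(\partial/\partial t,0)$ at points of $[0,1]\times Z$ retains both properties; the integral curve through $(0,z)$ for $z\in Z$ is then $s\mapsto(s,z)$, so $F(t,\cdot)|_Z=\mathrm{id}_Z$ and the homology assertion follows by functoriality, as you say.
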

\begin{proof}
We follow the argument in \cite[section 4]{KulWo} which covers the hypersurface case. The polynomials
\begin{align*}
t_0\underline{G}+t_1\underline{U}=(t_0G_1+t_1U_1,\cdots,t_0G_k+t_1U_k)
\end{align*}
of bidegree $(1,d_1),\cdots,(1,d_k)$ define a complete intersection $W\subseteq\mathbf{P}^n(\mathbb{C})$. The set $C\subseteq\mathbf{P}^1(\mathbb{C})\times\mathbf{P}^n(\mathbb{C})$ of points such that $W\cap([z_0:z_1]\times\mathbf{P}^n(\mathbb{C}))$ is singular is a closed algebraic set. Let $\pi:\mathbf{P}^1(\mathbb{C})\times\mathbf{P}^n(\mathbb{C})\rightarrow\mathbf{P}^1(\mathbb{C})$ be the projection. Then $\pi(C)\subseteq\mathbf{P}^1(\mathbb{C})$ is also a closed algebraic set. Since $X_{\underline{G}}(\mathbb{C})$ and $X_{\underline{U}}(\mathbb{C})$ are nonsingular, $\pi(C)\neq\mathbf{P}^1(\mathbb{C})$ so $\pi(C)$ is a finite set of points. Let $I$ be a smooth arc in $\mathbf{P}^1(\mathbb{C})$ from $[1:0]$ to $[0:1]$ in the complement of $\pi(C)$. Then $\pi^{-1}(I)=I\times\mathbf{P}^n(\mathbb{C})$ contains the smooth submanifold $\pi^{-1}(I)\cap W$ of real codimension $2k$. By construction the projection $\pi$ is a proper submersion, where the properness follows from the compactness of $\pi^{-1}(I)\cap W\subseteq I\times\mathbf{P}^n(\mathbb{C})$. Since $I$ is contractible, there is a diffeomorphism $f$ over $I$:
\begin{align*}
\xymatrixcolsep{1pc}\xymatrix{
I\times X_{\underline{G}}(\mathbb{C}) \ar[dr]_-{\mathrm{pr}_I} \ar[rr]^-\sim_-f & & \pi^{-1}(I)\cap W \ar[dl]^-\pi \\
& I &
}
\end{align*}
given by Ehresmann's theorem (see \cite[Theorem 9.3]{Vois} for example). If we denote the vector field
\begin{align*}
\frac{\partial}{\partial t}\in\Gamma\left(I\times X_{\underline{G}}(\mathbb{C}),T(I\times X_{\underline{G}}(\mathbb{C}))\right)
\end{align*}
tangential to the first factor $I$, then the pushforward
\begin{align*}
\varphi_\ast\frac{\partial}{\partial t}\in\Gamma\left(\pi^{-1}(I)\cap W,T\left(\pi^{-1}(I)\cap W\right)\right)
\end{align*}
extends to a vector field $\tau$ on $I\times\mathbf{P}^n(\mathbb{C})$ (for example, extension by using the partition of unity argument on $I\times\mathbf{P}^n(\mathbb{C})$ see \cite[II. 3]{OW}). Then the integral flow of $\tau$ gives an ambient isotopy in $\mathbf{P}^n(\mathbb{C})$ taking $X_{\underline{G}}(\mathbb{C})$ to $X_{\underline{U}}(\mathbb{C})$ which restricts to a diffeomorphism from $X_{\underline{G}}(\mathbb{C})$ to $X_{\underline{U}}(\mathbb{C})$

Now, denote $F(t,\cdot):[0,1]\times\mathbf{P}^n(\mathbb{C})\rightarrow\mathbf{P}^n(\mathbb{C})$ an isotopy constructed as above and $M$ the common underlying smooth manifold of both $X_{\underline{G}}(\mathbb{C})$ and $X_{\underline{U}}(\mathbb{C})$. then the two subvarieties are realized as two embeddings
\begin{align*}
\xymatrix{
M \ar[d]_-{\mathrm{Id}_M} \ar[r]^-\sim_-{\iota_{\underline{G}}} & X_{\underline{G}}(\mathbb{C}) \ar[d]^-{F(1,\cdot)|_{X_{\underline{G}}}(\mathbb{C})} \ar@{^(->}[r]^-{\textrm{incl.}} & \mathbf{P}^n(\mathbb{C}) \ar[d]^-{F(1,\cdot)} \\
M \ar[r]^-\sim_-{\iota_{\underline{U}}} & X_{\underline{U}}(\mathbb{C}) \ar@{^(->}[r]^-{\textrm{incl.}} & \mathbf{P}^n(\mathbb{C})
}
\end{align*}
such that $F(1,\cdot)\circ\iota_{\underline{G}}=\iota_{\underline{U}}$. When we say that a singular simplex $\gamma$ belongs to $X_{\underline{G}}(\mathbb{C})\cap X_{\underline{U}}(\mathbb{C})$, this means that $\gamma:\Delta^r\rightarrow\mathbf{P}^n(\mathbb{C})$ factors through both $\iota_{\underline{G}}$ and $\iota_{\underline{U}}$, satisfying $\iota_{\underline{G}}\circ\gamma=\gamma=\iota_{\underline{U}}\circ\gamma$ so
\begin{align*}
F(1,\cdot)\circ\gamma=F(1,\cdot)\circ\iota_{\underline{G}}\circ\gamma=\iota_{\underline{U}}\circ\gamma=\gamma
\end{align*}
Therefore, the homology isomorphism induced by $F(1,\cdot)$ fixes the cycles supported in $X_{\underline{G}}(\mathbb{C})\cap X_{\underline{U}}(\mathbb{C})$.
\end{proof}
\begin{cor}\label{CI-diffeo-type} The diffeomorphism type of smooth projective complete intersection of codimension $k$ is uniquely determined by the degree $(d_1,\cdots,d_k)$ of defining homogeneous polynomials.
\end{cor}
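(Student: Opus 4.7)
The plan is to obtain Corollary~\ref{CI-diffeo-type} as an almost immediate consequence of Proposition~\ref{CI-ambient-isotopy}. Suppose $X$ and $X'$ are two smooth projective complete intersections of codimension $k$ defined by homogeneous polynomials of the same degree tuple $(d_1,\dots,d_k)$. Being of codimension $k$ means both live in the same ambient $\mathbf{P}^n(\mathbb{C})$ (with $n$ fixed by the dimension), so we may write $X=X_{\underline G}(\mathbb{C})$ and $X'=X_{\underline U}(\mathbb{C})$ for homogeneous polynomial tuples $\underline G=(G_1,\dots,G_k)$ and $\underline U=(U_1,\dots,U_k)$ with $\deg G_i=\deg U_i=d_i$.

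First, I would invoke Proposition~\ref{CI-ambient-isotopy} verbatim: it produces an ambient isotopy $F:[0,1]\times\mathbf{P}^n(\mathbb{C})\to\mathbf{P}^n(\mathbb{C})$ whose time-one map $F(1,\cdot):\mathbf{P}^n(\mathbb{C})\to\mathbf{P}^n(\mathbb{C})$ is a diffeomorphism of the ambient projective space that sends $X_{\underline G}(\mathbb{C})$ onto $X_{\underline U}(\mathbb{C})$. Restricting $F(1,\cdot)$ to $X_{\underline G}(\mathbb{C})$ therefore yields a diffeomorphism $F(1,\cdot)|_{X_{\underline G}(\mathbb{C})}:X_{\underline G}(\mathbb{C})\xrightarrow{\sim}X_{\underline U}(\mathbb{C})$. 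This shows that any two smooth projective complete intersections in $\mathbf{P}^n(\mathbb{C})$ with identical degree tuple are diffeomorphic, so their common diffeomorphism type depends only on $(d_1,\dots,d_k)$.

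There is essentially no obstacle here; the only subtle point is to observe that "codimension $k$" plus the fixed degree sequence already determines the ambient projective space $\mathbf{P}^n$, so the hypothesis of Proposition~\ref{CI-ambient-isotopy} is indeed applicable. The heavy lifting, namely the construction of a smooth path of nonsingular complete intersections in a pencil avoiding the discriminant locus $\pi(C)\subset\mathbf{P}^1(\mathbb{C})$, Ehresmann's fibration theorem over a contractible arc, and the extension of the horizontal vector field to the ambient $\mathbf{P}^n(\mathbb{C})$ via a partition of unity, has already been carried out in Proposition~\ref{CI-ambient-isotopy}, so nothing further is required.
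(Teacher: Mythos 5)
Your proposal is correct and matches the paper's argument, which simply states that the corollary is immediate from Proposition \ref{CI-ambient-isotopy}; restricting the time-one map of the ambient isotopy gives the desired diffeomorphism. Your added remark about the ambient $\mathbf{P}^n$ being fixed (by the common dimension together with codimension $k$) is a harmless clarification of what the paper leaves implicit.
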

\begin{proof}
It is immediate from Proposition \ref{CI-ambient-isotopy}.
\end{proof}

\end{document}